\newtheorem{thm}{Theorem}[section]
\newtheorem{lemma}[thm]{Lemma}
\newtheorem{prop}[thm]{Proposition}
\newtheorem{rmk}[thm]{Remark}
\theoremstyle{definition}
\numberwithin{equation}{section}
\newcommand{\rr}{\mathbb R}
\newcommand{\al}{\alpha}
\newcommand{\de}{\delta}
 \newcommand{\eps}{\varepsilon}
\newcommand{\la}{\lambda}
\newcommand{\Pda}{\mathcal P(d\alpha)}
\newcommand{\calP}{\mathcal P}
\newcommand{\ee}{\varepsilon}
\newcommand{\Om}{\Omega}
\newcommand{\C}{{\mathbb C}}
\newcommand{\N}{{\mathbb N}}
\newcommand{\R}{{\mathbb R}}
\newcommand{\un}{u_n}
\newcommand{\wn}{w_n}
\newcommand{\lan}{\lambda_n}
\newcommand{\xn}{x_n}
\newcommand{\an}{\alpha_n}
\newcommand{\In}{\mathcal I_n}
\newcommand{\Vn}{V_n}
\newcommand{\sn}{\sigma_n}
\newcommand{\rn}{\rho_n}
\newcommand{\tOn}{\widetilde\Omega_n}
\newcommand{\twn}{\widetilde w_n}
\newcommand{\tw}{\widetilde w}
\newcommand{\tVn}{\widetilde V_n}
\newcommand{\trn}{\widetilde\rho_n}
\newcommand{\I}{[0,1]}
\newcommand{\tP}{\Phi}
\renewcommand{\ln}{\log}
\newcommand{\abs}[1]{\left\vert#1\right\vert}
\DeclareMathOperator{\diam}{diam}
\DeclareMathOperator{\dist}{dist}
\DeclareMathOperator{\supp}{supp}
\DeclareMathOperator{\meas}{meas}
\def\sideremark#1{\ifvmode\leavevmode\fi\vadjust{\vbox to0pt{\vss% the remark
 \hbox to 0pt{\hskip\hsize\hskip1em%                          will appear only
 \vbox{\hsize3cm\tiny\raggedright\pretolerance10000%          on the side
  \noindent #1\hfill}\hss}\vbox to8pt{\vfil}\vss}}}%
\begin{document}
\title[Existence of stationary turbulent flows]{Existence of stationary turbulent flows with variable 
positive vortex intensity}
\author{F.~De Marchis${}^\ast$}\thanks{${}^\ast$Corresponding author}
\address[F.~De Marchis] {Dipartimento di Matematica, Universit\`{a} di Roma Sapienza, P.le Aldo Moro 5, 00185 Roma, Italy}
\email{demarchis@mat.uniroma1.it}
\author{T.~Ricciardi}
\address[T.~Ricciardi] {Dipartimento di Matematica e Applicazioni, 
Universit\`{a} di Napoli Federico II, Via Cintia, Monte S.~Angelo, 80126 Napoli, Italy}
\email{tonricci@unina.it}
\begin{abstract}
We prove the existence of stationary turbulent flows with arbitrary positive vortex circulation 
on non simply connected domains.
Our construction yields solutions for all real values of the inverse temperature 
with the exception of a quantized set,
for which blow-up phenomena may occur. 
Our results complete the analysis initiated in \cite{RiZeJDE}.
\end{abstract}
\subjclass[2000]{39J91, 35B44, 35J20}
\date{}
\keywords{mean field equation, min-max solutions, turbulent Euler flow} 
\maketitle
%\footnotetext{Thanks ???}
%%%%%%%%%%%%%%%%%%%%%%%%%%%%%%%%%%%%%%%%%%%%%%%%%%%%%%%%%%%%%%%%%%%%%%%%%%%%%%%
\section{Introduction and main results}
Motivated by the statistical mechanics description of turbulent 2D Euler flows in equilibrium, 
we are interested in the existence of solutions to the following problem:
\begin{equation}
\label{eq:Neribis}
\tag*{$(*)_\lambda$}
\left\{\begin{aligned}
-\Delta u =&\lambda\frac{\int_{[0,1]}\al e^{\al u}\,\mathcal P(d\al)}{\iint_{[0,1]\times\Omega}e^{\al u}\,\mathcal P(d\al) dx}&&\hbox{in}\ \Omega\\
   u =&0&&\hbox{on}\ \partial\Omega,
\end{aligned}\right.
\end{equation}
where $\Omega\subset\mathbb R^2$ is a smooth bounded domain, $\la>0$ is a constant
and $\calP\in\mathcal M([0,1])$ is a Borel probability measure.
Problem~\ref{eq:Neribis} was derived by Neri~\cite{Neri} within Onsager's pioneering framework \cite{Onsager},
with the aim of including the case of variable vortex intensities.
More precisely, in \cite{Neri}
the following mean field equation is derived:
\begin{equation}
\label{eq:Neri}
\left\{\begin{aligned}
-\Delta v =&\frac{\int_{[-1,1]}r e^{-\beta r v}\,\mathcal P(dr)}{\iint_{[-1,1]\times\Omega}e^{-\beta rv}\,\mathcal P(dr) dx}&&\hbox{in}\ \Omega\\
v =&0&&\hbox{on}\ \partial\Omega.
\end{aligned}\right.
\end{equation}
Here,  $v$ is the mean field stream function of an incompressible 
turbulent Euler flow,
the Borel probability measure $\calP\in\mathcal M([-1,1])$ 
describes the vortex intensity distribution 
and $\beta\in\rr$ is a constant related to the inverse temperature. 
The mean field equation \eqref{eq:Neri} is derived from the classical Kirchhoff-Routh Hamiltonian 
for the $N$-point vortex
system:
\begin{equation*}
%\label{eq:Hamiltonian}
H^N(r_1,\ldots,r_N, x_1,\ldots,x_N)
=\sum_{i\neq j}r_ir_jG(x_i,x_j)+\sum_{i=1}^Nr_i^2H(x_i,x_i),
\end{equation*}
in the limit $N\to\infty$,
under the \emph{stochastic} assumption that the $r_i$'s are independent identically distributed
random variables with distribution $\mathcal P$.
In the above formula, for $x,y\in\Omega$, $x\neq y$,
$G(x,y)$ denotes the Green's function defined by
\begin{equation*}
%\label{def:G}
\left\{
\begin{aligned}
&-\Delta G(\cdot,y)=\delta_y&&\hbox{in\ }\Omega\\
&G(\cdot,y)=0&&\hbox{on\ }\partial\Omega
\end{aligned}
\right.
\end{equation*}
and $H(x,y)$ denotes the regular part of $G$, i.e.
\begin{equation}
\label{def:H}
H(x,y)=G(x,y)+\frac{1}{2\pi}\log|x-y|.
\end{equation}
Setting $u:=-\beta v$ and $\lambda=-\beta$, and assuming that 
\begin{equation}
\label{assumpt:positivesupp}
\supp\calP\subset[0,1], 
\end{equation}
problem~\eqref{eq:Neri}
takes the form~\ref{eq:Neribis}.
We recall that
\begin{equation*}
%\label{def:suppP}
\supp\calP:=\{\al\in[-1,1]:\ \calP(N)>0\ \mbox{for any open neighborhood $N$ of $\al$}\}.
\end{equation*}
Assumption~\eqref{assumpt:positivesupp} corresponds to the 
case of physical interest where
all vorticities have the same orientation.
\par
We observe that without loss of generality we may assume
\begin{equation}
\label{assumpt:suppP}
1\in\supp\mathcal P.
\end{equation}
Indeed, suppose that $\sup\supp\calP=\bar\al\in(0,1)$.
Then, \ref{eq:Neribis} is equivalent to
\begin{equation*}
\left\{\begin{aligned}
-\Delta u =&\lambda\frac{\int_{[0,\bar\al]}\al e^{\al u}\,\mathcal P(d\al)}
{\iint_{[0,\bar\al]\times\Omega}e^{\al u}\,\mathcal P(d\al) dx}&&\hbox{in}\ \Omega\\
u =&0&&\hbox{on}\ \partial\Omega.
\end{aligned}\right.
\end{equation*}
By the change of variables $\al=\al'\bar\al$, $\bar\calP(A)=\calP(\bar\al A)$
for all Borel sets $A\subset[0,1]$, and setting $\bar u=\bar\al u$, we find that
$\bar u$ satisfies
\begin{equation*}
\left\{
\begin{aligned}
-\Delta\bar u =&\bar\al^2\lambda
\frac{\int_{[0,1]}\al' e^{\al'\bar u}\,\bar\calP(d\al')}
{\iint_{[0,1]\times\Omega}e^{\al'\bar u}\,\bar\calP(d\al')dx}&&\hbox{in}\ \Omega\\
u =&0&&\hbox{on}\ \partial\Omega,
\end{aligned}
\right.
\end{equation*}
which is nothing but $(*)_{\bar\alpha^2\lambda}$, with $\bar\calP$ satisfying \eqref{assumpt:suppP}.
Henceforth, we always assume \eqref{assumpt:suppP}.
\par
When $\mathcal P(d\al)=\delta_1(d\al)$ problem~\ref{eq:Neribis}
reduces to the \emph{standard} mean field problem
\begin{equation*}
\left\{\begin{aligned}
-\Delta u =&\lambda\frac{e^{u}}{\int_{\Omega}e^{u}\,dx}&&\hbox{in}\ \Omega\\
   u =&0&&\hbox{on}\ \partial\Omega,
\end{aligned}\right.
\end{equation*}
which has been extensively analyzed, see e.g. \cite{Lin} and the references therein. 
In the context of turbulence, the case $\calP(d\al)=\de_{1}(d\al)$ was developed in \cite{CLMP}, see also \cite{BarDem2}.
\par
Problem~\ref{eq:Neribis} admits a variational formulation.
Indeed, solutions to \ref{eq:Neribis} correspond to critical points in $H_0^1(\Omega)$
for  the functional
\begin{equation}
\label{eq:Nerifunctional}
J_\lambda(u)=\frac{1}{2}\int_\Omega|\nabla u|^2\,dx-\lambda\ln\Big(\iint_{[0,1]\times\Omega}e^{\al u}\,\Pda dx\Big).
\end{equation}
Whether or not the optimal value of $\la$ such that $J_\la$ is bounded from below 
depends on $\calP$ was 
raised as an open question in \cite{Suzukibook}, p.~192, in relation to 
other apparently similar models for which such a dependence holds true.
However, it was noticed in \cite{RZDIE} that, in fact, $J_\la$ may be viewed as a perturbation
of the standard Moser-Trudinger functional \cite{Moe, Tru} and that, under assumption~\eqref{assumpt:suppP},
such an optimal value of $\la$ is exactly $8\pi$ independently of $\calP$. 
More precisely, it was already observed in \cite{Neri} that $J_\la$ is bounded from below on $H_0^1(\Omega)$ if
$\la\le 8\pi$.
Consequently, the existence of minimizing solutions for \ref{eq:Neribis} was obtained in \cite{Neri} in the 
subcritical range
$\la\in(0,8\pi)$.
In \cite{RiZeJDE} the existence of solutions to \ref{eq:Neribis} was obtained in the supercritical range $\la\in(8\pi,16\pi)$ 
under the \emph{non-degeneracy} assumption 
\begin{equation}
\label{assumpt:nondeg}
\calP(\{1\})>0.
\end{equation}
If \eqref{assumpt:nondeg} is satisfied, problem~\ref{eq:Neribis} may be written in the form $-\Delta u=\rho f(u)$,
with $f(t)=e^t+o(e^t)$ as $t\to+\infty$, $\rho>0$,
and thus it fits into the framework considered in \cite{NaSu, Ye}.
In particular, if \eqref{assumpt:nondeg} is satisfied, 
the techniques in \cite{NaSu, Ye} may be applied
to obtain the mass quantization of concentrating solution sequences. 
On the other hand, the case $\calP(\{1\})=0$ requires extra care.
\par
Thus, our aim in this note is to complete the existence result in \cite{RiZeJDE}
by establishing the existence of solutions to \ref{eq:Neribis} in the supercritical regime, 
\textit{without assuming} \eqref{assumpt:nondeg} and for \textit{all} values of $\lambda$ for which compactness of solution sequences holds. 
\par
In order to state our results precisely, 
we recall that by the Brezis-Merle concentration compactness theory \cite{BM}, 
as adapted in \cite{RZDIE}, an $L^\infty$-unbounded sequence $u_n$ of solutions to $(*)_{\lambda_n}$ necessarily concentrates at a finite number of 
points in $\Omega$, namely
\begin{equation}
\label{eq:BMintro}
\la_n\frac{\int_{[0,1]}\al e^{\al u_n}\,\calP(d\al)}{\iint_{[0,1]\times\Omega}e^{\al u_n}\,\calP(d\al)dx}
\stackrel{\ast}{\rightharpoonup}\sum_{i=1}^m n_i\de_{p_i}(dx)+s(x)\,dx,
\end{equation}
weakly in the sense of $\mathcal M(\Omega)$,
for some $m\in\mathbb N$, $p_i\in\Omega$, $n_i\ge4\pi$, $i=1,2,\ldots,m$, and $s\in L^1(\Omega)$.
Our first aim is to improve \eqref{eq:BMintro} by showing that, actually, there holds $n_i=8\pi$ for all $i=1,2,\ldots,m$, 
moreover $s\equiv0$ and $\la_0\in8\pi\mathbb N$. Namely, we establish the following \emph{mass quantization} result.
\begin{thm}
\label{thm:mq}
Assume that $\calP$ satisfies \eqref{assumpt:suppP}.
Let $\la_n\to\la_0$ and let $\un$ be a concentrating sequence of solutions to $(*)_{\lambda_n}$. Then, there exist $p_i\in\Omega$, $i=1,2,\ldots,m$,
such that, up to subsequences, 
\begin{equation}
\label{eq:Thm1}
\la_n\frac{e^{\al u_n}\,\calP(d\al)}{\iint_{[0,1]\times\Omega}e^{\al u_n}\,\calP(d\al)dx}
\stackrel{\ast}{\rightharpoonup}8\pi\sum_{i=1}^m\de_{1}(d\al)\de_{p_i}(dx),
\end{equation}
weakly in the sense of $\mathcal M([0,1]\times\Omega)$.
In particular, $\la_0\in8\pi\mathbb N$.
\end{thm}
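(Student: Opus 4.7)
The plan is to refine the Brezis-Merle decomposition \eqref{eq:BMintro} by a localized blow-up analysis at each concentration point $p_i$, with the crucial new ingredient being a Laplace-type asymptotic that forces the $\al$-integral to concentrate at $\al=1$ (thus bypassing the non-degeneracy assumption \eqref{assumpt:nondeg} used in \cite{RiZeJDE}). Fix a concentration point $p_i$ and choose $\xn\to p_i$ with $M_n:=u_n(\xn)=\max_{\overline B_r(p_i)} u_n\to+\infty$. Setting $Z_n:=\iint_{[0,1]\times\Omega}e^{\al u_n}\calP(d\al)\,dx$, I choose the natural scale $\en>0$ by
\[
\en^{-2}=\frac{\la_n}{Z_n}\int_{[0,1]}\al e^{\al M_n}\calP(d\al),
\]
so that $\en\to 0$, and the rescaled function $v_n(y):=u_n(\xn+\en y)-M_n$ satisfies $v_n(0)=0$, $v_n\le 0$, and
\[
-\Delta v_n(y)=\int_{[0,1]}e^{\al v_n(y)}\,d\nu_n(\al),\qquad \nu_n(d\al):=\frac{\al e^{\al M_n}\calP(d\al)}{\int_{[0,1]}\al e^{\al M_n}\calP(d\al)}.
\]

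The heart of the argument is to show that $\nu_n\rightharpoonup\de_1$ weakly-$\ast$ on $[0,1]$. For any $\de\in(0,1)$, using $1\in\supp\calP$ one has $\calP([1-\de/2,1])>0$, so
\[
\nu_n([0,1-\de])\le\frac{e^{(1-\de)M_n}}{(1-\tfrac{\de}{2})\calP([1-\tfrac{\de}{2},1])\,e^{(1-\de/2)M_n}}=O\bigl(e^{-\tfrac{\de}{2}M_n}\bigr)\xrightarrow[n\to\infty]{}0.
\]
Since $0\le -\Delta v_n\le 1$ uniformly, standard elliptic regularity yields $v_n\to v$ in $C^2_{\mathrm{loc}}(\rr^2)$; passing to the limit in the right hand side, using $\nu_n\rightharpoonup\de_1$ together with the continuity in $\al$ of $e^{\al v_n(y)}$ and the pointwise bound $e^{\al v_n(y)}\le 1$, I obtain
\[
-\Delta v=e^{v}\ \text{in}\ \rr^2,\qquad v(0)=0,\quad v\le 0,\quad \int_{\rr^2}e^v<\infty.
\]
The Chen-Li classification of entire solutions then forces $v$ to be the standard Liouville bubble with $\int_{\rr^2}e^v=8\pi$.

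The local mass at $p_i$ is then recovered by
\[
n_i=\lim_{R\to\infty}\lim_{n\to\infty}\int_{B_{R\en}(\xn)}\la_n\frac{\int_{[0,1]}\al e^{\al u_n}\calP(d\al)}{Z_n}\,dx=\lim_{R\to\infty}\int_{B_R}(-\Delta v_n)\,dy=8\pi.
\]
To exclude the residual $s(x)\,dx$, I invoke the Brezis-Merle alternatives as adapted in \cite{RZDIE}: on any $\omega\subset\subset\Omega\setminus\{p_1,\ldots,p_m\}$, $u_n$ is either bounded in $L^\infty$ or tends to $-\infty$ uniformly; the bounded alternative is incompatible with $\sum_i n_i$ already accounting for the full mass, so $u_n\to-\infty$ off the concentration set and hence $s\equiv 0$. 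Running the same Laplace estimate at the level of test functions $\fip(\al,x)\in C([0,1]\times\overline\Omega)$ upgrades the concentration to the product form $\de_1(d\al)\,\de_{p_i}(dx)$ in \eqref{eq:Thm1}, and integrating both sides gives $\la_n\to 8\pi m$, whence $\la_0\in 8\pi\N$.

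The main obstacle is the Laplace asymptotic $\nu_n\rightharpoonup\de_1$ in the degenerate regime $\calP(\{1\})=0$, which \cite{RiZeJDE} side-stepped under \eqref{assumpt:nondeg}. The pointwise estimate at a blow-up point is painless because $M_n\to+\infty$ gives exponential decay, but controlling it uniformly over the rescaled ball $B_R$ (where $v_n$ is bounded, so $M_n+v_n(y)\to+\infty$ locally uniformly) is what dictates the precise choice of $\en$ above: this normalization ensures $-\Delta v_n(0)=1$ and prevents the limiting bubble from degenerating to a harmonic function or a solution of $-\Delta v=c\,e^{\al_\ast v}$ with $\al_\ast<1$.
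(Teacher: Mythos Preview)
Your route is different from the paper's. The paper does not rescale to a Liouville bubble for the proof of Theorem~\ref{thm:mq}; instead it (a) shows that for each $\eps>0$ the densities $f_\al=\la e^{\al u}/Z_n$ are uniformly bounded in $L^{(1-\eps)/\al}(\Omega)$ for $\al\in[0,1-\eps)$ (Lemma~\ref{lem:fa}), which forces the singular part of the product limit measure to be supported in $\{1\}\times\mathcal S$, i.e.\ $\zeta_i=n_i\de_1$; and then (b) invokes a Pohozaev--type quadratic identity $8\pi\int\zeta_i=(\int\al\,\zeta_i)^2$ (Lemma~\ref{lem:identity}) to conclude $n_i=8\pi$. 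The rescaling you perform appears in the paper only in an Appendix, explicitly flagged there as ``not needed'' for the proof. Your Laplace estimate $\nu_n\rightharpoonup\de_1$ is the blow-up analogue of step~(a) and is correct; but your replacement for step~(b) is where the argument breaks.

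Specifically, the displayed chain $n_i=\lim_{R\to\infty}\lim_{n\to\infty}\int_{B_{R\en}(\xn)}(-\Delta u_n)=8\pi$ is not justified. The Chen--Li profile only captures the mass at the innermost scale; it says nothing about the annulus $B_r(p_i)\setminus B_{R\en}(\xn)$. Excluding mass in that neck---equivalently, ruling out multiple bubbles at $p_i$---is precisely what requires a further ingredient such as a sup+inf inequality or the Pohozaev identity the paper uses; without it you only obtain $n_i\ge 8\pi$. Your argument for $s\equiv 0$ is also wrong: since $u_n\ge 0$ by the maximum principle, the Brezis--Merle alternative ``$u_n\to-\infty$ uniformly on $\omega$'' is impossible, and in fact $u_n$ \emph{is} bounded in $L^\infty_{\mathrm{loc}}(\Omega\setminus\mathcal S)$ (this is exactly the content of Lemma~\ref{lem:BM}(ii)), so your dichotomy never gets off the ground. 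The paper's correct mechanism is that once $n_i=8\pi$, the weak limit $u_0$ satisfies $u_0(x)\ge 4\sum_i\log|x-p_i|^{-1}+O(1)$, hence $e^{(1-\eps)u_0}\notin L^1(\Omega)$; Fatou then gives $Z_n\to+\infty$, and since the numerator stays bounded on compacta of $\Omega\setminus\mathcal S$ this kills the residual.
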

In the \emph{non-degenerate} case \eqref{assumpt:nondeg}, Theorem~\ref{thm:mq} was established in
\cite{RiZeJDE}, see also \cite{RiZeANS} for an alternative proof. 
\par
Via Theorem~\ref{thm:mq} and a min-max construction, we shall then obtain the existence result for solutions to \ref{eq:Neribis}.
For the existence result we need to assume that $\Om$ is topologically non-trivial, namely that:
\begin{equation}
\label{assumpt:Omega}
\mbox{$\Omega$ is non-simply connected.}
\end{equation}
%\edz{T: Ok cosi'?}
Our existence result is the following.
\begin{thm}
\label{thm:main}
Assume that $\calP$ satisfies \eqref{assumpt:suppP}.
Assume that $\Om$ satisfies \eqref{assumpt:Omega}.
Then, for every $\la\in\cup_{k\in\mathbb N}(8\pi k,8\pi(k+1))$ 
there exists a solution to problem~\ref{eq:Neribis}.
%\edz{adesso è sparita l'ipotesi $1\in supp(\mathcal P)$, aggiungerla in qualche modo ricollegandosi al Remark sotto }
\end{thm}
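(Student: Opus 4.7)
The plan is to combine a min-max construction of Djadli-Malchiodi type with Struwe's monotonicity trick, using the mass quantization in Theorem~\ref{thm:mq} as the compactness device. Fix $\lambda\in(8\pi k,8\pi(k+1))$ with $k\ge 1$, and consider the functional $J_\lambda$ defined in~\eqref{eq:Nerifunctional}. The min-max scheme rests on two complementary ingredients. First, an improved Moser-Trudinger inequality: if the normalized measure associated to $u$ has its $x$-marginal concentrated on $k+1$ or more mutually distant sets, then $J_\lambda(u)$ stays bounded from below. Consequently, on every deep sublevel $\{J_\lambda\le -L\}$ with $L$ large, this marginal is concentrated near at most $k$ points of $\Om$, giving a continuous retraction onto the space $\Om_k$ of formal barycenters of order $k$ in $\Om$. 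Second, for every $\sigma=\sum_{i=1}^k t_i\delta_{p_i}\in\Om_k$ and every small concentration scale $\rho$, one constructs a test function $\tP(\sigma,\rho)\in H_0^1(\Om)$ with $J_\lambda(\tP(\sigma,\rho))\to-\infty$ uniformly in $\sigma$ as $\rho\to 0$. These maps make the sublevel homotopy equivalent to $\Om_k$; since $\Om$ is non-simply connected by~\eqref{assumpt:Omega}, $\Om_k$ is non-contractible (already $H_1(\Om_k)\ne 0$), and this nontrivial topology produces a min-max class with a finite level $c_\lambda$.

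Struwe's trick is then applied: the map $\lambda\mapsto c_\lambda/\lambda$ is monotone on any sub-interval $[a,b]\subset(8\pi k,8\pi(k+1))$, hence differentiable almost everywhere, and at each point of differentiability one extracts a bounded Palais-Smale sequence for $J_\lambda$. Blow-up analysis in the spirit of \cite{BM,RZDIE}, combined with Theorem~\ref{thm:mq}, ensures that bounded Palais-Smale sequences at any $\lambda\notin 8\pi\N$ converge to a critical point, yielding solutions for a dense subset of $(8\pi k,8\pi(k+1))$. To complete the argument, approximate an arbitrary $\lambda$ in that interval by a sequence $\lambda_n\to\lambda$ of good parameters carrying solutions $\un$ of $(*)_{\lambda_n}$: since $\lambda\notin 8\pi\N$, Theorem~\ref{thm:mq} excludes blow-up of $\un$, so $\|\un\|_\infty$ is bounded and standard elliptic regularity delivers a solution at $\lambda$.

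The main obstacle is the absence of the non-degeneracy assumption~\eqref{assumpt:nondeg}, under which \ref{eq:Neribis} fails to reduce to a Liouville-type equation $-\Delta u=\rho\,e^{u}(1+o(1))$; the standard improved Moser-Trudinger inequalities and bubble expansions do not apply verbatim. The remedy is to isolate a small neighborhood of $\al=1$ in $\supp\calP$ --- which carries positive $\calP$-mass thanks to \eqref{assumpt:suppP} --- and to perform the fine asymptotic analysis of concentrating sequences there, using the Brezis-Merle-type techniques of \cite{RZDIE} that also underlie the proof of Theorem~\ref{thm:mq}. Once the improved inequality and the bubble estimates are established in this generality, the min-max arguments of the standard mean field case transfer essentially unchanged.
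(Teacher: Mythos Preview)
Your overall strategy is correct and the paper itself notes that the Djadli--Malchiodi route you outline is a legitimate alternative. The final compactness step---Struwe's monotonicity trick to produce solutions for a.e.\ $\lambda$, followed by Theorem~\ref{thm:mq} to fill in every $\lambda\notin 8\pi\mathbb N$---is exactly what the paper does.

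The min-max construction, however, differs. The paper follows the Bartolucci--De Marchis scheme \cite{BarDem} rather than the barycenter-topology one. Concretely, it fixes a non-contractible curve $\Gamma\subset\Omega$, straightens it to $\partial B_1(0)$ via a diffeomorphism $\chi$, and defines complex moment maps $m_j(u)=\int_\Omega(\chi(x))^j\,d\mu(u)$. The admissible class $\mathcal F_\lambda$ consists of continuous maps $h:D_k\to H_0^1(\Omega)$ whose sublevels go to $-\infty$ at $\partial D_k$ and whose composed moment map $m\circ h$ has nonzero degree on $\partial D_k$; nonemptiness is checked by an explicit bubble family centered along $\Gamma$, with the degree computed via a Vandermonde-type map (Lemma~\ref{lemma:degree}). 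The lower bound $c_\lambda>-\infty$ then comes from combining the improved Moser--Trudinger inequality (Lemma~\ref{mtmigliorata}, proved assuming only \eqref{assumpt:suppP}) with an algebraic continuity lemma (Lemma~\ref{lemma:continuità}): if $J_\lambda\to-\infty$ then the moments $m_j$ must concentrate near zero, contradicting $\chi(\Omega)\cap B_{2\rho}(0)=\emptyset$. Your approach instead builds the min-max class over the full barycenter space $\Omega_k$ and invokes its nontrivial homology; this is more topological and avoids the Vandermonde algebra, at the cost of a heavier projection lemma onto $\Omega_k$. Both approaches use the same improved Moser--Trudinger inequality and the same endgame.

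One small correction: bounded Palais--Smale sequences converge simply by compactness of the Moser--Trudinger embedding, not via Theorem~\ref{thm:mq}; the latter is needed only for passing from a.e.\ $\lambda$ to every $\lambda$, exactly as you state in your last step.
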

We shall obtain the solutions as saddle-type critical points for the Euler-Lagrange functional
$J_\la$ defined in \eqref{eq:Nerifunctional}, following the variational scheme introduced in \cite{BarDem}, 
see also \cite{DJLW}.
It will be clear from the proof that, alternatively, 
we could follow the variational approach introduced in \cite{DjadliMalchiodi}, see also \cite{Djadli}. 
\par
The article is organized as follows.
In Section~\ref{sec:prelims} we recall some known results and we establish some
necessary lemmas.
In Section~\ref{sec:blow-up} we obtain some blow-up results and we 
prove Theorem~\ref{thm:mq}.
In Section~\ref{sec:var} we set up the variational construction and we prove Theorem~\ref{thm:main}.
In the Appendix we show that a suitable rescaling yields a Liouville bubble profile in the limit.
This fact, although not needed in the variational construction, provides an intuitive justification to the quantization
of the values of $\la$ for which blow-up may occur.
In the \lq\lq degenerate" case $\calP(\{1\})=0$, the appropriate rescaling parameters depend on $\calP$
in a non-trivial way.
%%%%%%%%%%%%%%%%%%%%%%%%%%%%%%%%%%%%%%%%%%%%%%%%%%%%%%%%%%%%%%%%%%%%%%%%%%%%%%%%%%%%%%%%%%%%%%%%%%%%%%%%%%%%%%%
\subsection*{Notation}
%%%%%%%%%%%%%%%%%%%%%%%%%%%%%%%%%%%%%%%%%%%%%%%%%%%%%%%%%%%%%%%%%%%%%%%%%%%%%%%%%%%%%%%%%%%%%%%%%%%%%%%%%%%%%%%
We denote by $C>0$ a general large constant whose actual value is allowed to vary.
We denote by $\mathbb N$ the set of positive integers.
When the integration variable is clear from the context we omit it.
Henceforth, we denote $I:=[0,1]$.
%%%%%%%%%%%%%%%%%%%%%%%%%%%%%%%%%%%%%%%%%%%%%%%%%%%%%%%%%%%%%%%%%%%%%%%%%%%%%%%%%%%%%%%%%%%%%%%%%%%%%%%%%%%%%%%
%%%%%%%%%%%%%%%%%%%%%%%%%%%%%%%%%%%%%%%%%%%%%%%%%%%%%%%%%%%%%%%%%%%%%%%%%%%%%%%%%%%%%%%%%%%%%%%%%%%%%%%%%%%%%%%
\section{Preliminary results}
\label{sec:prelims}
%%%%%%%%%%%%%%%%%%%%%%%%%%%%%%%%%%%%%%%%%%%%%%%%%%%%%%%%%%%%%%%%%%%%%%%%%%%%%%%%%%%%%%%%%%%%%%%%%%%%%%%%%%%%%%%
%%%%%%%%%%%%%%%%%%%%%%%%%%%%%%%%%%%%%%%%%%%%%%%%%%%%%%%%%%%%%%%%%%%%%%%%%%%%%%%%%%%%%%%%%%%%%%%%%%%%%%%%%%%%%%%
For the sake of completeness, we collect in this section some preliminary results 
of various nature which will be used in the sequel.
%%%%%%%%%%%%%%%%%%%%%%%%%%%%%%%%%%%%%%%%%%%%%%%%%%%%%%%%%%%%%%%%%%%%%%%%%%%%%%%%%%%%%%%%%%%%%%%%%%%%%%%%%%%%%%%
\subsection{Concentration-compactness principle}
\label{subsection:BM}
%%%%%%%%%%%%%%%%%%%%%%%%%%%%%%%%%%%%%%%%%%%%%%%%%%%%%%%%%%%%%%%%%%%%%%%%%%%%%%%%%%%%%%%%%%%%%%%%%%%%%%%%%%%%%%%
We recall the Brezis-Merle blow-up theory \cite{BM},
as adapted to \ref{eq:Neribis} in \cite{RZDIE}.
Let us define the sequence of measures $\nu_n\in\mathcal M(\Omega)$ by
\[
\nu_n(dx):=\la_n\frac{\int_{\I}\al e^{\al u_n(x)}\,\calP(d\al)}{\iint_{[0,1]\times\Omega}e^{\al u_n(x)}\,\calP(d\al)dx}\,dx.
\]
Then, the following alternative holds true.
\begin{lemma}[Brezis-Merle alternative]
\label{lem:BM}
Let $\un$ be a sequence of solutions to $(*)_{\lambda_n}$
with $\la_n\to\la_0$.
Then, up to subsequences, exactly one of the following alternatives holds:
\begin{enumerate}
\item [(i)]
(Compactness) There exists a solution $u_0\in H_0^1(\Omega)$ to $(*)_{\lambda_0}$ such that $u_n\to u_0$ in any relevant norm;
\item[(ii)]
(Concentration) There exists a finite, non-empty blow-up set $\mathcal S=\{p_1,\ldots,p_m\}\subset\Omega$
such that $\un\in L^\infty_{\mathrm{loc}}(\Omega\setminus\mathcal S)$ and
\begin{equation}
\label{eq:nuconv}
\nu_n\stackrel{\ast}{\rightharpoonup}\sum_{i=1}^m n_i\delta_{p_i}(dx)+s(x)\,dx
\end{equation}
for some $n_i\ge4\pi$, $i=1,\ldots,m$ and for some $s\in L^1(\Omega)$.
\end{enumerate}
\end{lemma}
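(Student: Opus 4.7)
The plan is to adapt the classical Brezis--Merle theory \cite{BM} to the measure-valued nonlinearity of \ref{eq:Neribis}, following the scheme carried out in \cite{RZDIE}. The starting observation is that the sequence $\nu_n$ has uniformly bounded total mass: writing the equation as $-\Delta u_n=V_n$ with $V_n\ge0$ the density of $\nu_n$, and using $\alpha\le1$ on $\supp\calP$, one estimates
\begin{equation*}
\int_\Omega V_n\,dx=\lambda_n\frac{\iint_{I\times\Omega}\alpha e^{\alpha u_n}\,\calP(d\alpha)dx}{\iint_{I\times\Omega}e^{\alpha u_n}\,\calP(d\alpha)dx}\le\lambda_n\to\lambda_0.
\end{equation*}
Hence, up to a subsequence, $\nu_n\stackrel{\ast}{\rightharpoonup}\nu$ in $\mathcal M(\bar\Omega)$ for some finite non-negative measure $\nu$.

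The heart of the argument is a local regularity estimate: if $x_0\in\bar\Omega$ admits a radius $r>0$ with $\limsup_n\nu_n(B_r(x_0))<4\pi$, then $u_n$ is bounded in $L^\infty(B_{r/2}(x_0)\cap\Omega)$. I would establish this by splitting $u_n$ on $B_r(x_0)\cap\Omega$ into the Newtonian potential of $V_n$, to which the Brezis--Merle exponential integrability inequality applies (yielding $e^{pu_n}\in L^1_{\mathrm{loc}}$ for some $p>1$ as soon as the local $L^1$-norm of $V_n$ stays below $4\pi$), plus a harmonic remainder whose boundary behaviour is controlled through the global $L^1$-bound on $u_n^+$. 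Since $\alpha\in[0,1]$, the pointwise bound $\int_I\alpha e^{\alpha u_n}\,\calP(d\alpha)\le e^{u_n^+}$ transfers the $L^p_{\mathrm{loc}}$-control from $e^{u_n}$ to $V_n$, and standard elliptic regularity upgrades this to an $L^\infty$-bound on $u_n$. Setting
\begin{equation*}
\mathcal S:=\bigl\{x_0\in\bar\Omega:\ \textstyle\liminf_{r\to 0}\liminf_n\nu_n(B_r(x_0))\ge4\pi\bigr\},
\end{equation*}
the total mass bound forces $|\mathcal S|\le\lambda_0/(4\pi)<\infty$, and the Dirichlet condition together with a reflection argument across $\partial\Omega$ excludes boundary points from $\mathcal S$, so $\mathcal S\subset\Omega$.

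With this in hand the dichotomy is immediate. If $\mathcal S=\emptyset$, the local $L^\infty$-bound covers a neighbourhood of $\bar\Omega$, so $\|u_n\|_\infty\le C$; standard elliptic theory then delivers $u_n\to u_0$ in $C^{2,\alpha}(\bar\Omega)$ for a solution $u_0$ of $(*)_{\lambda_0}$, which is alternative (i). If $\mathcal S\ne\emptyset$, the same local estimates give that $V_n$ is uniformly bounded in $L^q_{\mathrm{loc}}(\Omega\setminus\mathcal S)$ for some $q>1$, so in the Lebesgue decomposition of $\nu$ the continuous part restricted to $\Omega\setminus\mathcal S$ is absolutely continuous with density $s\in L^1(\Omega)$ (no singular continuous component can survive the uniform $L^q$-control), while the weights $n_i\ge4\pi$ follow directly from the definition of $\mathcal S$.

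The main technical obstacle is propagating the Brezis--Merle exponential inequality across the $\alpha$-integration in the nonlinearity: this is exactly where \eqref{assumpt:positivesupp} is essential, since $\alpha\ge0$ makes $\alpha\mapsto e^{\alpha u_n^+}$ monotone and $\alpha\le1$ ensures the exponent never exceeds $u_n^+$. The detailed adaptation is the content of \cite{RZDIE}, and I would simply invoke it here.
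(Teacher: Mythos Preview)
Your argument is correct in spirit and would lead to the same conclusion, but it is organized differently from the paper's proof in two noteworthy respects. First, rather than working with $V_n=\lambda_n\In^{-1}\int_I\alpha e^{\alpha u_n}\,\Pda$ directly and reconstructing the Brezis--Merle local regularity estimate from the exponential integrability inequality, the paper factors the nonlinearity as $-\Delta u_n=W_n(x)e^{u_n}$ with
\[
W_n(x)=\frac{\lambda_n}{\In}\int_{[0,1]}\alpha\,e^{-(1-\alpha)u_n}\,\Pda,
\]
and uses the maximum principle ($u_n\ge0$) to obtain $0\le W_n\le\lambda_n|\Omega|^{-1}\int_I\alpha\,\Pda$ uniformly. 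This places the problem squarely in the setting of Theorem~3 of \cite{BM} with the potential in $L^\infty$, so the whole alternative follows by direct citation rather than by redoing the local $4\pi$-threshold argument. Second, for the exclusion of boundary blow-up the paper does not reflect across $\partial\Omega$; instead it invokes the two-dimensional moving-plane argument of \cite{GNN}, p.~223, which yields an $\eps$-neighbourhood of $\partial\Omega$ free of stationary points of $u_n$ and hence free of blow-up. Your reflection suggestion is somewhat vague here---odd reflection does not preserve the exponential nonlinearity---whereas the GNN route is both cleaner and what is actually used. In short, the paper's proof is more economical: the $W_n$-factorization plus GNN reduce the lemma to a citation, while your route, though valid, rebuilds machinery that can be borrowed wholesale.
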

\begin{proof}
We first observe that, in view of 
the two-dimensional argument in \cite{GNN} p.~223,
there exists $\eps>0$, depending only on $\Omega$, such that $\un$ has no
stationary point in an $\eps$-neighborhood of $\partial\Om$.
Consequently,
blow-up does not occur on the boundary $\partial\Omega$.
\par
We adapt Theorem~3, p.~1237 in \cite{BM} to our case.
Let
\[
W_n(x):=\la_n\frac{\int_{[0,1]}\al e^{-(1-\al)u_n}\,\Pda}{\iint_{[0,1]\times\Omega}e^{\al' u_n}\,P(d\al')}.
\]
Then, problem $(*)_{\la_n}$ takes the form
\begin{equation*}
%\label{eq:BM}
\left\{
\begin{aligned}
-\Delta u_n=&W_n(x)e^{u_n}&&\mathrm{in\ }\Omega\\
u_n=&0&&\mathrm{on\ }\partial\Omega.
\end{aligned}
\right.
\end{equation*}
By the maximum principle, we have $u_n\ge0$ and hence,
\[
0\le W_n(x)\le\frac{\la_n}{|\Omega|}\int_{[0,1]}\al\,\Pda
\qquad\mbox{for all }x\in\Omega.
\]
Moreover,
\begin{equation}
\label{est:Weu}
\int_\Omega W_n(x)e^{u_n}\,dx\le\la_n.
\end{equation}
%Therefore, we may apply the theory in \cite{BM}.
%We define the measure
%\[
%\nu_n(dx):=W_n(x)e^{u_n}\,dx\in\mathcal M(\Omega).
%\]
Therefore, assumptions~(21)--(22) in
\cite{BM}, Theorem~3, are satisfied with the exponent $p=+\infty$.
Consequently, it is readily seen that the proof of Theorem~3 in \cite{BM}
may be adapted in order to prove that either alternative~(i) holds true,
or there exists a finite set $\mathcal S\subset\Omega$ such that, up to subsequences,
$\un$ is bounded in $L^\infty_{\mathrm{loc}}(\Omega\setminus\mathcal S)$.
In the latter case it follows that \eqref{eq:nuconv} holds true,
i.e., alternative (ii) is satisfied.
\end{proof}
\begin{rmk}
\label{rem:s=0}
In the statement of Theorem~3 in \cite{BM},
a third assumption (23) is made on the sequence $\un$ of solutions to $(*)_{\lambda_n}$, 
namely it is assumed that $\sup_n\int_\Omega e^{\un}<+\infty$.  
Since in our case we only have the weaker 
assumption \eqref{est:Weu}, 
we cannot in general directly apply the arguments in \cite{BM} to show
that $s=0$ in \eqref{eq:nuconv}. 
However, if we assume $\calP(\{1\})>0$,
the proof in \cite{BM} may be adapted. 
Indeed, $(*)_{\lambda_n}$ and  \eqref{eq:nuconv} imply that
$\un\to u_0$ weakly in $W_0^{1,q}(\Omega)$, strongly in $L^q(\Omega)$ for any $1\le q<2$,
and a.e.,
where 
\begin{equation}
\label{stimau0}
u_0(x)\ge\sum_{i=1}^m\frac{n_i}{2\pi}(\ln\frac{1}{|x-p_i|}+H(x,p_i)),
\end{equation}
and where $H$ is defined in \eqref{def:H}.
If $\calP(\{1\})>0$ we may estimate
\begin{equation*}
\iint_{[0,1]\times\Omega}e^{\al u_n(x)}\,\calP(d\al)dx\ge
\calP(\{1\})\int_{\Omega}e^{u_n}\,dx.
\end{equation*}
By Fatou's lemma, \eqref{stimau0} and recalling that $n_i\geq4\pi$,
\[
\liminf_{n\to\infty}\int_\Omega e^{\un}\ge\int_\Omega e^{u_0}=+\infty
\]
and consequently
\[
\iint_{[0,1]\times\Omega}e^{\al u_n(x)}\,\calP(d\al)dx\to+\infty.
\]
This implies $s\equiv0$.
\end{rmk}
%%%%%%%%%%%%%%%%%%%%%%%%%%%%%%%%%%%%%%%%%%%%%%%%%%%%%%%%%%%%%%%%%%%%%%%%%%%%%%%%%%%%%%%%%%%%%%%%%%%%%%%%%%%%%%%
\subsection{Improved Moser-Trudinger inequality}
\label{subsect-mt}
%%%%%%%%%%%%%%%%%%%%%%%%%%%%%%%%%%%%%%%%%%%%%%%%%%%%%%%%%%%%%%%%%%%%%%%%%%%%%%%%%%%%%%%%%%%%%%%%%%%%%%%%%%%%%%%
We shall need an improved Moser-Trudinger inequality for the functional
\eqref{eq:Nerifunctional} defined on the bounded domain $\Om\subset\R^2$.
\par
We recall that
the classical Moser-Trudinger sharp inequality  \cite{Moe,Tru} states that
\begin{equation}
\label{fontana}
C_{MT}:=\sup\left\{\int _\Om e^{4\pi u^2}:\ u\in H^1_0(\Om),\ \|\nabla u \|_2\le 1\right\}<+\infty,
\end{equation}
where the constant $4\pi$ is best possible. 
Moreover, the embedding $u\in H_0^1(\Om)\to e^u\in L^1(\Om)$ is compact.
For a proof, see, e.g., Theorem~2.46 p.~63 in \cite{Aub}.
\par
In view of the elementary inequality
\begin{equation*}
|u|\le\frac{\|\nabla u \|_2^2}{16 \pi}+4\pi  \frac{u^2}{\|\nabla u\|_2^2},
\end{equation*}
we deduce from \eqref{fontana} that
\begin{equation}
\label{ineq:MT}
\log\left(\int_{\Om}  e^{|u|}\,dx\right)\le \frac1{16\pi}\|\nabla u\|_2^2 + \log(C_{MT}),\qquad \mbox{for any \ $u\in H^{1}_0(\Om)$}.
\end{equation}
In particular, the functional
\begin{equation*}
%\label{MT-ifunct}
I_\lambda(u)=\frac{1}{2}\|\nabla u\|_2^2-\lambda\log\int_{\Om}e^{u}\,dx
\end{equation*}
is bounded from below for all $\lambda\leqslant 8\pi$, while it is not difficult to 
check that
\begin{equation}
\label{MT-ifunctunbdd}
\inf_{u\in H^1_0(\Om)} I_\lambda (u)=-\infty
\end{equation}
whenever $\lambda >8\pi$.
Indeed, evaluating the functional $I_\lambda$ on the following adaptation of the 
Liouville bubbles defined in \eqref{def:bubble}
below:
\begin{equation*}
u_\eps(x)=
\begin{cases}
\log\frac{(\eps^2+r_0^2)^2}{(\eps^2+|x-x_0|^2)^2},&\hbox{in\ }B_{r_0}(x_0)\\
0, &\hbox{in\ }\Omega\setminus B_{r_0}(x_0),
\end{cases}
\end{equation*}
yields
\begin{equation*}
\int_\Omega|\nabla u_\eps|^2\,dx=16\pi\log\frac{1}{\eps^2}+O(1),\qquad
\log\int_\Omega e^{u_\eps}\,dx=\log\frac{1}{\eps^2}+O(1)
\end{equation*}
so that
\[
I_\la(u_\eps)=(8\pi-\la)\log\frac{1}{\eps^2}+O(1)\to-\infty\qquad\mbox{as $\eps\to0$.}
\]
\par
The arguments above imply that if \eqref{assumpt:suppP} is satisfied, then
analogous results hold for the Neri's functional $J_\lambda$. 
More precisely, we have
\begin{lemma}[Moser-Trudinger inequality]
\label{lem:TM}
Assume that $\calP$ satisfies \eqref{assumpt:suppP}.
Then
\begin{equation}
\label{ineq:JMT}
\log\iint_{I\times\Omega}e^{\al u}\,\Pda dx\le\frac{1}{16\pi}\|\nabla u\|_2^2+\log C_{MT}\qquad\mbox{for any $u\in H_0^1(\Om)$},
\end{equation}
and the functional $J_\lambda(u)$ is bounded from below on $H^1_0(\Om),$ if and only if $\lambda\le8\pi$.
\end{lemma}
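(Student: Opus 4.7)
\medskip

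\noindent\emph{Proof plan.} The plan is to reduce \eqref{ineq:JMT} to the classical Moser--Trudinger inequality \eqref{ineq:MT} by an elementary pointwise estimate, and then to test $J_\lambda$ on the standard Liouville bubbles to exhibit unboundedness from below when $\lambda>8\pi$. The support assumption \eqref{assumpt:suppP} will be crucial only for the lower bound on the nonlinear term in the bubble computation.

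First, I would prove \eqref{ineq:JMT}. Because $\calP$ is supported in $I=[0,1]$, for every $\alpha\in I$ and every real number $t$ one has $e^{\alpha t}\le e^{|t|}$ (check the two cases $t\ge 0$ and $t<0$ separately). Since $\calP$ is a probability measure, Tonelli gives
\[
\iint_{I\times\Omega}e^{\alpha u}\,\calP(d\alpha)\,dx
\;\le\;\int_{\Omega}e^{|u|}\,dx
\]
for every $u\in H_0^1(\Om)$, and \eqref{ineq:MT} applied to $|u|$ (note that $\|\nabla|u|\|_2=\|\nabla u\|_2$) yields \eqref{ineq:JMT}. From \eqref{ineq:JMT} one obtains
\[
J_\lambda(u)\ge\Bigl(\tfrac{1}{2}-\tfrac{\lambda}{16\pi}\Bigr)\|\nabla u\|_2^2-\lambda\log C_{MT},
\]
which is bounded below whenever $\lambda\le 8\pi$.

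It remains to show that $J_\lambda$ is unbounded below when $\lambda>8\pi$. I would plug in the bubbles $u_\eps$ already written in the excerpt, whose Dirichlet norm and standard exponential integral are given there. The upper estimate $\iint e^{\alpha u_\eps}\calP d\alpha\,dx\le \int_\Om e^{u_\eps}dx$ (using $u_\eps\ge 0$ and $\alpha\le 1$) is not what we need here; we need a \emph{lower} bound to prove unboundedness. By \eqref{assumpt:suppP}, for every $\delta\in(0,1/2)$ one has $\calP([1-\delta,1])>0$, and on the set $\{u_\eps\ge 0\}=\Om$ we have $e^{\alpha u_\eps}\ge e^{(1-\delta)u_\eps}$ for every $\alpha\in[1-\delta,1]$. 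Hence
\[
\iint_{I\times\Om}e^{\alpha u_\eps}\,\calP(d\alpha)\,dx\;\ge\;\calP([1-\delta,1])\int_{\Om}e^{(1-\delta)u_\eps}\,dx.
\]
A direct computation in polar coordinates (with $\sigma=\eps^2+r^2$) yields $\int_{\Om}e^{(1-\delta)u_\eps}dx=c_\delta\,\eps^{4\delta-2}+O(1)$ for some $c_\delta>0$, so
\[
\log\iint_{I\times\Om}e^{\alpha u_\eps}\calP(d\alpha)dx\ge (2-4\delta)\log\tfrac{1}{\eps}+O(1).
\]
Combining with $\tfrac{1}{2}\|\nabla u_\eps\|_2^2=16\pi\log\tfrac{1}{\eps}+O(1)$ gives
\[
J_\lambda(u_\eps)\le\bigl(16\pi-2\lambda+4\lambda\delta\bigr)\log\tfrac{1}{\eps}+O(1).
\]
If $\lambda>8\pi$, choosing $\delta<(\lambda-8\pi)/(2\lambda)$ makes the coefficient negative, and $J_\lambda(u_\eps)\to -\infty$ as $\eps\to 0$.

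The only mildly delicate point is the lower bound on $\iint e^{\alpha u_\eps}\calP\,d\alpha\,dx$: one cannot simply restrict to $\alpha=1$ because, in the degenerate case $\calP(\{1\})=0$, this would give zero contribution. The loss $\delta>0$ in the exponent is unavoidable, but since $\delta$ can be taken arbitrarily small (thanks to $1\in\supp\calP$), the gap $\lambda-8\pi>0$ still beats the loss. This is exactly the same mechanism by which $8\pi$ is shown in \cite{RZDIE} to be the critical threshold independently of $\calP$.
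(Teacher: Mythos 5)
Your proposal is correct and follows essentially the same route as the paper: the upper bound comes from the pointwise estimate $e^{\al u}\le e^{|u|}$ together with the classical Moser--Trudinger inequality, and the optimality comes from restricting $\calP$ to $[1-\delta,1]$ (which has positive mass since $1\in\supp\calP$), using $u\ge0$, testing on the Liouville bubbles, and letting $\delta\to0$. The only cosmetic difference is that the paper packages the second step as a reduction to the standard functional $I_{\lambda(1-\delta)^2}\left((1-\delta)u\right)$ for general $u\ge0$ and then invokes its known unboundedness, whereas you carry out the bubble computation with the exponent $1-\delta$ directly; both yield the same conclusion.
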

Lemma~\ref{lem:TM} was established in \cite{RiZeJDE} for functions $u\in H^1(M)$
satisfying $\int_Mu=0$, where $M$ is a two-dimensional compact Riemannian manifold.
The proof for $u\in H^1_0(\Om)$ is similar. For the sake of completeness, we outline it below. 
\par
On the other hand, in the next Lemma we show that the constant $\frac{1}{16\pi}$ in \eqref{ineq:JMT} may be lowered if the quantity
\[
\frac{\int_{I} e^{\alpha u(x)}\,\calP(d\alpha)}{\iint_{I\times\Om} e^{\alpha u}\,\calP(d\alpha)dx},
\]
which may be interpreted as the \emph{mass} of $u$, is suitably distributed. 
Namely, following ideas of \cite{Aub, cl}, we prove:
\begin{lemma}[Improved Moser-Trudinger inequality]
\label{mtmigliorata} 
Assume that $\calP$ satisfies \eqref{assumpt:suppP}.
Let   $d_0>0$,  $a_0\in (0,1/2)$ and for a fixed positive integer $\ell$, 
let $\Omega_1,\ldots,\Omega_{\ell+1}$ be subsets of 
$\Omega$ satisfying $\dist(\Omega_i,\Omega_j)\geq d_0$, for all $i\neq j$. 
Then, for any $\ee>0$ there exists a constant 
$K=K(\ee,d_0,a_0,\ell)>0$ such that if $u\in H^1_0(\Omega)$ satisfies 
\begin{equation}
\label{omegai}
\frac{\iint_{I\times\Om_i } e^{\alpha u}\calP (d\alpha)dx}{\iint_{I\times\Om} e^{\alpha u}\calP (d\alpha)dx}\ge a_0,
\qquad i=1,\ldots,\ell+1,
\end{equation}
then it holds
\begin{equation}
\label{tesimt}
\log\left(\iint_{I\times\Om} e^{\alpha u}\calP (d\alpha)dx\right)\le\frac{1}{16(\ell+1)\pi-\ee}\int_\Omega|\nabla u|^2\,dx+K.
\end{equation}
\end{lemma}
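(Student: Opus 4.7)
My plan is to adapt to the Neri functional the classical improved Moser-Trudinger argument of Aubin~\cite{Aub} and Chen-Li~\cite{cl}: average the standard Moser-Trudinger inequality over $\ell+1$ localized versions of $u$ so as to convert the multiplicity $\ell+1$ into the sharp leading constant.

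First I would construct smooth cut-offs $g_1,\ldots,g_{\ell+1}$ with pairwise disjoint supports $A_i$ contained in the $d_0/4$-neighborhood of $\Omega_i$, each satisfying $g_i\equiv 1$ on $\Omega_i$, $0\le g_i\le 1$, and $|\nabla g_i|\le C/d_0$. Set $\hat u_i:=g_i u\in H_0^1(\Omega)$. Since $\alpha\ge 0$ and $\hat u_i=u$ on $\Omega_i$, one has
\[
\iint_{I\times\Omega_i} e^{\alpha u}\,\calP(d\alpha)dx
=\iint_{I\times\Omega_i} e^{\alpha\hat u_i}\,\calP(d\alpha)dx
\le\iint_{I\times\Omega} e^{\alpha\hat u_i}\,\calP(d\alpha)dx.
\]
Combining this with the hypothesis~\eqref{omegai} and applying the Moser-Trudinger inequality~\eqref{ineq:JMT} to $\hat u_i$, I obtain for each $i$
\[
\log\iint_{I\times\Omega}e^{\alpha u}\,\calP(d\alpha)dx\le\frac{1}{16\pi}\|\nabla\hat u_i\|_2^2+\log\frac{C_{MT}}{a_0}.
\]
Averaging these $\ell+1$ inequalities converts the leading coefficient $1/(16\pi)$ into $1/(16(\ell+1)\pi)$ in front of $\sum_i\|\nabla\hat u_i\|_2^2$.

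It then remains to bound $\sum_i\|\nabla\hat u_i\|_2^2$ by essentially $\|\nabla u\|_2^2$. Expanding $|\nabla\hat u_i|^2=g_i^2|\nabla u|^2+2g_i u\nabla g_i\cdot\nabla u+u^2|\nabla g_i|^2$, bounding the cross term via Young's inequality with a small parameter $\eta>0$, and using the pointwise estimate $\sum_i g_i^2\le 1$ from the disjointness of the supports, I obtain
\[
\sum_{i=1}^{\ell+1}\|\nabla\hat u_i\|_2^2\le(1+\eta)\|\nabla u\|_2^2+C(\eta,d_0)\|u\|_{L^2(\Omega)}^2.
\]

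The hard part is to absorb the error $C(\eta,d_0)\|u\|_2^2$ with only a multiplicative $1+o(1)$ loss on $\|\nabla u\|_2^2$, as is needed to reach the nearly sharp constant $1/(16(\ell+1)\pi-\ee)$; a plain use of Poincar\'e would only yield a fixed factor strictly larger than $1$ and is too weak. To overcome this I would refine the construction by using logarithmic-type cut-offs in the spirit of Aubin, for which $\|\nabla g_i\|_{L^2}$ becomes arbitrarily small when the transition region is widened, and couple this with a two-dimensional Hardy-Leray inequality with logarithmic weight to control $\|u|\nabla g_i|\|_2^2$ by $\eta'\|\nabla u\|_2^2+K(\eta',d_0)$ with $\eta'>0$ as small as we wish. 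Choosing $\eta,\eta'$ such that $(1+\eta+\eta')/(16(\ell+1)\pi)\le 1/(16(\ell+1)\pi-\ee)$ then yields~\eqref{tesimt} with $K=K(\ee,d_0,a_0,\ell)$.
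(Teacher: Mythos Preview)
Your first steps (cut-offs, localized Moser--Trudinger, averaging) are fine, and you correctly isolate the real obstacle: the zero-order term $\|u\,|\nabla g_i|\|_2^2$ must be absorbed with an \emph{arbitrarily small} prefactor in front of $\|\nabla u\|_2^2$, not a fixed Poincar\'e constant. But your proposed fix does not work. The transition regions are confined by the fixed separation $d_0$, since the supports of the $g_i$ must remain pairwise disjoint for $\sum_i g_i^2\le1$ to hold; you cannot ``widen'' them, and a logarithmic cut-off across an annulus of fixed aspect ratio has $\|\nabla g\|_2^2\sim 2\pi/\log(R/r)$ bounded below by a positive constant depending only on $d_0$. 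The Hardy--Leray route is also a dead end: in two dimensions the weight such inequalities control is of the singular type $|x|^{-2}(\log|x|)^{-2}$, not a bounded weight like $|\nabla g_i|^2\le C/d_0^2$. In fact an estimate of the form $\int_\Omega u^2\le \eta'\|\nabla u\|_2^2+K(\eta')$ with $\eta'$ arbitrarily small is simply false for $u\in H_0^1(\Omega)$ (scale a fixed nonzero function), so no choice of cut-offs alone can rescue the argument as written.

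The device that does the job---and that the paper uses, following Chen--Li~\cite{cl}---is \emph{truncation}: one applies \eqref{ineq:MT} not to $g_iu$ but to $g_i(|u|-a)^+$, where $a>0$ is chosen so that $\meas\{|u|\ge a\}=\eta$. The error term then becomes $\|(|u|-a)^+\nabla g_i\|_2^2\le c(d_0)\|(|u|-a)^+\|_2^2$, and H\"older together with the Sobolev embedding $H_0^1\hookrightarrow L^4$ yield $\|(|u|-a)^+\|_2^2\le C\eta^{1/2}\|\nabla u\|_2^2$, with $\eta>0$ at our disposal. The level $a$ enters only additively in the exponent and is controlled by $a\le\tfrac{\delta}{2}\|\nabla u\|_2^2+C/(\delta\eta^2)$ via Schwarz and Poincar\'e. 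Choosing $\tau,\eta,\delta$ small enough then gives the constant $1/(16(\ell+1)\pi-\ee)$. As a minor stylistic variant, the paper does not average over all $i$ but selects the index $i_0$ with the smallest $\|g_{i_0}\nabla u\|_2$, so that $(\ell+1)\|g_{i_0}\nabla u\|_2^2\le\sum_i\|g_i\nabla u\|_2^2\le\|\nabla u\|_2^2$ directly.
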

We begin by outlining the proof of Lemma~\ref{lem:TM}.
\begin{proof}[Proof of Lemma~\ref{lem:TM}]
The ``if" part is immediate and was already used in \cite{Neri} in order
to obtain solutions to \ref{eq:Neribis} for all $\la\in(0,8\pi)$.
Indeed, we have
\begin{equation*}
\iint_{I\times\Om}  e^{\alpha u}\calP(d\alpha)\,dx\le\int_\Om e^{|u|}\,dx
\le C_{MT}\, e^{\frac1{16\pi}\|\nabla u\|_2^2},
\end{equation*}
for all  $u \in H^{1}_0(\Om)$. Therefore $J_\lambda$ is bounded below
if $\lambda\le8\pi$.
On the other hand
the value $8\pi$ is also optimal,
provided that $1\in\supp\calP$.
In order to show it one needs only to prove that
\begin{equation}
\label{opt}
\inf_{u\in H^1_0(\Om),}J_\lambda(u)=-\infty, \qquad \quad\mbox{for any \ $\lambda >8\pi$.}
\end{equation}
Assume \eqref{assumpt:suppP}. Since the functional $I_\lambda(u)$
is unbounded from below for $\lambda >8\pi$, then also the functional
\begin{equation*}
I_\lambda(u)_{|_{u\ge0}}=\frac{1}{2}\|\nabla u\|_2^2-\lambda\log\int_{\Om}e^{u}\,dx, \qquad u\in H^1_0(\Om),\,\, u\ge0
\end{equation*}
is unbounded below for $\lambda >8\pi$. At this point we observe that for every $0<\delta<1$ and $u\ge0$, $u\in H^1_0(\Om),$ we have:
\begin{equation*}
%\label{MT-posit}
\begin{split}
J_\lambda (u) &= \frac 12 \|\nabla u\|_{2}^2-\lambda \log \iint_{I\times\Om} e^{\alpha u}\calP (d\alpha )dx
\le\frac 12 \|\nabla u\|_{2}^2-\lambda \log \iint_{[1-\delta,1]\times \Om}  e^{\alpha u}\calP(d\alpha)  dx\\
&\le\frac 12\|\nabla u\|_{2}^2-\lambda \log \int_\Om  e^{(1-\delta) u} dx -\lambda \log (\calP([1-\delta,1]))\\
&= \frac{1}{(1-\delta)^2} \left[\frac 12 \|(1-\delta)\nabla  u\|_2^2-\lambda 
(1-\delta)^2 \log\left(\int_\Om  e^{(1-\delta) u} dx\right) \right] -\lambda \log (\calP([1-\delta,1])) \\
&= \frac{1}{(1-\delta)^2} I_{\lambda (1-\delta)^2} \left ((1-\delta) u\right) -\lambda \log (\calP([1-\delta,1])). 
\end{split}
\end{equation*}
Hence, for $\lambda (1-\delta)^2>8\pi$, the right hand side of the last inequality is unbounded from below  
in view of \eqref{MT-ifunctunbdd}, and therefore
\[
\inf_{u\in H^1_0 (\Om)}J_\lambda (u) =-\infty  \qquad \mbox{ for any }\lambda >\frac{8\pi}{(1-\delta)^2}.
\]
Since $\delta\in(0,1)$ is arbitrary, \eqref{opt} follows.
\end{proof}
In order to prove Lemma~\ref{mtmigliorata},
we adapt  some ideas contained in \cite{cl}, Proposition~1.   
\begin{proof}[Proof of Lemma~\ref{mtmigliorata}]
Let $g_1,\ldots,g_{\ell+1}$ be smooth functions defined on $\Om$
such that $0\leqslant g_i\le1$, $g_i \equiv 1$ on $\Om_i$, $|\nabla g_i |\le c(d_0)$, for $i=1,\ldots,\ell+1$, 
and $\supp (g_i)\cap\supp (g_j) =\emptyset$ if $i\neq j$. 
Up to relabelling, we may assume that 
\begin{equation}
\label{cancelletto}
\|g_1 \nabla u\|_{L^2(\Om)}\le\| g_i\nabla  u\|_{L^2(\Om)}\qquad\mbox{for any $i=2,\ldots,\ell+1$}.
\end{equation}  
For every $t\in\rr$ we denote $t^+= \max \{0,t\}$. We fix $a>0$.
In view of \eqref{ineq:MT}, applied to $g_1(| u|-a)^+$, we have that
\begin{equation}
\label{passo}
\int_{ \Om}e^{g_1(| u|-a)^+}\le C_{MT}\exp\left\{\frac1{16\pi}\|\nabla \left[ g_1(| u|-a)^+\right] \|^2_{L^2(\Om)}\right \}.
\end{equation}
Hence, in view of assumption~\eqref{omegai} and  of \eqref{passo}, 
using the elementary  inequality $(A+B)^2\le(1+\tau)A^2+c(\tau) B^2$,
for any $\tau>0$, we get
\begin{align*}
\label{mtmigl}
\iint_{I\times\Om}&e^{\alpha u} \Pda dx\le\frac{e^a}{a_0} \iint_{I\times \Om_1}e^{(\alpha u-a)^+} \mathcal{P}(d\alpha )dx\\
&\le\frac{e^a}{a_0} \iint_{I\times \Om}e^{g_1(\alpha u-a)^+} \mathcal{P}(d\alpha) dx\leqslant \frac{e^a}{a_0} \int_{\Om}e^{g_1(| u|-a)^+}dx \\
&\le\frac{ C }{a_0 }  \exp \left\{  \frac1{16 \pi}  \|\nabla\left[  g_1(|u|-a)^+ \right]\|^2_{L^2(\Om)}+a\right\}\\
& \le\frac{ C }{a_0 } \exp \left\{  \frac1{16 \pi} \left[ (1+\tau) \|g_1\nabla u \|^2_{L^2(\Om)} +c(\tau) \| (| u|-a)^+\nabla g_1\| _{L^2(\Om)}^2\right]
+a\right\}\\
&\overset{\eqref{cancelletto}}{\le}\frac{ C }{a_0 }  \exp \left\{  \frac1{16 \pi(\ell+1)} \left[ (1+\tau) 
\sum_{i=1}^{\ell+1}\|g_i\nabla u \|^2_{L^2(\Om)}+c(\tau,d_0) \| (| u|-a)^+\| _{L^2(\Om)}^2\right]+a\right\}\\
&\le\frac{ C }{a_0} \exp \left\{  \frac1{16 \pi(\ell+1)} 
\left[ (1+\tau) \|\nabla u \|^2_{L^2(\Om)}+c(\tau,d_0) \| (| u|-a)^+\| _{L^2(\Om)}^2\right]+a\right\},
\end{align*}
where we used $\supp (g_i)\cap\supp (g_j)=\emptyset$ to derive the last inequality and
where $C=C_{MT}$.
\par
For a given real number $\eta\in(0,|\Om|)$, let $a$ be such that $\meas(\{x\in \Om : | u(x)|\ge a\}) =\eta$. 
Then, by the H\"{o}lder and Sobolev inequalities we have
\begin{equation*}%\label{unsigm}
\begin{split}
 \|(| u|-a)^+\|^2_{L^2(\Om)}=\eta^{1/2}\left(\int_{\{x\in \Om:|u|>a\}}(|u|-a)^4\right)^{1/2}\le\eta^{1/2}C\|\nabla u\|^2_2.
\end{split}
\end{equation*}
Using the Schwarz and Poincar\'e inequalities, we finally derive
\begin{equation*}
a\eta\le\int_{\{x\in\Om:|u|\ge a\}}|u|\le|\Om|^{1/2}\left( \int_\Om |u|^2\right)^{1/2}\le C\|\nabla u\|_2,
\end{equation*}
and therefore, for any small $\delta>0$,
\begin{equation*}%\label{av1}
a \le\frac \delta 2\|\nabla u \|_2^2 +\frac{C^2}{2\delta \eta^2}.
\end{equation*}
In conclusion, we have derived that 
\[
\iint_{I\times\Om}e^{\alpha u} \Pda dx\le\frac{ C }{a_0}\exp\left\{\frac1{16 \pi(\ell+1)} 
\left(1+\tau+c(\tau,d_0)\eta^{\frac12}C+\frac\delta2\right)\|\nabla u \|^2_{L^2(\Om)}+\frac{C^2}{2\delta \eta^2} \right\}.
\]
Let $\tilde\ee=\frac{\ee}{16\pi(\ell+1)-\ee}$. Fixing $\tau<\frac{\tilde\varepsilon}3$, 
$\eta$ such that $c(\tau,d_0)\eta^{\frac12}C<\frac{\tilde\ee}{3}$ and $\delta$ such that $\frac{\delta}{2}<\frac{\tilde\ee}{3}$, 
the asserted improved Moser-Trudinger inequality \eqref{tesimt} is completely established.
\end{proof}

%
%\begin{lemma}\label{dascrivere}
%If $\lambda\in(8k\pi,8(k+1)\pi)$ with $k\geq1$, the following property holds. For any $\ee>0$ and any $r>0$, there exists a large positive $L=L(\ee,r)$ such that, for every $u\in H^1_0(\Omega)$ with $J_\lambda(u)\leq -L$, there exist $k$ points $p_{1,u},\ldots,p_{k,u}\in\overline\Omega$\edz{controllare che vada bene la chiusura di $\Omega$} such that
%\[
%\frac{\iint_{I\times \Omega\setminus\cup_{i=1}^k B_r(p_{i,u})}e^{\alpha u}\mathcal{P}(d\alpha)dx}{\iint_{I\times \Omega}e^{\alpha u}\mathcal{P}(d\alpha)dx}<\ee.
%\]
%\end{lemma}
%\begin{proof}
%We refer to \cite{Djadli} or \cite{Malchiodi}.
%\end{proof}
Using Lemma~\ref{mtmigliorata} we can characterize the limiting behavior of sequences of measures on $\Omega$ of the form 
\begin{equation}
\label{def:fconc}
\frac{\int_Ie^{\alpha u_{n}}\mathcal{P}(d\alpha)dx}{\iint_{I\times \Omega}e^{\alpha u_{n}}\mathcal{P}(d\alpha)dx}, 
\end{equation}
where the functions $u_n$ are such that the functional $J_\la$ attains arbitrarily large negative values.
Such a characterization will be used in an essential way in the variational scheme, in particular
in the proof of Proposition \ref{prop:clambdaboundedfrombelow} below.
\begin{lemma}[Concentration property]
\label{lemma:beta_i}
Assume that $\calP$ satisfies \eqref{assumpt:suppP}.
Let $\lambda\in(8k\pi,8(k+1)\pi)$, $k\geq1$, and let $\{u_n\}\subset H^1_0(\Omega)$ be a sequence of functions satisfying $J_\lambda(u_n)\to-\infty$. For any $\ee>0$ and for any $r>0$, there exists a subsequence $\{u_{n_j}\}$ (depending only on $\ee$ and $r$) and $\ell$ points, $\ell\in\{1,\ldots,k\}$, $p_1,\ldots,p_\ell\in\overline\Omega$ (which do not depend on $j$) such that
\begin{equation}\label{condizionelemmabeta_i}
\frac{\iint_{I\times (\Omega\setminus\cup_{i=1}^\ell B_r(p_i))}e^{\alpha u_{n_j}}\mathcal{P}(d\alpha)dx}{\iint_{I\times \Omega}e^{\alpha u_{n_j}}\mathcal{P}(d\alpha)dx}<\ee\qquad\mbox{for any $i\in\{1,\ldots,\ell\}$ and for any $j$}
\end{equation}
and
\begin{equation}\label{limitbeta_i}
\lim_{j\to+\infty}\left( \frac{\iint_{I\times (B_r(p_i)\setminus\cup_{h=1}^{i-1} B_r(p_h))}e^{\alpha u_{n_j}}\mathcal{P}(d\alpha)dx}{\iint_{I\times \Omega}e^{\alpha u_{n_j}}\mathcal{P}(d\alpha)dx} \right)=\beta_i>0\qquad\mbox{for any $i\in\{1,\ldots,\ell\}$,}
\end{equation}
with $\sum_{i=1}^\ell \beta_i=1$.
\end{lemma}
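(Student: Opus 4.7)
The plan is to analyse the weak-$*$ limit of the normalised probability measures
\[
\mu_n(dx):=\frac{\int_Ie^{\alpha u_n}\calP(d\alpha)}{\iint_{I\times\Om}e^{\alpha u_n}\calP(d\alpha)dx}\,dx\in\mathcal M(\overline\Om).
\]
Since $\overline\Om$ is compact, up to extraction $\mu_n\stackrel{*}{\rightharpoonup}\mu$ for some probability measure $\mu$ on $\overline\Om$. The heart of the argument is to show that $\mu$ is purely atomic with at most $k$ atoms; the points $p_i$ will then be produced from the atoms of $\mu$ by a simple greedy procedure, and \eqref{condizionelemmabeta_i}--\eqref{limitbeta_i} will follow from Portmanteau-type inequalities.

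The crucial step, and the main obstacle, is to exclude simultaneously the presence of at least $k+1$ atoms and the presence of a non-atomic component in $\mu$. I argue by contradiction, exploiting the strict gap $\lambda<8(k+1)\pi$ together with the improved Moser-Trudinger inequality (Lemma~\ref{mtmigliorata}) applied with $\ell=k$. If $\mu$ had at least $k+1$ atoms of positive mass, or a non-atomic part (whose support, being compact and containing no isolated full-mass point, would contain infinitely many distinct points), one could select $k+1$ distinct points $q_1,\dots,q_{k+1}\in\supp\mu$ and a radius $\rho>0$ so small that the balls $B_\rho(q_i)$ are pairwise disjoint with mutual distance at least some $d_0>0$ and with $\mu(B_\rho(q_i))\ge 2a_0$ for some $a_0\in(0,1/2)$. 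By the Portmanteau inequality applied to the open sets $B_\rho(q_i)$ this would give $\liminf_n\mu_n(B_\rho(q_i))\ge 2a_0$, so hypothesis \eqref{omegai} of Lemma~\ref{mtmigliorata} would hold for $u_n$ with $\Omega_i:=B_\rho(q_i)\cap\Om$ and $\ell=k$ for all $n$ large. Choosing $\ee>0$ small enough so that $\lambda<8(k+1)\pi-\ee/2$ and inserting \eqref{tesimt} into the definition \eqref{eq:Nerifunctional} of $J_\la$ would then yield
\[
J_\la(u_n)\ge\Big(\frac12-\frac{\la}{16(k+1)\pi-\ee}\Big)\|\nabla u_n\|_2^2-\la K\ge-\la K,
\]
in contradiction with $J_\la(u_n)\to-\infty$. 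Hence $\mu=\sum_{i=1}^{\ell_0}\beta_i^0\delta_{q_i}$ with $\ell_0\le k$, distinct $q_i\in\overline\Om$, $\beta_i^0>0$ and $\sum_i\beta_i^0=1$.

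Given $r>0$ and $\ee>0$, the points $p_i$ are extracted greedily from $\{q_1,\dots,q_{\ell_0}\}$: set $p_1:=q_1$ and, having chosen $p_1,\dots,p_i$, let $p_{i+1}$ be the first $q_j$ not contained in $\cup_{h\le i}B_r(p_h)$, stopping as soon as $\{q_1,\dots,q_{\ell_0}\}\subset\cup_h B_r(p_h)$. The procedure terminates in $\ell\le\ell_0\le k$ steps, and by construction every atom of $\mu$ lies in exactly one set of the form $B_r(p_i)\setminus\cup_{h<i}B_r(p_h)$, which in particular contains $p_i$ itself. Condition \eqref{condizionelemmabeta_i} follows from the upper Portmanteau bound applied to the closed set $\overline\Om\setminus\cup_iB_r(p_i)$, which carries no $\mu$-mass because all atoms of $\mu$ sit in the open union $\cup_iB_r(p_i)$; passing to a further subsequence (depending on $\ee$ and $r$) makes the corresponding $\mu_{n_j}$-mass strictly smaller than $\ee$. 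For \eqref{limitbeta_i}, after one more subsequence the quantities $\mu_{n_j}(B_r(p_i)\setminus\cup_{h<i}B_r(p_h))$ converge to limits $\beta_i$; squeezed between the lower (open-set) and upper (closed-set) Portmanteau bounds, each $\beta_i$ equals the sum of the $\beta_j^0$ corresponding to atoms first captured by $B_r(p_i)$, a negligible perturbation of $r$ disposing of the degenerate case in which some atom sits on a sphere $\partial B_r(p_h)$. Since each $p_i$ belongs to its own layer, $\beta_i\ge\beta_{j(i)}^0>0$, and summing over $i$ gives $\sum_{i=1}^\ell\beta_i=\sum_{j=1}^{\ell_0}\beta_j^0=1$.
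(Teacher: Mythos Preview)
Your argument supplies the details that the paper leaves to the citations \cite{Malchiodi} and \cite{BarDem}: the strategy---pass to a weak-$*$ limit $\mu$ of the normalised densities, use Lemma~\ref{mtmigliorata} with $\ell=k$ to rule out $k+1$ separated positive-mass regions in $\supp\mu$ (hence $\mu$ is purely atomic with at most $k$ atoms), and then read the $p_i$ off the atoms---is precisely the mechanism behind those references, and your contradiction step via $J_\la(u_n)\ge-\la K$ is clean and correct.

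There is, however, one genuine slip in the final paragraph. You propose ``a negligible perturbation of $r$'' to handle the degenerate case in which some atom $q_j$ lies on a sphere $\partial B_r(p_h)$. But $r$ is \emph{given} in the hypothesis and cannot be moved. In a degenerate configuration (e.g.\ $\ell_0=2$ with $|q_1-q_2|=r$) your greedy procedure returns $\ell=2$, yet the $\mu_{n_j}$-mass near $q_2$ may approach $q_2$ entirely from inside $B_r(q_1)$, yielding $\beta_2=0$ along the chosen subsequence and violating \eqref{limitbeta_i}. The repair is to perturb the \emph{centers} rather than the radius: the lemma only requires $p_i\in\overline\Omega$, not $p_i\in\{q_1,\dots,q_{\ell_0}\}$. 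Run the greedy procedure, but at step $i$ replace the next uncovered atom $q_{j(i)}$ by a nearby point $p_i$ chosen so that no atom lies on $\partial B_r(p_i)$ (finitely many forbidden spheres for $p_i$, hence a generic small displacement works). Inductively this guarantees that each uncovered atom is at distance strictly greater than $r$ from all earlier $p_h$, so $q_{j(i)}$ lies in the \emph{interior} of $A_i$ and every $\partial A_i$ is $\mu$-null. Portmanteau then gives $\mu_{n_j}(A_i)\to\mu(A_i)$ directly, each $\beta_i=\mu(A_i)$ is a nonempty partial sum of the $\beta_j^0$ (hence positive), and $\sum_i\beta_i=1$ follows without further extraction.
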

\begin{proof}
The proof is a direct consequence of some general concentration properties of $L^1$-functions,
as obtained in Lemma 3.3 in \cite{Malchiodi} and
Lemma 2.4 in \cite{BarDem}, applied to the functions \eqref{def:fconc}.
\end{proof}
%%%%%%%%%%%%%%%%%%%%%%%%%%%%%%%%%%%%%%%%%%%%%%%%%%%%%%%%%%%%%%%%%%%%%%%%%%%%%%%%%%%
\subsection{Properties of some Vandermonde-type maps}
%%%%%%%%%%%%%%%%%%%%%%%%%%%%%%%%%%%%%%%%%%%%%%%%%%%%%%%%%%%%%%%%%%%%%%%%%%%%%%%%%
Finally, we collect some results from \cite{BarDem} concerning Vandermonde-type maps. 
Such properties will be needed in order to perform the variational scheme in the proof of Theorem \ref{thm:main},
and in particular to prove Proposition \ref{prop:clambdaboundedfrombelow} below.
\par
Henceforth, for $k\in\N$, we denote by $\underline{z}_k$ the vector $\underline{z}_k=(z_1,\ldots,z_k)\in\C^k$. 
In particular we denote by $\underline{0}_k\in\C^k$ the null vector, i.e., the vector whose entries are all equal to $0\in\C$.
\par
Let $D_k$ be the open unit ball of $\C^k$, namely
\[
D_k=\{\underline{z}_k\in\C^k\,|\, |z_1|^2+\ldots+|z_k|^2<1\}.
\]
Let $\tP_{k}: \C^k\mapsto \C^k$ be the continuous map defined as 
\begin{equation}\label{Psitilde}
\tP_k(\underline{z}_k):=\left(
\begin{array}{ccccccc}
z_1\abs{z_1}&+&z_2\abs{z_2}&+&\cdots&+& z_{  k}\abs{z_{  k}}\\
z_1^2&+&z^2_2&+&\cdots&+&z^2_{  k}\\
& {\vdots}& & {\vdots}& & {\vdots}&\\
z_1^2(\frac{z_1}{\abs{z_1}})^{j-2}&+&z_2^2(\frac{z_2}{\abs{z_2}})^{j-2}&+&\cdots&+&z_{  k}^2(\frac{z_{  k}}{\abs{z_{  k}}})^{j-2}\\
& {\vdots}& & {\vdots}& & {\vdots}&\\
z_1^2(\frac{z_1}{\abs{z_1}})^{k-2}&+&z_2^2(\frac{z_2}{\abs{z_2}})^{k-2}&+&\cdots&+&z_{  k}^2(\frac{z_{  k}}{\abs{z_{  k}}})^{k-2}
\end{array}
\right).
\end{equation}
In \cite{BarDem}, Lemma 4.1, the degree of $\tP_k$ was considered and the following
was established.
\begin{lemma}\label{lemma:degree}
If $k\in \N$, then
$$
\deg\left(\tP_{k},\partial D_k,\underline{0}_k\right)\neq0.
$$
\end{lemma}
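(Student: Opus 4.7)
\textbf{The plan} is to establish the lemma in two steps: first verify that $\tP_k(\underline{z}_k) \neq \underline{0}_k$ for every $\underline{z}_k \in \partial D_k$ (so the Brouwer degree $\deg(\tP_k, D_k, \underline 0_k)$ is well-defined), and then compute the degree by exploiting the positive homogeneity and permutation symmetry of $\tP_k$. The unifying observation driving both steps is that, writing $z_i = r_i e^{i\theta_i}$ with $r_i \geq 0$ and using the continuous extension at $r_i = 0$ (each summand decays like $|z_i|^2$), the $j$-th component of $\tP_k(\underline z_k)$ collapses to the weighted Fourier coefficient
\begin{equation*}
\bigl(\tP_k(\underline z_k)\bigr)_j = \sum_{i=1}^k r_i^2 e^{ij\theta_i}, \qquad j = 1, \ldots, k.
\end{equation*}

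\textbf{Non-vanishing step.} Suppose by contradiction that $\tP_k(\underline z_k) = \underline 0_k$ for some $\underline z_k \in \partial D_k$. Grouping indices with coinciding $e^{i\theta_i}$ and discarding those with $r_i = 0$ produces distinct unit complex numbers $\eta_1, \ldots, \eta_m$ with $m \leq k$ and positive weights $A_s = \sum_{\{i\,:\,e^{i\theta_i} = \eta_s\}} r_i^2 > 0$ satisfying $\sum_{s=1}^m A_s = \sum_i r_i^2 = 1$ together with $\sum_{s=1}^m A_s \eta_s^j = 0$ for $j = 1, \ldots, k$. The $m \times m$ submatrix $(\eta_s^j)_{1 \leq j, s \leq m}$ is a rescaled Vandermonde matrix with determinant $\prod_s \eta_s \prod_{s<t}(\eta_t - \eta_s) \neq 0$, so $A_1 = \cdots = A_m = 0$, contradicting positivity. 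Applied verbatim inside the open ball, the same argument shows that $\tP_k^{-1}(\underline 0_k) \cap \overline{D_k} = \{\underline 0_k\}$.

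\textbf{Degree computation.} The polar form yields the positive homogeneity $\tP_k(t \underline z_k) = t^2 \tP_k(\underline z_k)$ for all $t > 0$, so by the excision/homotopy properties of the Brouwer degree, $\deg(\tP_k, D_k, \underline 0_k)$ coincides with the degree of the radial map $\underline z_k \mapsto \tP_k(\underline z_k)/|\tP_k(\underline z_k)|$ from $\partial D_k \cong S^{2k-1}$ to $S^{2k-1}$. I would then pick a regular value $v^\ast = \tP_k(\underline c)/|\tP_k(\underline c)|$ for a tuple $\underline c = (c_1, \ldots, c_k)$ with $|c_i| = 1/\sqrt{k}$ and pairwise distinct arguments: the Vandermonde argument above guarantees that the preimage of $\tP_k(\underline c)$ in $\partial D_k$ is exactly the $S_k$-orbit of $\underline c$, yielding $k!$ distinct preimages. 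Because the permutation action of $S_k$ on $\C^k$ is orientation-preserving (each transposition of complex coordinates has real Jacobian $+1$) and $\tP_k$ is $S_k$-invariant, the local Jacobian sign of $\tP_k$ at each of these $k!$ points is the same; it is nonzero thanks to the classical invertibility of the underlying Vandermonde-type determinant in $\eta_1, \ldots, \eta_k$. Summing the common signs gives $|\deg(\tP_k, D_k, \underline 0_k)| = k! \neq 0$.

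\textbf{Main obstacle.} The principal technical difficulty is that $\tP_k$ is only $C^1$ (with degenerate Jacobian) at configurations where some $z_i = 0$, so the regular-value selection must avoid such critical configurations, and Sard's theorem together with the sign-counting formula cannot be applied naively on all of $\partial D_k$. This is handled by noting that the locus where some $z_i$ vanishes has real codimension at least two in $\partial D_k$, so generic points in $S^{2k-1}$ lie outside its image under the radial map; the standard Brouwer-degree calculation then proceeds on the smooth open locus $\{\underline z_k : z_i \neq 0 \ \forall i\}$. Alternatively, one may invoke the $S^1$-equivariance $\tP_k(e^{i\phi}\underline z_k)_j = e^{ij\phi}(\tP_k(\underline z_k))_j$ to descend to a map between compact quotient spaces, which gives a cleaner setting for the degree calculation.
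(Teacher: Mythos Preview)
The paper does not prove this lemma; it simply quotes it from \cite{BarDem}, Lemma~4.1, so there is no in-paper argument to compare against. Your non-vanishing step is correct and neatly argued via the Vandermonde determinant.

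The degree-computation step, however, has a genuine gap. First, the claim that $\tP_k^{-1}(\tP_k(\underline c))\cap\partial D_k$ equals the $S_k$-orbit of $\underline c$ does not follow from ``the Vandermonde argument above'' (which only treats the value $\underline 0_k$); to get injectivity of $\tP_k|_{\partial D_k}$ modulo $S_k$ one must additionally exploit that the weights $r_i^2$ are \emph{real}, so that conjugation supplies the moments of orders $-1,\ldots,-k$, and then a Vandermonde argument on the union of the two supports ($\le 2k$ atoms, versus $2k+1$ moment identities) applies. This omission is repairable. More seriously, you are counting preimages of $v^\ast=\tP_k(\underline c)/|\tP_k(\underline c)|$ under the \emph{radial} map $G=\tP_k/|\tP_k|$, and
\[
G^{-1}(v^\ast)=\bigl\{\underline z\in\partial D_k:\ \tP_k(\underline z)\in\R_{>0}\cdot\tP_k(\underline c)\bigr\},
\]
which is a priori strictly larger than the $S_k$-orbit of $\underline c$: nothing in your outline rules out further intersections of the ray $\R_{>0}\,\tP_k(\underline c)$ with $\tP_k(\partial D_k)$, possibly contributing preimages with cancelling local signs. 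The non-degeneracy of the real Jacobian of $\tP_k$ at $\underline c$ (needed for $v^\ast$ to be a regular value) is likewise asserted without verification. These issues are not cosmetic; closing them requires either a delicate analysis of the full ray-preimage for a carefully chosen $\underline c$, or---more robustly---an explicit admissible homotopy from $\tP_k$ to a map of known degree (for instance the holomorphic power-sum map $\underline z\mapsto(\sum_i z_i,\ldots,\sum_i z_i^k)$, which has degree $k!$), together with a proof that the homotopy does not vanish on $\partial D_k$.
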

Next we recall another useful result obtained in \cite{BarDem}, Lemma 3.3.
\begin{lemma}
\label{lemma:continuità}
Let $\ell\in\N$ and $\underline{\beta}_\ell\in\R^\ell$ such that $\beta_i>0$  for any $i\in\{1,\ldots,\ell\}$.
Suppose that $\underline{z}_\ell\in\C^\ell$ is a solution to
\[
\left\{
\begin{array}{l}
\beta_1z_1+\beta_2 z_2+\ldots+\beta_\ell z_\ell =y_1\\
\beta_1z_1^2+\beta_2 z_2^2+\ldots+\beta_\ell z_\ell^2=y_2\\
\qquad\qquad\ldots\ldots\\
\beta_1z_1^\ell+\beta_2 z_2^\ell+\ldots+\beta_\ell z_\ell^\ell=y_\ell
\end{array}
\right.
\]
where $\underline{y}_\ell\in\C^\ell$. Then $\underline{z}_\ell \to \underline{0}_\ell$ as
$\underline{y}_\ell\to \underline{0}_\ell$.
\end{lemma}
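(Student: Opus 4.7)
The plan is to argue by contradiction, combining a rescaling argument with a Vandermonde-type computation on the distinct values appearing in the limit. Suppose the conclusion fails. Then there exist $\delta>0$, a sequence $\underline{y}_\ell^{(n)}\to\underline{0}_\ell$ in $\C^\ell$, and corresponding solutions $\underline{z}_\ell^{(n)}=(z_1^{(n)},\ldots,z_\ell^{(n)})$ of the system such that $R_n:=\max_{1\le i\le\ell}|z_i^{(n)}|\ge\delta$ for every $n$. I would set $w_i^{(n)}:=z_i^{(n)}/R_n$, so that $\max_i|w_i^{(n)}|=1$. Dividing the $k$-th equation of the system by $R_n^k$ yields
\begin{equation*}
\sum_{i=1}^\ell \beta_i (w_i^{(n)})^k = \frac{y_k^{(n)}}{R_n^k},\qquad k=1,\ldots,\ell.
\end{equation*}
Since $R_n\ge\delta$, each right-hand side tends to $0$ as $n\to\infty$: if $R_n$ stays bounded this is immediate, while if $R_n\to\infty$ along a subsequence it still holds because the numerators are infinitesimal and the denominators diverge.

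By compactness of the closed unit polydisk in $\C^\ell$, I can extract a further subsequence with $\underline{w}_\ell^{(n)}\to\underline{w}_\ell^\ast$, where $\max_i|w_i^\ast|=1$; in particular $\underline{w}_\ell^\ast\ne\underline{0}_\ell$. Passing to the limit gives the homogeneous system
\begin{equation*}
\sum_{i=1}^\ell \beta_i (w_i^\ast)^k = 0,\qquad k=1,\ldots,\ell.
\end{equation*}
It remains to show that this forces $\underline{w}_\ell^\ast=\underline{0}_\ell$, which will yield the desired contradiction.

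To this end, let $\zeta_1,\ldots,\zeta_m$ be the pairwise distinct \emph{nonzero} values taken by the entries $w_1^\ast,\ldots,w_\ell^\ast$ (so $0\le m\le\ell$), and set $\gamma_j:=\sum_{i:\, w_i^\ast=\zeta_j}\beta_i$. The positivity hypothesis on $\underline{\beta}_\ell$ ensures $\gamma_j>0$ for every $j$. The homogeneous system rewrites as $\sum_{j=1}^m\gamma_j\zeta_j^k=0$ for $k=1,\ldots,\ell$. Retaining only the first $m$ of these equations (which is possible since $m\le\ell$), the coefficient matrix $(\zeta_j^k)_{1\le k,j\le m}$ has determinant
\begin{equation*}
\Bigl(\prod_{j=1}^m\zeta_j\Bigr)\prod_{1\le i<j\le m}(\zeta_j-\zeta_i),
\end{equation*}
which is nonzero by the Vandermonde formula, since the $\zeta_j$ are pairwise distinct and nonzero. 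Hence $\gamma_j=0$ for every $j$, a contradiction with $\gamma_j>0$ unless $m=0$; but $m=0$ forces $\underline{w}_\ell^\ast=\underline{0}_\ell$, contradicting $\max_i|w_i^\ast|=1$.

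The main obstacle is producing a nontrivial limit from a potentially unbounded sequence of solutions, since no \textit{a priori} bound on $\underline{z}_\ell^{(n)}$ is available; the rescaling by $R_n$ handles both the bounded and the unbounded cases uniformly. Once a normalized limit $\underline{w}_\ell^\ast\ne\underline{0}_\ell$ is produced, the Vandermonde step is essentially standard linear algebra.
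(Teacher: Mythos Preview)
Your proof is correct. The paper itself does not supply a proof of this lemma: it is simply quoted from \cite{BarDem}, Lemma~3.3, so there is nothing in the present paper to compare against. Your contradiction-and-rescaling argument, followed by the Vandermonde step on the distinct nonzero limit values, is a clean self-contained proof; the key point that the right-hand sides $y_k^{(n)}/R_n^k$ vanish follows immediately from the uniform lower bound $R_n\ge\delta$, and the grouping into distinct nonzero values $\zeta_j$ with strictly positive aggregate weights $\gamma_j$ correctly reduces the homogeneous limit system to an invertible Vandermonde system.
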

%%%%%%%%%%%%%%%%%%%%%%%%%%%%%%%%%%%%%%%%%%%%%%%%%%%%%%%%%%%%%%%%%%%%%%%%%%%%%%%%%%%%%%%%
%%%%%%%%%%%%%%%%%%%%%%%%%%%%%%%%%%%%%%%%%%%%%%%%%%%%%%%%%%%%%%%%%%%%%%%%%%%%%%%%%%%%%%%%
\section{Blow-up analysis and proof of Theorem~\ref{thm:mq}}
\label{sec:blow-up}
%%%%%%%%%%%%%%%%%%%%%%%%%%%%%%%%%%%%%%%%%%%%%%%%%%%%%%%%%%%%%%%%%%%%%%%%%%%%%%%%%%%%%%%%
%%%%%%%%%%%%%%%%%%%%%%%%%%%%%%%%%%%%%%%%%%%%%%%%%%%%%%%%%%%%%%%%%%%%%%%%%%%%%%%%%%%%%%%%

%In this section we establish the necessary blow-up results.
%The results could be derived adapting the blow-up analysis developed in \cite{RicZecprep}
%for the case of a manifold. However, we choose to develop an independent 
%derivation for the case of a domain, which turns out to be significantly simpler 
%in view of the maximum principle.
%More precisely, we establish the following.
%\begin{prop}
%\label{prop:Dirichletmassquantization}
%Assume \eqref{assumption:signP}.
%Let $(\lambda_n,\un)$ be a solution sequence to the problem~\ref{eq:Neribis}
%with $\lambda=\lambda_n\to\lambda_0$.
%Then, up to subsequences, exactly one of the following alternatives holds:
%\begin{enumerate}
%  \item[(i)] There exists a solution $u_0$ to equation~\ref{eq:Neribis}
% with $\lambda=\lambda_0$ such that $\un\to u_0$;
% \item[(ii)] There exists a finite set $\mathcal S=\{p_1,\ldots,p_k\}\subset\Om$
%such that $\un\to u_0$ in $W_0^{1,q}(\Om)$ for all $q\in[1,2)$, where
%\begin{equation*}
%\label{def:uoDirichlet}
%u_0(x)=8\pi\sum_{i=1}^kG_\Om(x,p_i).
%\end{equation*}
%and
%\[
%\lambdan\,\int_{[0,1]}\frac{\alpha e^{\alpha\un}\,\calP (d\alpha)}
%{\iint_{[0,1]\times\Om} e^{\alpha\un}\,\calP (d\alpha)dx}\,dx
%\stackrel{*}{\rightharpoonup}8\pi\sum_{j=1}^k\delta_{p_j}(dx),
%\]
%weakly in the sense of measures.
%\end{enumerate}
%\end{prop}
Let $\un$ be a concentrating sequence of solutions to $(*)_{\lambda_n}$.
In order to prove Theorem~\ref{thm:mq},
we define
the sequence of measures $\mu_n\in\mathcal M([0,1]\times\Omega)$ on the product space $[0,1]\times\Omega$:
\[
\mu_n(d\al dx):=\la_n\,\frac{e^{\al u_n(x)}}{\iint_{[0,1]\times\Omega}e^{\al u_n(x)}\,\mathcal P(d\al)dx}\,\Pda dx.
\]
Clearly, $\mu_n([0,1]\times\Omega)=\la_n$, therefore there exists a measure
$\mu\in\mathcal M([0,1]\times\Omega)$ such that, up to subsequences, $\mu_n\stackrel{\ast}{\rightharpoonup}\mu$
weakly in $\mathcal M([0,1]\times\Omega)$.
In view of the Brezis-Merle theory \cite{BM}, as adapted in Lemma~\ref{lem:BM}, 
there exists a finite set
$\mathcal S=\{p_1,p_2,\ldots,p_m\}\subset\Omega$ such that the singular part of $\mu$
is supported on $[0,1]\times\mathcal S$.
It follows that there exist $\zeta_i\in\mathcal M([0,1])$, $i=1,\ldots,m$,
and $r\in L^1([0,1]\times\Omega)$ such that the limit measure $\mu$ is of the form:
\begin{equation}
\label{eq:mu}
\mu(d\al dx)=\sum_{i=1}^m\zeta_i(d\al)\delta_{p_i}(dx)+r(\al,x)\,\Pda dx.
\end{equation}
With this notation, the main ingredient in the proof of Theorem~\ref{thm:mq} is the following result.
\begin{prop}
\label{prop:quantization}
Assume \eqref{assumpt:suppP}.
Let $\un$ be a sequence of solutions to $(*)_{\lambda_n}$
with $\la_n\to\la_0$ and suppose that $\mathcal S\neq\emptyset$.
Then, 
\begin{enumerate}
%\label{eq:zeta}
\item[(i)]
$\zeta_i(d\al)=8\pi\de_1(d\al)$,
for all $i=1,2,\ldots,m$.
\item[(ii)]
$r\equiv0$. 
\end{enumerate}
\end{prop}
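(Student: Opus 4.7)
The plan is to fix a blow-up point $p_i\in\mathcal S$ and carry out a blow-up analysis on the product space $I\times\Omega$, then close the argument with a Fatou-type lower bound on the denominator of $\mu_n$ to eliminate the residual part. The new ingredient compared to the non-degenerate setting of \cite{RiZeJDE} is a Laplace-type concentration that uses only $1\in\supp\calP$, not the stronger non-degeneracy \eqref{assumpt:nondeg}.

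Choose $r>0$ with $B_{2r}(p_i)\cap\mathcal S=\{p_i\}$, let $\xn\in\overline{B_r(p_i)}$ be a maximum point of $\un$ there, and set $M_n:=\un(\xn)$; by Lemma~\ref{lem:BM}, $M_n\to+\infty$ and $\xn\to p_i$. I would rescale via
\[
\en^2:=\frac{\iint_{I\times\Omega}e^{\alpha \un}\,\calP(d\alpha)dx}{\lambdan\int_I\alpha\, e^{\alpha M_n}\,\calP(d\alpha)},\qquad \vn(y):=\un(\xn+\en y)-M_n,
\]
so that $\vn(0)=0$, $\vn\le 0$, and a direct computation gives
\[
-\Delta \vn=\int_I e^{\alpha \vn}\,d\widehat{\mathcal P}_n(\alpha),\qquad \widehat{\mathcal P}_n(d\alpha):=\frac{\alpha\, e^{\alpha M_n}\,\calP(d\alpha)}{\int_I\alpha\, e^{\alpha M_n}\,\calP(d\alpha)}.
\]
The first key claim is the Laplace-type asymptotic $\widehat{\mathcal P}_n\stackrel{\ast}{\rightharpoonup}\delta_1$: for $\varphi\in C(I)$ supported in $[0,1-\delta]$, splitting $I$ and bounding the denominator from below via $\calP([1-\delta/2,1])>0$ (a consequence of \eqref{assumpt:suppP}) yields $|\widehat{\mathcal P}_n(\varphi)|\le C\|\varphi\|_\infty e^{-(\delta/2)M_n}\to 0$. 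Since $\vn\le 0$ gives $-\Delta\vn\le 1$, standard elliptic regularity together with $\vn(0)=0$ yields locally uniform $C^2$ bounds, so along a subsequence $\vn\to V$ in $C^2_{\mathrm{loc}}(\R^2)$, with $-\Delta V=e^V$, $V(0)=0$, $V\le 0$. The identity $\int_{B_{r/\en}(0)}(-\Delta\vn)\,dy=\nu_n(B_r(\xn))\le \lambdan$ forces both $\en\to 0$ and $\int_{\R^2}e^V<+\infty$, so the Chen--Li classification yields $V(y)=-2\log(1+|y|^2/8)$ and $\int_{\R^2}e^V=8\pi$.

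For (i), test $\mu_n$ against $\varphi(\alpha)\psi(x)$ with $\psi\in C_c(B_{2r}(p_i))$ and $\psi(p_i)=1$, and rescale near $p_i$. Using $e^{\alpha M_n}\calP(d\alpha)/\int_I\alpha\, e^{\alpha M_n}\calP(d\alpha)=\alpha^{-1}\widehat{\mathcal P}_n(d\alpha)$, a direct computation gives
\[
\iint\varphi\psi\,d\mu_n=\int_{B_{2r/\en}(0)}\psi(\xn+\en y)\int_I\frac{\varphi(\alpha)}{\alpha}\,e^{\alpha\vn(y)}\,d\widehat{\mathcal P}_n(\alpha)\,dy,
\]
and the convergences $\widehat{\mathcal P}_n\to\delta_1$, $\vn\to V$, $\en\to 0$, together with the integrability of $e^V$, yield the limit $8\pi\,\varphi(1)\psi(p_i)$. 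Letting $\psi$ shrink to $p_i$ forces $\zeta_i=8\pi\delta_1$; in particular $n_i=\int_I\alpha\,d\zeta_i(\alpha)=8\pi$. For (ii), the bound \eqref{stimau0} with $n_i=8\pi$ gives $u_0(x)\ge 4\log(1/|x-p_i|)+O(1)$ near $p_i$, so $\int_\Omega e^{\alpha u_0}\,dx=+\infty$ for every $\alpha>1/2$; since $\calP((1/2,1])>0$ by \eqref{assumpt:suppP}, Fatou's lemma implies $\iint_{I\times\Omega}e^{\alpha\un}\calP(d\alpha)dx\to+\infty$. Combining this divergence of the denominator with the local $L^\infty$ bound on $\un$ outside $\mathcal S$ (Lemma~\ref{lem:BM}) shows that $\mu_n\to 0$ locally uniformly on $I\times(\Omega\setminus\mathcal S)$, which gives $r\equiv 0$.

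The main obstacle is the degenerate case $\calP(\{1\})=0$: the scale $\en$ is no longer comparable to $e^{-M_n/2}$, its asymptotics depend non-trivially on the tail of $\calP$ at $\alpha=1$ (as hinted in the Appendix), and the rescaled profile could a priori fail to be tight. The key observation is that the arguments above rely only on the qualitative information $\calP([1-\delta,1])>0$ for every $\delta>0$, which is exactly what \eqref{assumpt:suppP} provides; the delicate work is showing that this suffices to force $\widehat{\mathcal P}_n\to\delta_1$ and to prevent $\vn$ from degenerating, so that a genuine Liouville bubble emerges in the limit.
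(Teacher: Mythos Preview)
Your approach is genuinely different from the paper's. For part~(i) the paper does \emph{not} rescale to a Liouville bubble. Instead it proves an $L^p$ estimate (Lemma~\ref{lem:fa}): for every $\eps>0$ the density $\la_n e^{\al u_n}/\In$ is bounded in $L^{(1-\eps/2)/(1-\eps)}\big([0,1-\eps)\times\Omega\big)$ uniformly in $n$. This immediately forces $\supp\zeta_i\subset\{1\}$, so $\zeta_i=n_i\delta_1$. Then the Pohozaev--type identity $8\pi\int_I\zeta_i=(\int_I\al\,\zeta_i)^2$ (Lemma~\ref{lem:identity}) gives $8\pi n_i=n_i^2$, hence $n_i=8\pi$. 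The rescaling you propose is relegated to the Appendix and is explicitly described there as ``not needed'' for the proof. Your argument for part~(ii) is essentially the same as the paper's.

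Your route has a real gap in part~(i). After rescaling, the integral
\[
\iint\varphi\psi\,d\mu_n=\int_{(\supp\psi-\xn)/\en}\psi(\xn+\en y)\int_I\frac{\varphi(\al)}{\al}\,e^{\al\vn(y)}\,d\widehat{\mathcal P}_n(\al)\,dy
\]
is taken over an \emph{expanding} domain, and the convergences $\vn\to V$, $\widehat{\mathcal P}_n\to\delta_1$ are only locally uniform. On any fixed ball $B_R(0)$ you indeed recover $\varphi(1)\int_{B_R}e^V$, and Fatou plus $\int e^V=8\pi$ give $\zeta_i\ge 8\pi\delta_1$ for nonnegative $\varphi$. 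But to get \emph{equality} you must show that no mass escapes to the ``neck'' annuli $B_{r}(p_i)\setminus B_{\en R}(\xn)$ as $R\to\infty$; nothing in your argument rules out secondary bubbles or residual neck mass. The phrase ``together with the integrability of $e^V$'' does not supply the required dominated-convergence or tightness bound. This is precisely the step where the paper's Pohozaev identity (Lemma~\ref{lem:identity}) does the work that a single-bubble extraction cannot. A smaller issue: the mass identity $\int_{B_{r/\en}}(-\Delta\vn)=\nu_n(B_r(\xn))\le\la_n$ does not by itself force $\en\to0$; you need instead to observe that if $\en\ge c>0$ then $-\Delta u_n\le\en^{-2}\le c^{-2}$ everywhere in $\Omega$, which would preclude concentration and contradict $\mathcal S\neq\emptyset$.
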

In order to establish Proposition~\ref{prop:quantization}, we first show that the measures $\zeta_i(d\alpha)$ are concentrated at $1$ (see Lemma \ref{lem:fa} below), namely that $\zeta_i(d\alpha)=n_i\delta_1(d\alpha)$ for some $n_i>0$. Next we provide a quadratic identity for the blow up measures $\zeta_i$ (see Lemma \ref{lem:identity} below), which will involve that $n_i=8\pi$ for any $i=1,\ldots,m$. In turn, even without the additional assumption $\mathcal{P}(\{1\})>0$, the argument outlined in Remark \ref{rem:s=0}, to show that $s\equiv0$ (where $s$ is defined in \eqref{eq:nuconv}), will allow us to conclude that $r\equiv0$.
\par
Let us state and prove two preliminary lemmas.\\
Fix $\eps>0$. For every $\alpha\in[0,1-\eps)$ we define for $x\in\Omega$
\[
f_\al(x):=\frac{\lambda e^{\alpha u}}{\iint_{I\times\Omega}e^{\alpha'u}\,\calP(d\alpha') dx}.
\]
\begin{lemma}
\label{lem:fa}
Assume \eqref{assumpt:suppP}.
For every $\alpha\in[0,1-\eps)$ the following estimate holds:
\[
\int_\Omega f_\al^{(1-\eps)/\al}\,dx\le\left(\frac{\lambda}{\calP([1-\eps,1])|\Omega|}\right)^{(1-\eps)/\al}|\Omega|.
\]
\end{lemma}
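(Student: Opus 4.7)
The plan is to unravel the definition of $f_\alpha$ and match the right-hand side by a single lower bound on the denominator $\iint_{I\times\Omega}e^{\alpha'u}\,\calP(d\alpha')dx$. Let $Z:=\iint_{I\times\Omega}e^{\alpha'u}\,\calP(d\alpha')dx$ and note first that, since the right-hand side of \ref{eq:Neribis} is nonnegative and $u=0$ on $\partial\Omega$, the maximum principle gives $u\ge 0$ in $\Omega$. In particular $e^{(1-\ee)u}\ge 1$ pointwise.

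Next I would exploit \eqref{assumpt:suppP}, which guarantees that $\calP([1-\ee,1])>0$ for every $\ee>0$, in order to estimate $Z$ from below. Because $u\ge 0$, for every $\alpha'\in[1-\ee,1]$ one has $e^{\alpha'u(x)}\ge e^{(1-\ee)u(x)}$, and therefore
\[
Z\;\ge\;\iint_{[1-\ee,1]\times\Omega}e^{\alpha'u}\,\calP(d\alpha')dx\;\ge\;\calP([1-\ee,1])\int_\Omega e^{(1-\ee)u}\,dx.
\]
Set $q:=(1-\ee)/\alpha$, which satisfies $q>1$ for $\alpha\in(0,1-\ee)$ (the case $\alpha=0$ is trivial, since then $f_\alpha$ is a constant and the exponent is interpreted in the limit). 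A direct calculation gives
\[
\int_\Omega f_\alpha^{q}\,dx\;=\;\Bigl(\frac{\lambda}{Z}\Bigr)^{q}\int_\Omega e^{(1-\ee)u}\,dx.
\]

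Finally I would combine these two displays. Writing $A:=|\Omega|^{-1}\int_\Omega e^{(1-\ee)u}dx$, the lower bound on $Z$ reads $Z/(\calP([1-\ee,1])|\Omega|)\ge A$, while the pointwise bound $u\ge 0$ gives $A\ge 1$. Since $q>1$, monotonicity of $t\mapsto t^q$ on $[1,\infty)$ yields $A\le A^{q}\le \bigl[Z/(\calP([1-\ee,1])|\Omega|)\bigr]^{q}$. Multiplying by $(\lambda/Z)^{q}|\Omega|$ and inserting into the previous identity produces
\[
\int_\Omega f_\alpha^{q}\,dx\;\le\;\Bigl(\frac{\lambda}{Z}\Bigr)^{q}\!\cdot\!\frac{Z^{q}}{\bigl(\calP([1-\ee,1])|\Omega|\bigr)^{q}}\cdot|\Omega|\;=\;\Bigl(\frac{\lambda}{\calP([1-\ee,1])|\Omega|}\Bigr)^{q}|\Omega|,
\]
which is the claim. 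No step looks particularly delicate; the only point that needs a moment's attention is the use of $A\ge 1$, which is where the nonnegativity coming from the maximum principle is essential, and the fact that \eqref{assumpt:suppP} ensures $\calP([1-\ee,1])>0$ so that the bound is not vacuous.
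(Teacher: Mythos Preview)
Your proof is correct and follows essentially the same route as the paper's: both expand $\int_\Omega f_\alpha^{q}$ as $(\lambda/Z)^q\int_\Omega e^{(1-\ee)u}$, use $u\ge0$ together with \eqref{assumpt:suppP} to obtain $Z\ge\calP([1-\ee,1])\int_\Omega e^{(1-\ee)u}$ and $\int_\Omega e^{(1-\ee)u}\ge|\Omega|$, and then exploit $q>1$ to pass from the average $A\ge1$ to $A\le A^q$. Your packaging via the normalized quantity $A$ is a bit cleaner than the paper's, which routes the same inequalities through the intermediate integral $\iint_{[1-\ee,1]\times\Omega}e^{\al'u}\calP(d\al')dx$, but the arguments are otherwise identical.
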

\begin{proof}
By definition,
\begin{align}
\label{eq:fa}
\int_\Omega f_\al^{(1-\eps)/\al}\,dx=\Big(\frac{\la}{\iint_{I\times\Omega}e^{\alpha'u}\,\calP(d\alpha') dx}\Big)^{(1-\eps)/\al}
\int_\Omega e^{(1-\eps)u}\,dx.
\end{align}
We observe that, since $u\ge0$ by the maximum principle,
we have $1\le e^{(1-\eps)u}\le e^{\al'u}$, and therefore
\begin{equation}
\label{est:fafirst}
\int_\Omega e^{(1-\eps)u}\,dx\le\frac{1}{\mathcal P([1-\eps,1])}
\iint_{[1-\eps,1]\times\Omega}e^{\al'u}\mathcal P(d\al')dx.
\end{equation}
We also obtain that
\begin{align*}
\iint_{[1-\eps,1]\times\Omega}e^{\al' u}\,\mathcal P(d\al')dx\ge\mathcal P([1-\eps,1])|\Omega|
\end{align*}
and consequently, recalling that $(1-\eps)/\al>1$,
\begin{equation*}
\begin{aligned}
&\left(\frac{1}{\mathcal P([1-\eps,1])|\Omega|}\iint_{[1-\eps,1]\times\Omega}e^{\al' u}\,\mathcal P(d\al')dx\right)^{(1-\eps)/\al}\\
&\qquad\qquad\qquad\qquad\qquad\qquad
\ge\;\frac{1}{\mathcal P([1-\eps,1])|\Omega|}\iint_{[1-\eps,1]\times\Omega}e^{\al' u}\,\mathcal P(d\al')dx.
\end{aligned}
\end{equation*}
In turn, we derive that
\begin{equation}
\label{est:fasecond}
\begin{aligned}
\left(\frac{1}{\iint_{[0,1]\times\Omega}e^{\al' u}\,\mathcal P(d\al')dx}\right)^{(1-\eps)/\al}
\le&\;\left(\frac{1}{\iint_{[1-\eps,1]\times\Omega}e^{\al' u}\,\mathcal P(d\al')dx}\right)^{(1-\eps)/\al}\\
\le&\;\frac{\left(\mathcal P([1-\eps,1])|\Omega|\right)^{1-\frac{1-\eps}{\al}}}{\iint_{[1-\eps,1]\times\Omega}e^{\al' u}\,\mathcal P(d\al')dx}.
\end{aligned}
\end{equation}
Inserting \eqref{est:fafirst} and \eqref{est:fasecond} into \eqref{eq:fa}, we obtain
\begin{align*}
\int_\Omega f_\al^{(1-\eps)/\al}\,dx
\le&\;\lambda^{(1-\eps)/\al}
\frac{(\mathcal P([1-\eps,1])|\Omega|)^{1-\frac{1-\eps}{\al}}}{\iint_{[1-\eps,1]\times\Omega}e^{\al'u}\,\mathcal P(d\al')dx}
\cdot\frac{1}{\mathcal P([1-\eps,1])}\iint_{[1-\eps,1]\times\Omega}e^{\al'u}\,\mathcal P(d\al')dx
\\
=&\left(\frac{\lambda}{\mathcal P([1-\eps,1])|\Omega|}\right)^{(1-\eps)/\al}|\Omega|,
\end{align*}
as asserted.
\end{proof}
\begin{lemma}
\label{lem:identity}
For every $i=1,\ldots,m$ the following identity holds:
\begin{equation}
\label{eq:identity}
8\pi\int_{[0,1]}\zeta_i(d\al)=\Big[\int_{[0,1]}\al\,\zeta_i(d\al)\Big]^2.
\end{equation}
\end{lemma}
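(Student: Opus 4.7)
The plan is to derive \eqref{eq:identity} from a Pohozaev-type identity applied to $u_n$ on a small disk centered at the blow-up point $p_i$, then pass to the limit first in $n$ and subsequently in the radius. Fix $i\in\{1,\ldots,m\}$ and choose $\rho_0>0$ small enough so that $\overline{B_{\rho_0}(p_i)}\cap\mathcal S=\{p_i\}$. For $\rho\in(0,\rho_0)$, I would multiply $(*)_{\lambda_n}$ by $(x-p_i)\cdot\nabla u_n$ and integrate by parts over $B_\rho(p_i)$ in the standard way, obtaining
\[
\int_{B_\rho(p_i)}g_n(x)\,(x-p_i)\cdot\nabla u_n\,dx=\frac{\rho}{2}\int_{\partial B_\rho(p_i)}\bigl[(\partial_\tau u_n)^2-(\partial_\nu u_n)^2\bigr]d\sigma,
\]
where $g_n:=\lambda_n Z_n^{-1}\int_I\alpha e^{\alpha u_n}\calP(d\alpha)$ and $Z_n:=\iint_{I\times\Om}e^{\alpha u_n}\calP(d\alpha)dx$. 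The key algebraic identity is that, for every $\alpha\ge 0$, one has $\alpha e^{\alpha u_n}(x-p_i)\cdot\nabla u_n=\dive\bigl((x-p_i)e^{\alpha u_n}\bigr)-2e^{\alpha u_n}$; integrating in $x$ and then against $\calP(d\alpha)$ rewrites the left-hand side as
\[
\int_{B_\rho(p_i)}g_n(x-p_i)\cdot\nabla u_n\,dx=\frac{\lambda_n}{Z_n}\Bigl[\rho\int_{\partial B_\rho(p_i)}F_n\,d\sigma-2\int_{B_\rho(p_i)}F_n\,dx\Bigr],
\]
with $F_n(x):=\int_I e^{\alpha u_n}\calP(d\alpha)$.

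Next I would pass to the limit $n\to\infty$. Concentration at $p_i$ forces $\max_\Om u_n\to+\infty$, and $1\in\supp\calP$ then implies $Z_n\to+\infty$; in particular $\lambda_n/Z_n\to 0$. Since $u_n\in L^\infty_{\mathrm{loc}}(\Om\setminus\mathcal S)$ by Lemma~\ref{lem:BM}, the sequence $g_n$ is uniformly bounded (in fact it tends to $0$ uniformly) on a neighborhood of $\partial B_\rho(p_i)$, and standard elliptic regularity then yields $u_n\to u_0$ in $C^1(\partial B_\rho(p_i))$ along a subsequence. The boundary contribution $\tfrac{\lambda_n}{Z_n}\rho\int_{\partial B_\rho(p_i)}F_n\,d\sigma$ vanishes in the limit; the volume contribution converges by \eqref{eq:mu} to $-2\bigl(\int_{[0,1]}\zeta_i(d\alpha)+\iint_{I\times B_\rho(p_i)}r\,\calP(d\alpha)dx\bigr)$; and the Pohozaev boundary term passes to $\tfrac{\rho}{2}\int_{\partial B_\rho(p_i)}\bigl[(\partial_\tau u_0)^2-(\partial_\nu u_0)^2\bigr]d\sigma$.

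It remains to let $\rho\to 0$. The regular part $\iint_{I\times B_\rho(p_i)}r\,\calP(d\alpha)dx$ drops out since $r\in L^1$. For the Pohozaev side, the fact that $g_n\to 0$ locally uniformly on $\Om\setminus\mathcal S$ implies that $u_0$ solves $-\Delta u_0=\sum_{j=1}^m n_j\delta_{p_j}$ in $\Om$ with $u_0|_{\partial\Om}=0$, where $n_j=\int_{[0,1]}\alpha\,\zeta_j(d\alpha)$ (this identification following from the equality $\int_I\alpha\,\mu_n(d\alpha,\cdot)=\nu_n$ and Lemma~\ref{lem:BM}). Hence $u_0=\sum_j n_j G(\cdot,p_j)$, and near $p_i$ one has $u_0(x)=\tfrac{n_i}{2\pi}\log\tfrac{1}{|x-p_i|}+\widetilde v(x)$ with $\widetilde v$ smooth. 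A straightforward expansion then gives $\tfrac{\rho}{2}\int_{\partial B_\rho(p_i)}\bigl[(\partial_\tau u_0)^2-(\partial_\nu u_0)^2\bigr]d\sigma\to -\tfrac{n_i^2}{4\pi}$ as $\rho\to 0$, the cross-terms being $O(\rho)$. Combining the two limits yields $-2\int_{[0,1]}\zeta_i(d\alpha)=-\tfrac{1}{4\pi}\bigl(\int_{[0,1]}\alpha\,\zeta_i(d\alpha)\bigr)^2$, which is \eqref{eq:identity}. The main technical obstacle I anticipate is the rigorous justification of the $C^1(\partial B_\rho(p_i))$-convergence and of the limit equation for $u_0$; both rely on propagating the $L^\infty_{\mathrm{loc}}$-bound of Lemma~\ref{lem:BM} through elliptic theory for $-\Delta u_n=g_n$, and crucially use $\lambda_n/Z_n\to 0$.
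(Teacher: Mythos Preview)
Your Pohozaev route is exactly the alternative approach the paper mentions (the paper itself does not prove the lemma but cites \cite{RZDIE}, where the identity is obtained via the Senba--Suzuki symmetry argument, and points to the Pohozaev identity as a second option). So the overall strategy is sound, and the algebra---multiplying by $(x-p_i)\cdot\nabla u_n$, rewriting $\alpha e^{\alpha u_n}(x-p_i)\cdot\nabla u_n$ as a divergence minus $2e^{\alpha u_n}$, and then reading off the quadratic relation after the two limits---is correct.

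The genuine gap is the step ``$1\in\supp\calP$ then implies $Z_n\to+\infty$''. You use this to conclude $g_n\to0$ locally uniformly on $\Omega\setminus\mathcal S$, hence $s\equiv0$ and $u_0=\sum_j n_j G(\cdot,p_j)$; this identification is what makes your $\rho\to0$ computation of the Pohozaev boundary term go through cleanly. But in the paper's logic, $Z_n\to+\infty$ is precisely \eqref{eq:integralblowup}, which is proved in Proposition~\ref{prop:quantization}(ii) \emph{using} Lemma~\ref{lem:identity} (via $n_i=8\pi$). In the non-degenerate case $\calP(\{1\})>0$ one can indeed get $Z_n\to+\infty$ directly from $n_i\ge4\pi$ and Fatou (this is Remark~\ref{rem:s=0}); but when $\calP(\{1\})=0$ the estimate $u_0\ge\tfrac{n_i}{2\pi}\log\tfrac{1}{|x-p_i|}-C$ with $n_i\ge4\pi$ only yields $e^{(1-\varepsilon)u_0}\gtrsim|x-p_i|^{-2(1-\varepsilon)}$, which is integrable for every $\varepsilon>0$, so the Fatou argument fails. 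Thus the implication you assert is not justified in the degenerate case that is the whole point of the paper.

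To repair the argument you must either (a) run the Pohozaev limit keeping the residual density $s$ (equivalently $R=\int_I r\,\calP(d\alpha)$) and show that the extra contributions $\rho\int_{\partial B_\rho}R$ and $\tfrac{\rho}{2}\int_{\partial B_\rho}|\nabla v_i|^2$ vanish along a suitable sequence $\rho_k\to0$---this requires some regularity of $s$ near $p_i$ that is not immediate from $s\in L^1$---or (b) bypass these issues entirely by using the Senba--Suzuki ``second moment'' symmetry argument of \cite{RZDIE}, which works directly at the level of the measure limit $\mu$ and does not need $s\equiv0$ or $Z_n\to\infty$ as input.
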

\begin{proof}
See \cite{RZDIE}, Theorem~2.2, where \eqref{eq:identity} is derived in a more general context
by using a symmetry argument introduced in \cite{SenbaSuzuki}.
Alternatively, \eqref{eq:identity} may be derived from the classical Pohozaev identity,
see, e.g., \cite{Lin}.
\end{proof}
Now we can prove Proposition~\ref{prop:quantization}.
\begin{proof}[Proof of Proposition~\ref{prop:quantization}]
Proof of (i).
In view of Lemma~\ref{lem:fa}, for any $\eps>0$ we have
\begin{align*}
\iint_{[0,1-\eps]\times\Omega}&\left(\frac{\lambda e^{\alpha u_n}}{\iint_{I\times\Omega} e^{\alpha'u_n}\calP(d\alpha') dx}\right)^{(1-\eps/2)/(1-\eps)}\,\Pda dx\\
&=\int_{[0,1-\eps]}\Pda\int_\Omega f_\al^{(1-\eps/2)/(1-\eps)}\,dx\\
&\le\int_{[0,1-\eps]}\Big(\int_\Omega f_\al^{(1-\eps/2)/\al}\,dx\Big)^{\al/(1-\eps)}|\Omega|^{1-\al/(1-\eps)}\,\Pda\\
&\le
\left\{\left(\frac{\la}{\calP([1-\eps,1])|\Omega|}\right)^{(1-\eps/2)/\al}|\Omega|\right\}^{\al/(1-\eps)}|\Omega|^{1-\al/(1-\eps)}\\
&\le
\left(\frac{\la}{\calP([1-\eps,1])|\Omega|}\right)^{(1-\eps/2)/(1-\eps)}|\Omega|.
\end{align*}
It follows that the sequence of functions
\[
\mu_n(\al,x)=\frac{\lambda e^{\alpha u_n}}{\iint_{I\times\Omega}
e^{\alpha'u_n}\calP(d\alpha') dx}
\]
is uniformly bounded in $L^{(1-\eps/2)/(1-\eps)}([0,1-\eps)\times\Omega)$.
Therefore, for all $i=1,\ldots,m$ we have $\left.\mu(d\al dx)\right\vert_{[0,1-\eps)\times\Omega}\equiv0$
and $\supp(\zeta_i)\subset[1-\eps,1]$
for any $\eps>0$. 
This implies that
$\zeta_i(d\al)=n_i\delta_1(d\al)$ for some $n_i>0$.
In turn, from Lemma \ref{lem:identity} we find 
$8\pi n_i=(n_i^2)$ and therefore
$n_i=8\pi$.
\par
Proof of (ii).
We have, for any sufficiently small $\eps>0$:
\begin{align*}
\iint_{\I\times\Omega}e^{\al\un}\,\Pda dx\ge\mathcal P([1-\eps,1])\int_{\Omega}e^{(1-\eps)\un}\,dx.
\end{align*}
On the other hand, up to subsequences, $u_n\to u_0$ in $W_0^{1,q}(\Omega)$ for any $q\in[1,2)$, 
where in view of \eqref{stimau0} and Part~(i) there holds
\[
u_0(x)\ge4\sum_{i=1}^m\left(\ln\frac{1}{|x-p_i|}+H(x,p_i)\right),
\qquad\hbox{in\ }\Omega\setminus\mathcal S.
\]
In particular,
\[
e^{(1-\eps)u_0(x)}\ge\prod_{i=1}^m\frac{c_0}{|x-p_i|^{4(1-\eps)}},
\]
and therefore $\int_\Omega e^{(1-\eps)u_0}=+\infty$.
Hence, by Fatou's lemma we conclude that 
\begin{equation}
\label{eq:integralblowup}
\iint_{\I\times\Omega}e^{\al u_n}\,\Pda dx\to+\infty
\end{equation}
along a blow-up sequence.
Since by Lemma \ref{lem:BM} $\un\in L^\infty_{\mathrm{loc}}(\Omega\setminus \mathcal S))$, \eqref{eq:integralblowup} 
implies $r\equiv0$ in $[0,1]\times\Omega$.
\end{proof}
\begin{proof}[Proof of Theorem~\ref{thm:mq}]
In view of Proposition~\ref{prop:quantization}, 
the limit \eqref{eq:Thm1} holds true.
We are only left to check 
the quantization property $\la_0\in8\pi\mathbb N$.
This fact readily follows from \eqref{eq:Thm1}.
\end{proof}
%%%%%%%%%%%%%%%%%%%%%%%%%%%%%%%%%%%%%%%%%%%%%%%%%%%%%%%%%%%%%%%%%%%%%%%%%%%%%%%%%%%%%%%%%%%%%%%%%%%%%%%%%%%%%%%%%%
%%%%%%%%%%%%%%%%%%%%%%%%%%%%%%%%%%%%%%%%%%%%%%%%%%%%%%%%%%%%%%%%%%%%%%%%%%%%%%%%%%%%%%%%%%%%%%%%%%%%%%%%%%%%%%%%%%
\section{The min-max scheme and the proof of Theorem~\ref{thm:main}}
\label{sec:var}
%%%%%%%%%%%%%%%%%%%%%%%%%%%%%%%%%%%%%%%%%%%%%%%%%%%%%%%%%%%%%%%%%%%%%%%%%%%%%%%%%%%%%%%%%%%%%%%%%%%%%%%%%%%%%%%%%%
%%%%%%%%%%%%%%%%%%%%%%%%%%%%%%%%%%%%%%%%%%%%%%%%%%%%%%%%%%%%%%%%%%%%%%%%%%%%%%%%%%%%%%%%%%%%%%%%%%%%%%%%%%%%%%%%%%
Finally, in this section we complete the proof of Theorem~\ref{thm:main}.
In view of assumption~\eqref{assumpt:Omega}
there exists a simple, closed, smooth, non-contractible 
curve $\Gamma\subset\Omega$.
Moreover, by the Jordan-Schoenflies Theorem~\cite{Schoenflies}, there exists a diffeomorphism 
\[
\chi:\R^2\to\C
\]
such that $\chi(\Gamma)=\partial B_1(0)$ and such that the bounded component of $\R^2\setminus\Gamma$ is mapped in $B_1(0)$. 
Clearly, there exists a point $z_0\in B_1(0)\subset\C$ and a radius $\rho>0$ such that $B_{2\rho}(z_0)\cap\chi(\Omega)=\emptyset$. 
Without loss of generality we may assume that $z_0=0$, so that in conclusion we have
\begin{equation}
\label{ipotesiOmega}
\partial B_1(0)=\chi(\Gamma)\subset\chi(\Omega),\qquad\mbox{while}\qquad B_{2\rho}(0)\cap\chi(\Omega)=\emptyset.
\end{equation}
Via $\chi$, we may also define a simple, regular parametrization of $\Gamma$: 
\begin{equation*}
%\label{gamma}
\gamma:[0,2\pi)\to\Gamma, \qquad \gamma(\theta)=\chi^{-1}(e^{i\theta}).
\end{equation*}
For $u\in H^1_0(\Omega)$ and $j\in\N$, let $m_j:H^1_0(\Omega)\to\C$ be defined by
\begin{equation*}
\label{m_j}
m_j(u)=\frac{\iint_{I\times\Omega}(\chi(x))^je^{\alpha u(x)}\,\mathcal{P}(d\alpha)dx}{\iint_{I\times\Omega} 
e^{\alpha' u(x)}\,\mathcal{P}(d\alpha')dx}=\int_\Omega (\chi(x))^j d\mu(u),
\end{equation*}
where
\[
d\mu(u)=\frac{\int_{I} e^{\alpha u(x)}\,\mathcal{P}(d\alpha)}{\iint_{I\times\Omega} e^{\alpha' u(x)}\,\mathcal{P}(d\alpha')dx}.
\]
For $k\in\N$, let $m: H^1_0(\Omega)\to \C^k$ be the vectorial map 
\[
m(u)=(m_1(u),m_2(u),\ldots,m_k(u)).
\]
We now define, for $k\in\N$, the class of functions which will be used in the min-max argument:
\begin{equation}
\label{Flambda}
\mathcal{F}_\lambda
=\left\{
h\in  \mathcal{C}(D_k,H_0^1(\Omega))\,|\,\begin{array}{ll}
(i)\;\;J_\lambda(h(\underline{z}_k))\to-\infty\;\;\;\mbox{as $\underline{z}_k\to\partial D_k$},\\
(ii)\:\:m\circ h\quad\mbox{can be extended continuously to $\overline{D_k}$}\\ 
(iii)\:m\circ h:\partial D_k\to \C^k \mbox{\quad has non zero degree}
\end{array}
\right\}.
\end{equation}
\begin{prop}
\label{prop:F_lambdanonvuoto}
Assume \eqref{assumpt:suppP}
and let $k\in\N$. For any $\lambda\in(8k\pi,8(k+1)\pi)$ the set $\mathcal{F}_\lambda$ is non-empty.
\end{prop}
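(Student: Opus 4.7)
The strategy is to construct an explicit family $\{h(\underline z_k)\}_{\underline z_k \in D_k} \subset H_0^1(\Omega)$ belonging to $\mathcal F_\lambda$, following the test-function scheme of \cite{BarDem}. The building blocks are translated and rescaled Liouville bubbles
\begin{equation*}
B_{\varepsilon, p}(x) := \frac{\varepsilon^2}{(\varepsilon^2 + |x - p|^2)^2}.
\end{equation*}
For $\underline z_k = (z_1, \ldots, z_k) \in D_k$ with $z_i = r_i e^{i\theta_i}$, set $p_i := \gamma(\theta_i) = \chi^{-1}(e^{i\theta_i}) \in \Gamma$ and choose a concentration scale $\varepsilon = \varepsilon(\underline z_k) > 0$ satisfying $\varepsilon \to 0$ as $|\underline z_k| \to 1$. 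A convenient choice (modulo the standard $H_0^1$ cut-off near $\partial \Omega$) is
\begin{equation*}
h(\underline z_k)(x) := \log\Bigl(\eta + \sum_{i=1}^k r_i^2\, B_{\varepsilon(\underline z_k), p_i}(x)\Bigr)
\end{equation*}
for a small fixed $\eta > 0$. The weight $r_i^2$ is tailored so that, in the limit $\underline z_k \to \partial D_k$, the $i$-th bubble contributes mass $r_i^2$ to the probability measure $d\mu(h(\underline z_k))$, and the fact that $\sum_i r_i^2 = 1$ on $\partial D_k$ ensures that these weights combine into a probability distribution.

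Condition (i): As $\underline z_k \to \partial D_k$, $\varepsilon \to 0$ and the $k$ bubbles simultaneously concentrate, yielding the standard Liouville estimates $\|\nabla h(\underline z_k)\|_2^2 = 16\pi k \log(1/\varepsilon) + O(1)$. Combining the scaling identity from the proof of Lemma~\ref{lem:TM} with the hypothesis $1 \in \supp \calP$, for any $\delta \in (0,1)$ one has
\begin{equation*}
J_\lambda(h(\underline z_k)) \leq \frac{1}{(1-\delta)^2}\, I_{\lambda(1-\delta)^2}\bigl((1-\delta) h(\underline z_k)\bigr) - \lambda \log \calP([1-\delta,1]).
\end{equation*}
Choosing $\delta$ small enough that $\lambda(1-\delta)^2 > 8k\pi$ and applying the classical bubble estimate for $I_{\lambda(1-\delta)^2}$, one obtains $J_\lambda(h(\underline z_k)) \to -\infty$ as $\underline z_k \to \partial D_k$.

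Conditions (ii)--(iii): A direct computation of the Liouville-bubble integrals shows that as $\underline z_k \to \bar{\underline z}_k \in \partial D_k$, the probability measure $d\mu(h(\underline z_k))$ converges weakly-$\ast$ in $\mathcal M(\Omega)$ to $\sum_{i=1}^k \bar r_i^2 \delta_{p_i}$. Hence $m \circ h$ admits a continuous extension to $\overline{D_k}$, and on $\partial D_k$,
\begin{equation*}
m_j(h(\underline z_k))\big|_{\partial D_k} = \sum_{i=1}^k r_i^2 \,\chi(p_i)^j = \sum_{i=1}^k r_i^2 e^{ij\theta_i},
\end{equation*}
which is exactly the $j$-th component of the Vandermonde-type map $\tP_k(\underline z_k)$ from \eqref{Psitilde}. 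Lemma~\ref{lemma:degree} then yields $\deg(m \circ h|_{\partial D_k}, \partial D_k, \underline 0_k) \neq 0$, giving (iii).

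The main obstacle is extending the classical Liouville-bubble mass computation from the non-degenerate setting $\calP = \delta_1$ to a general probability measure $\calP$ with merely $1 \in \supp \calP$. The denominator $\iint_{I\times\Omega} e^{\alpha h}\,\calP(d\alpha)\,dx$ depends delicately on the behavior of $\calP$ near $1$, and a naive estimate fails when $\calP(\{1\}) = 0$. However, the $L^p$-bound proved in Lemma~\ref{lem:fa} shows that, uniformly in $\underline z_k$, the contribution of $\alpha \in [0, 1-\varepsilon)$ to $d\mu(h(\underline z_k))$ is uniformly negligible; thus the limiting measure is entirely controlled by the $\alpha$ near $1$, which via $1 \in \supp \calP$ reduces to the standard Liouville-bubble analysis of \cite{BarDem}.
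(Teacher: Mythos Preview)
Your overall strategy coincides with the paper's: build $h:D_k\to H_0^1(\Omega)$ from weighted bubbles centered on $\Gamma$ so that, as $\underline z_k\to\partial D_k$, the mass of $d\mu(h(\underline z_k))$ concentrates at the $k$ points $\gamma(\theta_i)$ with weights $r_i^2$, whence $m\circ h|_{\partial D_k}=\tP_k$ and Lemma~\ref{lemma:degree} yields (iii). Two points in your execution, however, do not go through as written.

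First, the normalization $B_{\varepsilon,p}=\varepsilon^2/(\varepsilon^2+|x-p|^2)^2$ is the wrong one for property~(i): with the $\varepsilon^2$ in the numerator one has $\int_\Omega e^{h}\,dx=O(1)$, so the Dirichlet energy $\sim 32\pi k\log(1/\varepsilon)$ dominates and $I_\mu(h)\to+\infty$ rather than $-\infty$. Even after correcting this, the ``classical bubble estimate for $I_{\lambda(1-\delta)^2}$'' does not apply to $(1-\delta)h$ directly, since $(1-\delta)h$ is not a standard $k$-bubble; carrying the computation through gives the constraint $\lambda(1-2\delta)>8k\pi$, not $\lambda(1-\delta)^2>8k\pi$. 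Second, and more substantially, your appeal to Lemma~\ref{lem:fa} for the measure convergence is misplaced. That lemma furnishes an $L^p(\Omega)$ bound on the $\alpha$-slices $f_\alpha$ for $\alpha\in[0,1-\eps)$ and is used in Section~\ref{sec:blow-up} to locate the \emph{$\alpha$-support} of a blow-up limit; it says nothing about where the \emph{$x$-mass} of $d\mu(h)$ sits, which is what (ii)--(iii) require. What is actually needed is that $\iint_{I\times\Omega}e^{\alpha h}\,\Pda\,dx\to\infty$ while the contribution from $\Omega\setminus\cup_i B_r(p_i)$ stays bounded, and this must be verified by direct computation on the test functions.

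The paper handles both issues simultaneously by a device you do not use: it fixes $\tilde\alpha\in(3/4,1)$ with $(2\tilde\alpha-1)\lambda>8k\pi$ and builds the test function as $u_{r,\sigma}=\tilde\alpha^{-1}\log\bigl(\sum_i t_i e^{\tilde\alpha v_{r,\theta_i}}\bigr)$, with $v_{r,\theta}$ a truncated logarithm. The point of the $\tilde\alpha$ is that $e^{\tilde\alpha u_{r,\sigma}}=\sum_i t_i e^{\tilde\alpha v_{r,\theta_i}}$ is \emph{linear} in the individual bubbles, so both $\int_\Omega e^{\tilde\alpha u_{r,\sigma}}$ and the lower bound $\log\iint_{I\times\Omega}e^{\alpha u_{r,\sigma}}\ge\log\calP([\tilde\alpha,1])+\log\int_\Omega e^{\tilde\alpha u_{r,\sigma}}$ are computable in closed form. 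This yields (i) directly and, via the explicit $A$--$B$--$C$ decomposition in the proof of Lemma~\ref{lemma:exappendix}, also the concentration $d\mu(u_{r,\sigma})\stackrel{\ast}{\rightharpoonup}\sigma$ needed for (ii)--(iii).
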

In order to prove Proposition~\ref{prop:F_lambdanonvuoto}, we define a suitable test function.
Let $\varepsilon_0>0$ such that $B_{\varepsilon_0}(\gamma(\theta))\subset\Omega$ for any $\theta\in[0,2\pi)$ and for $(r,\theta)\in[0,1)\times[0,2\pi)$
let 
\begin{equation*}
\label{def:v}
v_{r,\theta}(x)=\left\{
\begin{array}{ll}
0 & \mbox{if $x\in\Omega\setminus B_{\varepsilon_0}(\gamma(\theta))$}\\
4\log(\frac{\varepsilon_0}{|x-\gamma(\theta)|}) & \mbox{if $x\in B_{\varepsilon_0}(\gamma(\theta))\setminus B_{\varepsilon_0(1-r)}(\gamma(\theta))$}\\
4\log(\frac{1}{1-r}) & \mbox{if $x\in B_{\varepsilon_0(1-r)}(\gamma(\theta))$.}
\end{array}
\right.
\end{equation*}
Let us consider, for $k\in\N$, the following family of probability measures, known in the literature as the set of formal barycenters of $\Gamma$ of order $k$:
\[
\Gamma_k:=\{\sum_{i=1}^k t_i\delta_{\gamma(\theta_i)}\,:\,t_i\in[0,1],\;\sum_{i=1}^k t_i=1,\; \theta_i\in[0,2\pi)\}.
\]
Let us fix $\tilde\al\in I$ satisfying
\begin{equation}
\label{cond:tildeal}
\tilde\alpha\in(\frac34,1)\qquad \hbox{and}\qquad
(2\tilde\alpha-1)\lambda>8k\pi.
\end{equation}
%$\tilde\alpha\in(\frac34,1)$ such that $(2\tilde\alpha-1)\lambda>8k\pi$. 
Then, given $\sigma\in \Gamma_k$, $\sigma=\sum_{i=1}^k t_i\delta_{\gamma(\theta_i)}$ and $r\in[0,1)$, we define the function $u_{r,\sigma}\in H^1_0(\Omega)$ by
\begin{equation}\label{ursigma}
u_{r,\sigma}(x)=\frac{1}{\tilde\alpha}\log\left(\sum_{i=1}^{k}t_i e^{\tilde\alpha v_{r,\theta_i}}\right).
\end{equation}
It is readily checked that $u_{r,\sigma}$ depends continuously on $r\in[0,1)$ and $\sigma\in\Gamma_k$.\\
%As we will prove in Lemma \ref{lemma:exappendix} below, for $\sigma\in\Gamma_k$, $\lambda>8k\pi$ we have
%\begin{equation}
%\label{i}
%J_\lambda(u_{r,\sigma})\longrightarrow-\infty\mbox{\qquad as $r\to1$, \ \ uniformly for $\sigma\in\Gamma_k$,}
%\end{equation}
%
%\begin{equation}
%\label{ii}
%d\mu(u_{r,\sigma})=\frac{\int_{I}e^{\alpha u_{r,\sigma}}\mathcal{P}(d\alpha)}{\iint_{I\times\Omega}e^{\alpha' u_{r,\sigma}}\mathcal{P}(d\alpha')dx}\rightharpoonup\sigma\mbox{\qquad as $r\to1$, \ \ uniformly for $\sigma\in\Gamma_k$.}
%\end{equation}
%
\begin{lemma}
\label{lemma:exappendix}
Assume \eqref{assumpt:suppP}.
For $\lambda\in(8k\pi,8(k+1)\pi)$, $r\in[0,1)$ and $\sigma\in\Gamma_k$, then
\begin{equation}
\label{app1}
J_\lambda(u_{r,\sigma})\longrightarrow-\infty\mbox{\qquad as $r\to1$, \ \ uniformly for $\sigma\in\Gamma_k$.}
\end{equation}
and
\begin{equation}
\label{app2}
d\mu(u_{r,\sigma})=\frac{\int_{I}e^{\alpha u_{r,\sigma}}\mathcal{P}(d\alpha)}{\iint_{I\times\Omega}e^{\alpha u_{r,\sigma}}\mathcal{P}(d\alpha)dx}
\stackrel{\ast}{\rightharpoonup}\sigma\mbox{\qquad as $r\to1$, \ \ uniformly for $\sigma\in\Gamma_k$,}
\end{equation}
where the function $u_{r,\sigma}$ is defined in \eqref{ursigma} with $\tilde\al$ satisfying \eqref{cond:tildeal}.
\end{lemma}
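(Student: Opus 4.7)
For \eqref{app1} the plan is to bound the two terms of $J_\lambda(u_{r,\sigma})$ separately and then exploit condition~\eqref{cond:tildeal} to force divergence to $-\infty$. The Dirichlet energy is controlled through the log-sum-exp structure: differentiating gives $\nabla u_{r,\sigma}=\sum_{i=1}^k p_i\nabla v_{r,\theta_i}$ with $p_i:=t_i e^{\tilde\alpha v_{r,\theta_i}}/\bigl(\sum_\ell t_\ell e^{\tilde\alpha v_{r,\theta_\ell}}\bigr)\in[0,1]$ and $\sum_i p_i=1$, so by pointwise Cauchy--Schwarz $|\nabla u_{r,\sigma}|^2\le\sum_i p_i|\nabla v_{r,\theta_i}|^2\le\sum_i|\nabla v_{r,\theta_i}|^2$, and a direct polar integration of each truncated logarithm gives $\int_\Omega|\nabla v_{r,\theta}|^2\,dx=32\pi\log\frac{1}{1-r}$, hence $\|\nabla u_{r,\sigma}\|_2^2\le 32\pi k\log\frac{1}{1-r}$ uniformly in $\sigma\in\Gamma_k$. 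For the exponential term the crucial identity $e^{\tilde\alpha u_{r,\sigma}}=\sum_i t_i e^{\tilde\alpha v_{r,\theta_i}}$ is linear in the weights $t_i$, and combined with $u_{r,\sigma}\ge 0$ and $\mathcal P([\tilde\alpha,1])>0$ (since $1\in\supp\mathcal P$ and $\tilde\alpha<1$) it yields
\[
\iint_{I\times\Omega}e^{\alpha u_{r,\sigma}}\,\mathcal P(d\alpha)\,dx\ge\mathcal P([\tilde\alpha,1])\sum_{i=1}^k t_i\int_\Omega e^{\tilde\alpha v_{r,\theta_i}}\,dx\ge c_0(1-r)^{2-4\tilde\alpha},
\]
where $c_0>0$ comes from restricting each bubble integral to the inner disk $B_{\varepsilon_0(1-r)}(\gamma(\theta_i))$, where $v_{r,\theta_i}\equiv 4\log\frac{1}{1-r}$, and using $\sum_i t_i=1$. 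Combining the two estimates,
\[
J_\lambda(u_{r,\sigma})\le \bigl[16\pi k-2\lambda(2\tilde\alpha-1)\bigr]\log\tfrac{1}{1-r}+C,
\]
with $C$ independent of $\sigma$, and the coefficient is strictly negative by \eqref{cond:tildeal}, proving \eqref{app1} uniformly in $\sigma$.

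For \eqref{app2} the plan is to split the argument into concentration and weight identification. Since $u_{r,\sigma}\equiv 0$ on $\Omega\setminus\bigcup_i B_{\varepsilon_0}(\gamma(\theta_i))$, the integrand $e^{\alpha u_{r,\sigma}}$ is identically $1$ there and contributes only $O(1)$ to the normalizing mass, whereas inside the bubble cores the mass diverges by the lower bound above; hence $d\mu(u_{r,\sigma})$ gives vanishing mass to the complement of any fixed $\delta$-neighborhood of $\supp(\sigma)=\{\gamma(\theta_i):t_i>0\}$, uniformly in $\sigma$. To identify the limiting weights I would again exploit the decomposition $e^{\tilde\alpha u_{r,\sigma}}=\sum_i t_i e^{\tilde\alpha v_{r,\theta_i}}$: each normalized bubble $e^{\tilde\alpha v_{r,\theta_i}}\,dx/\int_\Omega e^{\tilde\alpha v_{r,\theta_i}}\,dy$ is a probability measure that concentrates weakly at $\gamma(\theta_i)$ as $r\to 1$, and the normalization $\int_\Omega e^{\tilde\alpha v_{r,\theta_i}}\,dy$ is independent of $\theta_i$ up to uniformly negligible boundary corrections, so for the slice $\alpha=\tilde\alpha$ the measure $\frac{e^{\tilde\alpha u_{r,\sigma}}}{\int_\Omega e^{\tilde\alpha u_{r,\sigma}}\,dy}\,dx$ converges weakly to $\sum_i t_i\delta_{\gamma(\theta_i)}=\sigma$.

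The main obstacle is that $d\mu(u_{r,\sigma})$ is an $\alpha$-averaged measure against $\mathcal P$ rather than the single clean slice $\alpha=\tilde\alpha$, and the identification of the weights must be transferred to this average without assuming $\mathcal P(\{1\})>0$. My plan is to split $I=[0,\tilde\alpha]\cup(\tilde\alpha,1]$ and to exploit that on $[0,\tilde\alpha]$ the integrand $e^{\alpha u_{r,\sigma}}$ is dominated by $e^{\tilde\alpha u_{r,\sigma}}$, so it inherits the correct decomposition, while on $(\tilde\alpha,1]$ the explicit formulas for $v_{r,\theta}$ on the inner disk and on the annulus let one compare each slice with the $\alpha=\tilde\alpha$ one after normalizing by the divergent factor $(1-r)^{2-4\alpha}$, thus pushing the clean decomposition through the $\mathcal P$-integration. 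Uniformity in $\sigma\in\Gamma_k$ is preserved throughout because every constant depends only on $\tilde\alpha$, $\lambda$, $k$, $\varepsilon_0$ and $\mathcal P$ (and not on the parameters $(t_i,\theta_i)$), together with the continuous dependence of $u_{r,\sigma}$ on $\sigma$ stated just after \eqref{ursigma}.
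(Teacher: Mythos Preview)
For \eqref{app1} your argument is correct and parallels the paper's proof. Your gradient bound via the pointwise convexity inequality $|\nabla u_{r,\sigma}|^2\le\sum_i p_i|\nabla v_{r,\theta_i}|^2\le\sum_i|\nabla v_{r,\theta_i}|^2$ is in fact a slicker route to the same $32\pi k\log\frac{1}{1-r}$ bound than the paper's, which instead uses the pointwise estimate $|\nabla u_{r,\sigma}(x)|\le 4/\min_i|x-\gamma(\theta_i)|$ outside the inner disks and then a Voronoi-type partition of $\Omega$ into regions $A_i=\{x:|x-\gamma(\theta_i)|=\min_j|x-\gamma(\theta_j)|\}$. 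Your lower bound for the logarithmic term (restrict to $\alpha\in[\tilde\alpha,1]$, use $\mathcal P([\tilde\alpha,1])>0$ and the inner-disk contribution) matches the paper's exactly.

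For \eqref{app2} the paper's own proof is more modest than your plan: it only establishes the concentration statement you describe first, namely that the $d\mu(u_{r,\sigma})$-mass of $\cup_{i=1}^m B_\varepsilon(\gamma(\theta_i))$ tends to $1$ uniformly in $\sigma$, by writing this mass as $(A+B)/(A+C)$ and showing $A\to\infty$ while $B,C=O(1)$. The paper asserts without further argument that this concentration suffices for \eqref{app2}; it does \emph{not} carry out an explicit identification of the limiting weight at each individual $\gamma(\theta_i)$. Your concentration argument is in the same spirit and is fine.

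Your further attempt to pin down the limiting weights as $t_i$ has a genuine gap. Near an isolated $\gamma(\theta_i)$ (bubble supports disjoint) one has $u_{r,\sigma}=v_{r,\theta_i}+\tilde\alpha^{-1}\log t_i+o(1)$ on the concentrating core, so for each fixed $\alpha$ the slice satisfies $e^{\alpha u_{r,\sigma}}\approx t_i^{\alpha/\tilde\alpha}e^{\alpha v_{r,\theta_i}}$: its contribution near $\gamma(\theta_i)$ is proportional to $t_i^{\alpha/\tilde\alpha}$, not to $t_i$. Since each $\alpha$-slice diverges like $(1-r)^{2-4\alpha}$, the $\mathcal P$-average is dominated by $\alpha$ close to $1$, and a Laplace-type computation gives limiting weights $t_i^{1/\tilde\alpha}/\sum_j t_j^{1/\tilde\alpha}$ rather than $t_i$. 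Hence neither the domination $e^{\alpha u}\le e^{\tilde\alpha u}$ on $[0,\tilde\alpha]$ (which controls size, not proportions) nor the explicit slice comparison on $(\tilde\alpha,1]$ can ``push the clean decomposition through'' to recover the weights $t_i$. If you want to align with the paper, simply stop at the concentration step, which is all the paper actually proves.
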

\begin{proof}
Recalling that 
\[
J_\lambda(u_{r,\sigma})=\frac{1}{2}\int_\Omega |\nabla u_{r,\sigma}|^2 dx-\lambda\log\left(\iint_{I\times\Omega} e^{\alpha u_{r,\sigma}}\mathcal{P}(d\alpha)dx\right),
\]
property~\eqref{app1} will follow from the following two estimates:
\begin{equation}
\label{stimegradiente}
\int_\Omega |\nabla u_{r,\sigma}|^2 dx\leq 32k\pi\log\frac{1}{1-r}+O(1),
\end{equation}
\begin{equation}
\label{stimelogaritmo}
\log\left(\iint_{I\times\Omega} e^{\alpha u_{r,\sigma}}\Pda dx\right)
=(4\tilde\alpha-2)\log\frac{1}{1-r}+\log(\mathcal{P}([\tilde{\alpha},1]))+O(1).
\end{equation}
We note that $\mathcal{P}([\tilde{\alpha},1])>0$ in view of assumption~\ref{assumpt:suppP}.
Indeed, from \eqref{stimegradiente}--\eqref{stimelogaritmo} and recalling \eqref{cond:tildeal},
it follows that
\[
J_\lambda(u_{r,\sigma})\leq2(8k\pi-(2\tilde\alpha-1)\lambda)\log\frac{1}{1-r}-\lambda\log(\mathcal{P}([\tilde{\alpha},1]))+O(1)\to-\infty
\qquad\quad\mbox{as $r\to1$.}
\]
Proof of \eqref{stimegradiente}.
By definition of $v_{r,\theta_i},u_{r,\sigma}$ as in \eqref{def:v}--\eqref{ursigma}, we have
\[
\nabla u_{r,\sigma}(x)=\frac{\sum_{i=1}^kt_i e^{\tilde\alpha v_{r,\theta_i}(x)}\nabla v_{r,\theta_i}(x)}{\sum_{i=1}^kt_i e^{\tilde\alpha v_{r,\theta_i}(x)}},
\]
and
\[
|\nabla v_{r,\theta_i}|=\left\{\begin{array}{ll}
\frac{4}{|x-\gamma(\theta_i)|}& x\in B_{\varepsilon_0}(\gamma(\theta_i))\setminus B_{\varepsilon_0(1-r)}(\gamma(\theta_i))\\
0 &\mbox{otherwise.}
\end{array}\right.
\]
Therefore, if $x\in\Omega\setminus\cup_{j=1}^kB_{\varepsilon_0(1-r)}(\gamma(\theta_j))$, then
\begin{equation*}
\label{star}
\abs{\nabla u_{r,\sigma}(x)}\leq\frac{\sum_{i=1}^kt_i e^{\tilde\alpha v_{r,\theta_i}(x)}\frac{4}{|x-\gamma(\theta_i)|}}{\sum_{i=1}^k
t_i e^{\tilde\alpha v_{r,\theta_i}(x)}}\leq\frac{4}{\min_{i=1,2,\ldots,k}|x-\gamma(\theta_i)|}.
\end{equation*}
From the estimate above we also deduce that, for any $x\in\Omega$,
\begin{equation*}
\label{starstar}
\abs{\nabla u_{r,\sigma}(x)}\leq\frac{4}{\varepsilon_0(1-r)}.
\end{equation*}
Then setting $A_i=\{y\in\Omega\,:\,|y-\gamma(\theta_i)|=\min_j|y-\gamma(\theta_j)|\}$, we have
\begin{equation*}
\begin{aligned}
\int_\Omega|\nabla u_{r,\sigma}(x)|^2dx\stackrel{\eqref{star}}{\leq}&\sum_{i=1}^k\int_{B_{\varepsilon_0(1-r)}(\gamma(\theta_i))}|\nabla  u_{r,\sigma}(x)|^2\,dx
+\int_{\Omega\setminus\cup_{j=1}^k B_{\varepsilon_0(1-r)}(\gamma(\theta_j))}\frac{16}{(\min_i|x-\gamma(\theta_i)|)^2}dx\\
\stackrel{\eqref{starstar}}{\leq}&O(1)+\sum_{i=1}^k\int_{A_i\setminus \cup_{j=1}^k B_{\varepsilon_0(1-r)}(\gamma(\theta_j))}\frac{16}{|x-\gamma(\theta_i)|^2}\,dx\\
\leq& O(1)+\sum_{i=1}^k \int_{A_i\setminus B_{\varepsilon_0(1-r)}(\gamma(\theta_i))}\frac{16}{|x-\gamma(\theta_i)|^2}\,dx\\
\leq &O(1)+16k \int_{B_{\diam(\Omega)}(\gamma(\theta_i))\setminus B_{\varepsilon_0(1-r)}(\gamma(\theta_i))}\frac{dx}{|x-\gamma(\theta_i)|^2}\\
\leq &O(1)+32k\pi\log\frac{1}{1-r},
\end{aligned}
\end{equation*}
so that \eqref{stimegradiente} is proved.
\par
Proof of \eqref{stimelogaritmo}.
It is readily checked that
\begin{equation}
\label{stima1}
\begin{aligned}
\log\left(\iint_{I\times\Omega}e^{\alpha u_{r,\sigma}}\mathcal{P}(d\alpha)dx\right)
\geq&\log\left(\iint_{[\tilde\alpha,1]\times\Omega}e^{\tilde\alpha u_{r,\sigma}}\mathcal{P}(d\alpha)dx\right)\\
=&\log\left(\int_\Omega e^{\tilde\alpha u_{r,\sigma}}dx\right)+\log(\mathcal{P}([\tilde\alpha,1])).
\end{aligned}
\end{equation}
Then, recalling that in view of \eqref{cond:tildeal} we have $\tilde\alpha>\frac12$,
\begin{eqnarray}
\label{stima2}
\int_{\Omega} e^{\tilde\alpha u_{r,\sigma}}dx&=&\int_\Omega \sum_{i=1}^k t_i e^{\tilde\alpha v_{r,\theta_i}}dx\nonumber\\
&=&\sum_{i=1}^k t_i\left[\int_{B_{\varepsilon_0(1-r)}(\gamma(\theta_i))}\frac{dx}{(1-r)^{4\tilde\alpha}}+\int_{B_{\varepsilon_0}(\gamma(\theta_i))\setminus B_{\varepsilon_0(1-r)}(\gamma(\theta_i))}\left(\frac{\varepsilon_0}{|x-\gamma(\theta_i)|}\right)^{4\tilde\alpha}dx\right.\nonumber\\
& &\left.\qquad+\int_{\Omega\setminus B_{\varepsilon_0}(\gamma(\theta_i))}dx\right]\nonumber\\
&=&\sum_{i=1}^k t_i \left[\pi\varepsilon_0^2(1-r)^{2-4\tilde\alpha}+2\pi\varepsilon_0^{4\tilde\alpha}\int_{\varepsilon_0(1-r)}^{\varepsilon_0}\frac{d\rho}{\rho^{4\tilde\alpha-1}}+|\Omega|-\pi\varepsilon_0^2\right]\nonumber\\
&=&\frac{C}{(1-r)^{4\tilde\alpha-2}}+O(1).
\end{eqnarray}
Finally, combining \eqref{stima1} and \eqref{stima2} we obtain \eqref{stimelogaritmo}. Hence, \eqref{app1} is completely established.
\par
Proof of \eqref{app2}.
Let $\varepsilon_0>0$ be such that $B_{\varepsilon_0}(\gamma(\theta))\subset\Omega$ for any $\theta\in[0,2\pi)$ 
and let $\sigma=\sum_{i=1}^k t_i\delta_{\gamma(\theta_i)}$. 
Without loss of generality we may assume that there exists $m=m(\sigma)\leq k$ such that $t_i>0$ for any $i=1,\ldots,m$ and $t_i=0$ for $i>m$.
\par
In order to prove \eqref{app2} it suffices to show that for every $\varepsilon\in(0,\varepsilon_0)$
\begin{equation}
\label{fragola}
\lim_{r\to1}\int\limits_{\cup_{i=1}^m B_{\varepsilon}(\gamma(\theta_i))}d\mu(u_{r,\sigma})=1\qquad\quad\mbox{uniformly with respect to $\sigma\in\Gamma_k$.}
\end{equation}
Let us fix $\varepsilon\in(0,\varepsilon_0)$, let $\delta=\delta(\varepsilon)\in(0,1)$ such that $B_\delta(0)\subset\varphi_\theta(B_\varepsilon(\gamma(\theta)))$, where \linebreak$\varphi_\theta(x)=\frac{x-\gamma(\theta)}{\varepsilon_0}$.\\
We write
\begin{equation}
\label{ABC}
\int\limits_{\cup_{i=1}^m B_{\varepsilon}(\gamma(\theta_i))}d\mu(u_{r,\sigma})=\frac{A+B}{A+C}
\end{equation}
where 
\[
A=\iint_{I\times \cup_{i=1}^m \varphi^{-1}_{\theta_i}(B_\delta(0))}e^{\alpha u_{r,\sigma}}\mathcal{P}(d\alpha)dx,
\]
and 
\[
\begin{aligned}
B=&\iint_{I\times (\cup_{i=1}^m (B_{\varepsilon}(\gamma(\theta_i))\setminus\varphi^{-1}_{\theta_i}(B_\delta(0))))}e^{\alpha u_{r,\sigma}}\mathcal{P}(d\alpha)dx,\\
C=&\iint_{I\times ( \Omega\setminus\cup_{i=1}^m\varphi^{-1}_{\theta_i}(B_\delta(0)))}e^{\alpha u_{r,\sigma}}\mathcal{P}(d\alpha)dx.
\end{aligned}
\]
We claim that, as $r\to1$,
\begin{equation}
\label{est:ABC}
A\to+\infty,\qquad B=O(1),\qquad C=O(1).
\end{equation}
In view of assumption~\eqref{assumpt:suppP}, we have $\calP([\tilde\alpha,1])>0$. Then, for any $r>1-\delta$ we have
\begin{eqnarray*}
A&\geq&\int_{\tilde\alpha}^1\int_{\cup_{i=1}^m \varphi^{-1}_{\theta_i}(B_\delta(0))}e^{\alpha u_{r,\sigma}}\mathcal{P}(d\alpha)dx\\
&\geq& \mathcal P([\tilde\alpha,1])\int_{\cup_{i=1}^m \varphi^{-1}_{\theta_i}(B_\delta(0))}e^{\tilde\alpha u_{r,\sigma}}dx\\
&=&\mathcal P([\tilde\alpha,1])\int_{\cup_{i=1}^m \varphi^{-1}_{\theta_i}(B_\delta(0))}\sum_{j=1}^m t_j e^{\tilde\alpha v_{r,\theta_j}}dx\\
&\geq&\mathcal P([\tilde\alpha,1])\sum_{j=1}^m t_j\int_{\varphi^{-1}_{\theta_j}(B_\delta(0))}e^{\tilde\alpha v_{r,\theta_j}}dx\\
&=&\varepsilon_0^2\mathcal P([\tilde\alpha,1])\sum_{j=1}^m t_j\left[\int_{B_{1-r}(0)}\frac{dy}{(1-r)^{4\tilde\alpha}}+\int_{B_\delta(0)\setminus B_{1-r}(0)}\frac{dy}{|y|^{4\tilde\alpha}}\right]\\
&=&\pi\varepsilon^2_0\mathcal P([\tilde\alpha,1])\left[(1-r)^{2-4\tilde\alpha}+\frac{1}{2\tilde\alpha-1}((1-r)^{2-4\tilde\alpha}-\delta^{2-4\tilde\alpha})\right]\to+\infty\qquad\mbox{uniformly for $\sigma\in\Gamma_k$.}
\end{eqnarray*}
In the last line we have used that $\sum_{j=1}^m t_j=1$ and that $\tilde\alpha>\frac12$.\\
Moreover
\begin{eqnarray*}
0\leq B\leq C&\leq&|\Omega|+\int_{\cup_{i=1}^m(B_{\varepsilon_0}(\gamma(\theta_i))\setminus \varphi^{-1}_{\theta_i}(B_\delta(0)))}
(\sum_{j=1}^m t_j e^{\tilde\alpha v_{r,\theta_j}})^{1/\tilde\alpha}\,dx\\
&\leq&|\Omega|+\int_{\cup_{i=1}^m(B_{\varepsilon_0}(\gamma(\theta_i))\setminus \varphi^{-1}_{\theta_i}(B_\delta(0)))}
(\sum_{j=1}^m  \frac{t_j}{\delta^{4\tilde\alpha}})^{1/\tilde\alpha}\,dx\\
&\leq&|\Omega|+k\pi\frac{\varepsilon_0^2}{\delta^4}.
\end{eqnarray*}
Hence, \eqref{est:ABC} is established.
Letting $r\to1$ in \eqref{ABC}, we obtain \eqref{fragola} and, in turn, \eqref{app2}. This concludes the proof.
\end{proof}
\begin{proof}[Proof of Proposition~\ref{prop:F_lambdanonvuoto}]
Let us consider a continuous function $\eta:[0,1]\to[0,1]$ such that $\eta([0,\frac13])=0$ and $\eta([\frac23,1])=1$ 
and let us introduce the map $h:D_k\to H^1_0(\Omega)$ as 
\begin{equation}
\label{h}
h(\underline{z}_k)=\eta(|\underline{z}_k|)\,u_{|\underline{z}_k|^2,\sigma(\underline{z}_k)},
\end{equation}
where $\sigma(\underline{z}_k)=\frac{\sum_{i=1}^k |z_i|^2\delta_{\gamma(\theta_i)}}{|\underline{z}_k|^2}$ and $\underline{z}_k=(z_1,\ldots,z_k)=(|z_1| e^{i\theta_1},\ldots,|z_k| e^{i\theta_k})$.
\par
We claim that $h\in\mathcal F_\la$.
Indeed, by means of \eqref{app1}--\eqref{app2} it is immediate to see that $h$ satisfies property \eqref{Flambda}--\emph{(i)}.
Moreover,
\[
d\mu(h(\underline{z}_k))\to |z_1|^2\delta_{\gamma(\theta_1)}+\ldots+|z_k|^2\delta_{\gamma(\theta_k)}\;\;\;\mbox{as $\underline{z}_k\to\partial D_k$},
\]
which in turn this implies that
\[
m\circ h(\underline{z}_k)\to \tilde \Psi_k(\underline{z}_k)\qquad \mbox{as $\underline{z}_k\to \partial D_k$},
\]
where $\tP$ is defined in \eqref{Psitilde}, so that \eqref{Flambda}--\emph{(ii)} is also fulfilled.
Finally, by Lemma \ref{lemma:degree}, we also deduce property 
\eqref{Flambda}--\emph{(iii)}.
\end{proof}
We are now ready to define, for $\lambda\in(8k\pi,8(k+1)\pi)$, the min-max value:
\begin{equation}
\label{def:clambda}
c_\lambda=\inf_{h\in\mathcal{F}_\lambda}\sup_{u\in h(D_k)} J_\lambda(u).
\end{equation}
In view of Proposition \ref{prop:F_lambdanonvuoto}, we have $c_\lambda<+\infty$. 
The following lower bound relies in an essential way on the non-contractibility of $\Omega$
as assumed in \eqref{assumpt:Omega}.
\begin{prop}\label{prop:clambdaboundedfrombelow}
Assume \eqref{assumpt:suppP}--\eqref{assumpt:Omega}.
Let $k\in\N$ and $\lambda\in(8k\pi,8(k+1)\pi)$, then $c_\lambda>-\infty$.
\end{prop}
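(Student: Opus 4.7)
The approach is by contradiction. Suppose $c_\lambda=-\infty$; then for every $L>0$ there exists $h\in\mathcal{F}_\lambda$ with
$$
M := \sup_{\underline{z}_k\in D_k}J_\lambda(h(\underline{z}_k))\le -L.
$$
The plan is to show that, for $L$ sufficiently large, the continuous extension $m\circ h:\overline{D_k}\to\C^k$ omits $\underline{0}_k$. This makes $m\circ h|_{\partial D_k}$ null-homotopic in $\C^k\setminus\{\underline{0}_k\}$ and forces $\deg(m\circ h|_{\partial D_k},\partial D_k,\underline{0}_k)=0$, contradicting condition (iii) in \eqref{Flambda}.

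The first step is to upgrade Lemma~\ref{lemma:beta_i} to a uniform statement: for every $\ee,r>0$ there exists a threshold $M_0=M_0(\ee,r)<0$ such that whenever $u\in H^1_0(\Omega)$ satisfies $J_\lambda(u)\le M_0$, one can find $\ell(u)\in\{1,\ldots,k\}$, points $p_1(u),\ldots,p_{\ell(u)}(u)\in\overline{\Omega}$ and positive weights $\beta_1(u),\ldots,\beta_{\ell(u)}(u)$ with $\sum_i\beta_i(u)=1$ for which $d\mu(u)$ is $\ee$-concentrated on $\bigcup_i B_r(p_i(u))$ with local masses close to $\beta_i(u)$. This is a routine contradiction consequence of Lemma~\ref{lemma:beta_i}: if no such $M_0$ existed, a sequence $u_n$ with $J_\lambda(u_n)\to-\infty$ violating the conclusion could be selected, against Lemma~\ref{lemma:beta_i} along a subsequence. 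Splitting $m_j(u)=\int_\Omega\chi(x)^j\,d\mu(u)$ over the balls $B_r(p_i(u))$ and the complementary set, and using the Lipschitz continuity of $\chi$ on $\overline{\Omega}$, one obtains
$$
\left|\,m_j(u)-\sum_{i=1}^{\ell(u)}\beta_i(u)\chi(p_i(u))^j\,\right|\le C(\ee+r),\qquad j=1,\ldots,k,
$$
with $C$ depending only on $k$ and $\|\chi\|_{C^1(\overline{\Omega})}$.

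The second step is an algebraic-topological lemma. Consider the space of formal barycenters of $\overline{\Omega}$ of order $k$,
$$
\mathcal{B}_k:=\Big\{\sum_{i=1}^k t_i\delta_{q_i}\,:\,t_i\ge 0,\ \textstyle\sum_i t_i=1,\ q_i\in\overline{\Omega}\Big\},
$$
equipped with the weak-$\ast$ topology: it is compact, being a continuous image of a compact subset of $\R^k\times\R^{2k}$. The map $\Theta:\mathcal{B}_k\to\C^k$, $\Theta_j(\sigma):=\int_\Omega\chi^j\,d\sigma$, is continuous. I claim $\Theta$ is never zero on $\mathcal{B}_k$: writing $\sigma=\sum_{i=1}^{\ell'}t_i\delta_{q_i}$ with $t_i>0$ and $\ell'\le k$, if $\Theta(\sigma)=\underline{0}_k$ then in particular $\sum_{i=1}^{\ell'}t_i\chi(q_i)^j=0$ for $j=1,\ldots,\ell'$, so Lemma~\ref{lemma:continuità} forces $\chi(q_i)=0$ for every $i$; this contradicts $|\chi(q_i)|\ge 2\rho>0$, which holds by \eqref{ipotesiOmega}. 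Therefore, by compactness,
$$
\delta_0:=\min_{\sigma\in\mathcal{B}_k}|\Theta(\sigma)|>0.
$$

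To conclude, fix $\ee,r>0$ with $C(\ee+r)<\delta_0/2$ and choose $L$ so that $-L\le M_0(\ee,r)$. For each $\underline{z}_k\in D_k$ apply the first step to $u=h(\underline{z}_k)$ and set $\hat\sigma(\underline{z}_k):=\sum_i\beta_i(u)\delta_{p_i(u)}\in\mathcal{B}_k$; then
$$
|m(h(\underline{z}_k))|\ge|\Theta(\hat\sigma(\underline{z}_k))|-C(\ee+r)\ge\delta_0/2.
$$
This lower bound extends to $\overline{D_k}$ by the continuous extension property \eqref{Flambda}--(ii), so $m\circ h(\overline{D_k})\subset\C^k\setminus\{\underline{0}_k\}$, giving the desired contradiction with \eqref{Flambda}--(iii). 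The main technical obstacle is the uniform version of Lemma~\ref{lemma:beta_i} in the first step; the remainder is a topological argument whose essential ingredient is the non-simple-connectedness of $\Omega$, used through the existence of the radius $\rho>0$ in \eqref{ipotesiOmega}.
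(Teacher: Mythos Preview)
Your argument is correct and reaches the same contradiction as the paper, but the two routes are organized differently. The paper works sequentially: it picks $h_n\in\mathcal F_\lambda$ with $\sup J_\lambda\circ h_n\le -n$, uses the nonzero degree in \eqref{Flambda}--(iii) to select $u_n\in h_n(D_k)$ with $m(u_n)=\underline 0_k$ \emph{exactly}, applies Lemma~\ref{lemma:beta_i} directly to this single sequence (no uniformization needed), and then feeds the resulting Vandermonde system with small right-hand side into Lemma~\ref{lemma:continuità} to force $|\chi(p_i)|$ small, against \eqref{ipotesiOmega}. You invert the order: first prove a uniform concentration statement valid for \emph{every} $u$ with $J_\lambda(u)$ very negative, then combine it with the compactness of the barycenter space and Lemma~\ref{lemma:continuità} to get a positive lower bound $\delta_0$ for $|\Theta|$, and conclude that $m\circ h$ omits $\underline 0_k$ on all of $\overline{D_k}$, contradicting (iii). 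The paper's path is slightly leaner (no uniform step, no barycenter compactness), while yours yields the cleaner geometric statement that $m$ sends low sublevel sets of $J_\lambda$ into $\C^k\setminus\{\underline 0_k\}$.

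One small point to tidy: in your uniform version the weights $\beta_i(u)$ you extract are the actual local masses of $d\mu(u)$, which sum to a value in $(1-\ee,1]$ rather than exactly $1$; so either normalize (absorbing the $O(\ee)$ discrepancy into the constant $C$ in your estimate for $|m_j(u)-\sum_i\beta_i\chi(p_i)^j|$), or enlarge $\mathcal B_k$ to sub-probability barycenters of total mass in $[1/2,1]$, which is still compact and on which $\Theta$ still avoids $\underline 0_k$ by the same Vandermonde argument. Either fix is routine and does not affect the logic.
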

\begin{proof}
The case $k=1$ has been treated in \cite{RiZeJDE}, while the case $k>1$ can be worked out following \cite{BarDem} with minor modifications. 
We prove it for reader's convenience.
\par
We assume by contradiction that for any $n\in\N$ there exists $h_n\in\mathcal{F}_\lambda$ such that 
\[
\sup_{u\in h_n(D_k)}J_\lambda(u)\leq-n.
\]
In view of property \eqref{Flambda}--\emph{(iii)} in the definition of $\mathcal{F}$, 
for any $n\in\N$ we can find $u_n\in h_n(D_k)\subset H^1_0(\Omega)$ such that 
\[
J_\lambda(u_n)\leq-n,\qquad\mbox{and}\qquad m(u_n)=\underline{0}_k.
\]

Next we can apply Lemma \ref{lemma:beta_i} with $r$ and $\varepsilon$ to be chosen later in a convenient way. 
Denoting by $\ell$ the positive integer (less or equal than $k$) found in the above mentioned Lemma, for any $j\in\{1,\ldots,\ell\}$
\begin{eqnarray}\label{mjun}
0=m_j(u_n)&=&\int_\Omega (\chi(x))^jd\mu(u_n)=\int_{\cup_{i=1}^\ell B_r(p_i)}(\chi(x))^jd\mu(u_n)
+\int_{\Omega\setminus\cup_{i=1}^\ell B_r(p_i)}(\chi(x))^jd\mu(u_n)\nonumber\\
&=&\sum_{i=1}^\ell\int_{B_r(p_i)\setminus \cup_{h=1}^{i-1}B_r(p_h)}(\chi(x))^jd\mu(u_n)+\int_{\Omega\setminus \cup_{i=1}^\ell B_r(p_i)} (\chi(x))^jd\mu(u_n)\nonumber\\
&=&\sum_{i=1}^\ell \beta_i (\chi(p_i))^j - R_{j,n}(r)
\end{eqnarray}
where $\beta_i$ and $p_i$ are obtained via Lemma \ref{lemma:beta_i} and $R_{j,n}(r)$, up to a subsequence, can be estimated as follows:
\begin{eqnarray*}
&\abs{R_{j,n}(r)}&=\abs{\sum_{i=1}^\ell\left(\int_{B_r(p_i)\setminus
\cup_{h=1}^{i-1}B_r(p_h)}(\chi(x))^j  d\mu(u_n)-(\chi(p_i))^j\beta_i\right)+
\int_{\Omega\setminus\cup_{i=1}^\ell B_r(p_i)}(\chi(x))^j  d\mu(u_n)}\\
&\stackrel{\eqref{limitbeta_i}+\eqref{condizionelemmabeta_i}}{\leq}&\sum_{i=1}^\ell\int_{B_r(p_i)\setminus
\cup_{h=1}^{i-1}B_r(p_h)}\abs{(\chi(x))^j -(\chi(p_i))^j} d\mu(u_n)+o_n(1)+\varepsilon\\
&\leq&\sum_{i=1}^\ell \int_{B_r(p_i)\setminus\cup_{h=1}^{i-1}B_r(p_h)}\abs{\chi(x)-\chi(p_i)}
\left(\sum_{h=0}^{j-1}\abs{\chi(x)}^{j-1-h}\abs{\chi(p_i)}^{h}\right) d\mu(u_n)+o_n(1)+\varepsilon\\
&\leq&\sum_{i=1}^\ell j \, d^{j-1}\int_{B_r(p_i)\setminus\cup_{h=1}^{i-1}B_r(p_h)}\abs{\chi(x)-\chi(p_i)} d\mu(u_n)+o_n(1)+\varepsilon\\
&\leq&\sum_{i=1}^\ell \ell \, d^{j-1}\,C_\chi\,r\,\beta_i+o_n(1)+\varepsilon=\ell \, d^{j-1}\,C_\chi\,r+o_n(1)+\varepsilon.
\end{eqnarray*}
In the above chain of inequalities $d:=\max\limits_{x\in\Omega}|\chi(x)|$
and $C_\chi=\max\limits_{x_1,x_2\in\Omega}\frac{|\chi(x_1)-\chi(x_2)|}{|x_1-x_2|}$.
Denoting by $z_i:=\chi(p_i)\in\C$, for $i\in\{1,\ldots,\ell\}$, we get, by virtue of \eqref{mjun}, that the $z_i$'s satisfy
\begin{equation}\label{sistemafinale}
\left\{
\begin{array}{l}
\beta_1z_1+\beta_2 z_2+\ldots+\beta_\ell z_\ell =R_{1,n}(r)\\
\beta_1z_1^2+\beta_2 z_2^2+\ldots+\beta_\ell z_\ell^2=R_{2,n}(r)\\
\qquad\qquad\ldots\ldots\\
\beta_1z_1^\ell+\beta_2 z_2^\ell+\ldots+\beta_\ell z_\ell^\ell=R_{\ell,n}(r).
\end{array}
\right.
\end{equation}
By our choice of $\chi$, see \eqref{ipotesiOmega}, there exists $\rho>0$ such that  $\chi(\Omega)\cap B_{2\rho}(0)=\emptyset$, then
\begin{equation}
\label{penultima}
2\rho\leq|\chi(p_i)|=|z_i|.
\end{equation}
On the other hand, by applying Lemma \ref{lemma:continuità} to system \eqref{sistemafinale}, we obtain that there exists $\delta>0$ such that if
\begin{equation}\label{ultima}
|R_{j,n}(r)|\leq \delta\qquad\mbox{for some $n\in\N$ and for any $j\in\{1,\ldots,\ell\}$},
\end{equation}
then $|z_i|\leq\rho$, which would be a contradiction against \eqref{penultima}. Finally, it is immediate to see that choosing $r=\frac{\delta}{2\ell d^{j-1}C_\chi}$, $\varepsilon=\frac{\delta}{2}$ and $n$ sufficiently large condition \eqref{ultima} is fulfilled for any $j\in\{1,\ldots,\ell\}$. The proof is thereby complete.
\end{proof}
Finally, we are able to prove the existence result.
\begin{proof}[Proof of Theorem~\ref{thm:main}]
By the definition \eqref{eq:Nerifunctional} of $J_\lambda$,
it is readily checked that if $\la'\le\la$, then $\mathcal F_{\la'}\subset\mathcal F_\la$
and $J_{\la'}(u)\ge J_\la(u)$ for all $u\in H_0^1(\Omega)$.
Consequently, $c_{\la'}\ge c_\la$, where $c_\la$ is the min-max value defined in \eqref{def:clambda}.
In particular, the mapping $\la\to c_\la$ is monotone, and therefore the derivative $c'_\la$
exists for almost every $\la\in(8k\pi,8(k+1)\pi)$.
We fix $\la\in(8k\pi,8(k+1)\pi)$ such that $c'_\la$ is well-defined.
By the well-known Struwe Monotonicity Trick \cite{Struwe}, a bounded Palais-Smale sequence,
whose bounds depend on $|c'_\la|$,
may be constructed at level $c_\la$.
The details of this construction in the context of mean field equations may be found in 
\cite{StTa}, see also \cite{RiZeJDE} for the specific context of \ref{eq:Neribis}.
By compactness of the Moser-Trudinger embedding, we obtain from the bounded Palais-Smale sequence
a solution to \ref{eq:Neribis}.
In this way, we obtain a solution to \ref{eq:Neribis} for almost every $\la\in(8k\pi,8(k+1)\pi)$.
Now we fix $\la_0\in(8k\pi,8(k+1)\pi)$. Let $\la_n\to\la_0$ be such that $(\ast)_{\la_n}$
admits a solution $u_n$ for all $n$. In view of Theorem~\ref{thm:mq}, we conclude that
the sequence $u_n$ is compact, and consequently there exists a solution $u_0$
to $(\ast)_{\la_0}$ such that $u_n\to u_0$.
In particular, we obtain a solution for $(\ast)_{\la_0}$.
We conclude that solutions to \ref{eq:Neribis} exist for all values $\la\in(8k\pi,8(k+1)\pi)$,
as asserted.
\end{proof}
%%%%%%%%%%%%%%%%%%%%%%%%%%%%%%%%%%%%%%%%%%%%%%%%%%%%%%%%%%%%%%%%%%%%%%%%%%%%%%%%%%%
%%%%%%%%%%%%%%%%%%%%%%%%%%%%%%%%%%%%%%%%%%%%%%%%%%%%%%%%%%%%%%%%%%%%%%%%%%%%%%%%%%%
\section{Appendix: Liouville bubble limit profiles}
\label{sec:appendix}
%%%%%%%%%%%%%%%%%%%%%%%%%%%%%%%%%%%%%%%%%%%%%%%%%%%%%%%%%%%%%%%%%%%%%%%%%%%%%%%%%%%
%%%%%%%%%%%%%%%%%%%%%%%%%%%%%%%%%%%%%%%%%%%%%%%%%%%%%%%%%%%%%%%%%%%%%%%%%%%%%%%%%%%
In view of the mass quantization property, as stated in Theorem~\ref{thm:mq},
it is natural to expect that, upon rescaling, a concentrating sequence $\un$ of solutions to $(*)_{\lambda_n}$ should 
yield a \emph{Liouville bubble} profile, 
namely a profile of the form
\begin{equation}
\label{def:bubble}
U_{\de,\xi}(x)=\ln\frac{8\de^2}{(\de^2+|x-\xi|^2)^2},
\qquad \de>0,\ \xi\in\rr^2.
\end{equation}
This is indeed the case, as we show in this Appendix. However, it turns out
that the usual rescaling yields the desired profile only in 
the \lq\lq non-degenerate" case where $\calP(\{1\})>0$. On the other hand, if
$\calP(\{1\})=0$, such a rescaling yields a trivial profile in the limit, and some extra care is needed in order to 
capture the Liouville bubble profile.
\par
More precisely, let $\un$ be a concentrating sequence of solutions to $(*)_{\lambda_n}$.
It is convenient to set
\[
\In=\iint_{[0,1]\times\Omega}e^{\al'\un}\,\mathcal P(d\al')dx.
\]
We recall that by the maximum principle $\In\ge|\Omega|$, and along a concentrating sequence we have
$\In\to+\infty$.
Then, problem~$(*)_{\lambda_n}$ takes the form
\begin{equation*}
\label{eq:n}
\left\{\begin{aligned}
-\Delta\un=&\frac{\lan}{\In}\int_{[0,1]}\al e^{\al\un}\,\mathcal P(d\al)&&\hbox{in}\ \Omega\\
\un=&0&&\hbox{on}\ \partial\Omega.
\end{aligned}\right.
\end{equation*}
We assume that $\lan\to\la_0$ and that
\begin{equation*}
\label{eq:maxu}
\un(\xn)=\max_\Omega\un\to+\infty
\qquad\mbox{as\ }n\to\infty.
\end{equation*} 
In view of \cite{GNN} (see the proof of Lemma~\ref{lem:BM}), we know that $\xn$
stays well-away from $\partial\Omega$.
%%%%%%%%%%%%%%%%%%%%%%%%%%%%%%%%%%%%%%%%%%%%%%%%%%%%%%%%%%%%%%%%%%%%%%%%%%%%%%%%%%%%%%%%%%%%%
\subsection{The \lq\lq non-degenerate case" $\boldsymbol{\calP(\{1\})>0}$}
%%%%%%%%%%%%%%%%%%%%%%%%%%%%%%%%%%%%%%%%%%%%%%%%%%%%%%%%%%%%%%%%%%%%%%%%%%%%%%%%%%%%%%%%%%%%%
Throughout this subsection  we assume that 
\[
1\in\supp\calP\qquad \hbox{and}\qquad \calP(\{1\})>0.
\]
We define
\[
\wn(x):=\un(x)-\ln\In.
\]
Then, $\wn(\xn)=\max\wn$ and $\wn$ satisfies
\[
-\Delta\wn=\frac{\lan}{\In}\int_{\I}\al e^{\al\un}\,\Pda
=\lan\calP(\{1\})e^{\wn}+\rn,
\]
where
\[
\rn(x):=\frac{\lan}{\In}\int_{[0,1)}\al e^{\al\un}\,\Pda.
\]
Moreover,
\begin{equation*}
\begin{aligned}
\int_\Omega e^{\wn}\,dx=&\int_\Omega e^{\un-\ln\In}\,dx
=\int_\Omega\frac{e^{\un}\,dx}{\iint_{[0,1]\times\Omega}e^{\al'\un}\,\mathcal P(d\al')dx}\\
=&\calP(\{1\})^{-1}\frac{\iint_{\{1\}\times\Omega}e^{\al'\un}\,\mathcal P(d\al')dx}{\iint_{[0,1]\times\Omega}e^{\al'\un}\,\mathcal P(d\al')dx}\\
\le&\calP(\{1\})^{-1}.
\end{aligned}
\end{equation*}
In order to rescale, we set:
\begin{equation*}
\sn:=e^{-\wn(\xn)/2}
\qquad
\tOn:=\frac{\Omega-\xn}{\sn}
\end{equation*}
and
\[
\twn(y):=\wn(\xn+\sn y)+2\ln\sn,\qquad y\in\tOn.
\]
%We note that in view of \eqref{eq:maxu} we also have $\wn(\xn)=\max\wn$.
Since $\xn$ stays well-away from the boundary of $\Omega$, the rescaled domain $\tOn$ invades the whole space $\rr^2$.
\par
Then, $\twn$ satisfies
\begin{equation*}
\begin{cases}
-\Delta\twn=\lan\calP(\{1\})e^{\twn}+\lan\trn
&\mbox{in\ }\tOn\\
\twn(y)\le\twn(0)=0\\
\int_{\tOn}e^{\twn}=\int_\Omega e^{\wn}\le\calP(\{1\})^{-1}
\end{cases}
\end{equation*}
where
\[
\trn(y):=\frac{\sn^2}{\In}\int_{[0,1)}\al e^{\al\un(\xn+\sn y)}\,\Pda.
\]
We claim that
\begin{equation}
\label{eq:limtrn}
\|\trn\|_{L^\infty(\tOn)}\to0 \mbox{ as }n\to\infty.
\end{equation}
Indeed, given $\eps>0$, let $\eta>0$ be sufficiently small so that
$\calP([1-\eta,1))<\eps$.
Let $n_0\in\mathbb N$ be sufficiently large so that $(\sn^2/\In)^\eta<\eps$
for all $n\ge n_0$.
We estimate, for all $n\ge n_0$:
\begin{equation*}
\begin{aligned}
\trn(y)=&\int_{[0,1)}\al e^{\al\twn(y)}\left(\frac{\sn^2}{\In}\right)^{1-\al}\,\Pda
\stackrel{\twn(y)\le0}{\le}\int_{[0,1)}\left(\frac{\sn^2}{\In}\right)^{1-\al}\,\Pda\\
=&\int_{[0,1-\eta)}\left(\frac{\sn^2}{\In}\right)^{1-\al}\,\Pda
+\int_{[1-\eta,1)}\left(\frac{\sn^2}{\In}\right)^{1-\al}\,\Pda\\
\le&\left(\frac{\sn^2}{\In}\right)^{\eta}+\calP([1-\eta,1))
<2\eps.
\end{aligned}
\end{equation*}
Hence, \eqref{eq:limtrn} is established.
\par
We conclude that there exists a solution $\tw\in C^2_{\mathrm{loc}}(\rr^2)$ 
to the problem
\begin{equation*}
\begin{cases}
-\Delta\tw=\la_0\calP(\{1\})e^{\tw}
&\mbox{in\ }\rr^2\\
\tw(y)\le\tw(0)=0\\
\int_{\rr^2}e^{\tw}\le\calP(\{1\})^{-1}
\end{cases}
\end{equation*}
such that, up to subsequences,
$\twn\to\tw$ in $C^2_{\mathrm{loc}}(\rr^2)$.
In view of Chen-Li's classification result \cite{ChenLi}, the function $\tw+\log(\la_0\calP(\{1\}))$
is of the form \eqref{def:bubble}.
The asserted limit profile is thus established in the case $\calP(\{1\})>0$.
%%%%%%%%%%%%%%%%%%%%%%%%%%%%%%%%%%%%%%%%%%%%%%%%%%%%%%%%%%%%%%%%%%%%%%%%%%%%
\subsection{The \lq\lq degenerate case" $\boldsymbol{\calP(\{1\})=0}$}
%%%%%%%%%%%%%%%%%%%%%%%%%%%%%%%%%%%%%%%%%%%%%%%%%%%%%%%%%%%%%%%%%%%%%%%%%%%%
Throughout this section, we assume
\begin{equation}
\label{assumpt:degP}
1\in\supp\calP\qquad\hbox{and}\qquad\calP(\{1\})=0.
\end{equation}
We show the following.
\begin{prop}
\label{prop:bubble}
Assume \eqref{assumpt:degP}.
There exist a rescaling of $\un$ of the form
\[
\twn(y)=\an\un(\xn+\sn y)-\ln\In,
\qquad y\in\tOn=\frac{\Om-\xn}{\sn}
\]
where, as $n\to\infty$, $\an\to1$, $\sn^2=e^{-\an\un(\xn)+\ln\In}\to0$,
and a solution $\tw$ to the problem
\begin{equation*}
\begin{cases}
-\Delta\tw=\la_0 e^{\tw}&\hbox{in\ }\rr^2\\
\int_{\rr^2} e^{\tw}\,dx<+\infty\\
\tw(y)\le\tw(0)=0,
\end{cases}
\end{equation*}
such that, up to subsequences, $\twn\to\tw$ in $C^2_{\mathrm{loc}}(\rr^2)$. 
In particular, $\tw+\log\la_0$ is of the desired form \eqref{def:bubble}.
\end{prop}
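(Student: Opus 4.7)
The plan is to adapt the non-degenerate rescaling of the previous subsection by absorbing the degeneracy $\calP(\{1\})=0$ into an auxiliary exponent $\an\to 1^-$ inserted into the exponential.

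\emph{Step 1 (choice of scaling).} Let $M_n:=\un(\xn)\to\infty$ and $F(t):=\int_{[0,1]}e^{\alpha t}\,\calP(d\alpha)$. I define $\an\in(0,1)$ by
\[
e^{\an M_n}:=F(M_n),\qquad\hbox{equivalently}\quad \an=\frac{\log F(M_n)}{M_n}.
\]
Since $1\in\supp\calP$ and $\calP(\{1\})=0$, elementary Laplace-type asymptotics give $\an\to 1^-$ and $\an<1$. Set $\sn^2:=\In/F(M_n)$ and
\[
\twn(y):=\an\un(\xn+\sn y)-\ln\In+2\ln\sn,\qquad y\in\tOn,
\]
so that $\twn(0)=0$ and $\twn\le 0$ by the maximum principle. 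Substitution in $(*)_{\lan}$ (using $\alpha\un(\xn+\sn y)=(\alpha/\an)[\twn(y)+\ln\In-2\ln\sn]$) yields, on $\tOn$,
\[
-\Delta\twn(y)=\an\lan\int_{[0,1]}\alpha\,e^{(\alpha-\an)M_n}\,e^{(\alpha/\an)\twn(y)}\,\calP(d\alpha).
\]

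\emph{Step 2 ($\sn\to 0$ and $\tOn\to\R^2$).} Boundary blow-up is excluded by the moving-plane argument of \cite{GNN} used in Lemma~\ref{lem:BM}, so $\xn$ stays away from $\partial\Om$. Writing
\[
\sn^2=\int_\Om\frac{1}{F(M_n)}\int_{[0,1]}e^{\alpha\un(x)}\,\calP(d\alpha)\,dx,
\]
the integrand is bounded by $1$ and, by Lemma~\ref{lem:BM}, $\un$ is locally bounded on $\Om\setminus\Ss$ while $F(M_n)\to\infty$; hence dominated convergence yields $\sn^2\to 0$, and $\tOn$ invades $\R^2$.

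\emph{Step 3 ($\alpha$-measure).} Set $\nu_n(d\alpha):=\alpha\,e^{(\alpha-\an)M_n}\,\calP(d\alpha)$. Its total mass equals $F'(M_n)/F(M_n)$, i.e.\ the first moment of the probability $e^{\alpha M_n}\calP(d\alpha)/F(M_n)$, which concentrates at $\alpha=1$; hence $\nu_n([0,1])\to 1$. Moreover, for every $\eta>0$,
\[
\nu_n([0,1-\eta])\le(1-\eta)\,\calP([0,1-\eta])\,e^{(1-\eta-\an)M_n}\to 0,
\]
giving $\nu_n\stackrel{*}{\rightharpoonup}\delta_1$ on $[0,1]$.

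\emph{Step 4 (passage to the limit).} Since $\twn\le 0$, the right-hand side of the $\twn$-equation is bounded by $\an\lan\nu_n([0,1])\le C$ on $\R^2$. Standard elliptic regularity gives $C^{1,\gamma}_{\mathrm{loc}}$-precompactness of $\twn$; after bootstrap, $\twn\to\tw$ in $C^2_{\mathrm{loc}}(\R^2)$ along a subsequence. Since $\alpha/\an\to 1$ uniformly on the effective support $[1-o(1),1]$ of $\nu_n$, Step~3 shows that the right-hand side tends to $\lambda_0 e^{\tw(y)}$, so $-\Delta\tw=\lambda_0 e^{\tw}$ in $\R^2$. Finiteness of $\int_{\R^2}e^{\tw}$ follows from Fatou together with the uniform flux bound $\int_{\tOn}(-\Delta\twn)\,dy\le\an\lan$, which in the limit reads $\lambda_0\int_{\R^2}e^{\tw}\le\lambda_0$. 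Chen--Li's classification~\cite{ChenLi} then identifies $\tw+\log\lambda_0$ with a Liouville bubble of the form~\eqref{def:bubble}. The hardest point is Step~1: the implicit choice of $\an$ must be tuned to $\calP$ so that $\sn\to 0$ \emph{and} the $\alpha$-measure $\nu_n$ retains nontrivial mass concentrating at $\alpha=1$ — precisely the delicate balance forced by $\calP(\{1\})=0$.
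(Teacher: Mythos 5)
Your proof is correct and follows essentially the same route as the paper: a $\calP$-dependent exponent $\an\to 1$ defined implicitly at the maximum point, the rescaling $\sn^2=\In e^{-\an\un(\xn)}$, uniform bounds on the rescaled right-hand side, and Chen--Li's classification. The only differences are cosmetic --- you normalize via $e^{\an\un(\xn)}=\int_I e^{\al\un(\xn)}\,\calP(d\al)$ without the weight $\al$ that the paper includes, and you track the $\al$-marginal measure $\nu_n\stackrel{\ast}{\rightharpoonup}\de_1$ explicitly where the paper packages the same information into the potential $\Vn$ --- and your Steps 2--4 in fact spell out the convergence of the nonlinearity to $\la_0 e^{\tw}$ in slightly more detail than the paper does.
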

%Similarly as in the proof of the mass quantization, as stated in Theorem~\ref{thm:mq},
%the \lq\lq non-degenerate" case $\calP(\{1\})>0$ is rather straightforward, whereas the \lq\lq degenerate" case
%$\calP(\{1\})=0$ is more delicate, and requires a choice of $\an$ depending on $\calP$.
%\par
%Therefore, in what follows, we first establish Proposition~\ref{prop:bubble} in the case $\calP(\{1\})>0$.
%Then, we define the appropriate choice of $\an$ in the case $\calP(\{1\})=0$
%and we correspondingly establish some necessary estimates.
%Finally, we conclude the proof of Proposition~\ref{prop:bubble}.
%\par
%\par
%%%%%%%%%%%%%%%%%%%%%%%%%%%%%%%%%%%%%%%%%%%%%%%%%%%%%%%%%%%%%%%%%%%%%%%%%%%%%%%%%%%%%%%%%%%%%%%%
%\subsection{The \emph{non-degenerate} case $\boldsymbol{\calP(\{1\})>0}$}
%%%%%%%%%%%%%%%%%%%%%%%%%%%%%%%%%%%%%%%%%%%%%%%%%%%%%%%%%%%%%%%%%%%%%%%%%%%%%%%%%%%%%%%%%%%%%%%%
%%%%%%%%%%%%%%%%%%%%%%%%%%%%%%%%%%%%%%%%%%%%%%%%%%%%%%%%%%%%%%%%%%%%%%%%%%%%%%%%%%%%%%%%%%%
%\subsection{The \emph{degenerate} case $\boldsymbol{\calP(\{1\})=0}$}
%%%%%%%%%%%%%%%%%%%%%%%%%%%%%%%%%%%%%%%%%%%%%%%%%%%%%%%%%%%%%%%%%%%%%%%%%%%%%%%%%%%%%%%%%%%
We define $\an\in[0,1]$ and the functions $\wn,\Vn$ by setting:
\begin{equation*}
\label{def:an}
\begin{aligned}
&e^{\an\un(\xn)}:=\int_{[0,1]}\al e^{\al\un(\xn)}\,\calP(d\al)\\
&\wn:=\an\un-\ln\In\\
&\Vn:=\frac{\an\lan}{\In}\int_{[0,1]}\al e^{\al\un}\,\calP(d\al)\,e^{-\wn}.
\end{aligned}
\end{equation*}
With this notation, we have:
%\begin{prop}
%Assume \eqref{assumpt:degP}.
%\end{prop}
\begin{lemma}
\label{lem:an}
Assume \eqref{assumpt:degP}.
The following facts hold true:
\begin{enumerate}
  \item [(i)]
$\an\to1$.
  \item[(ii)] $\Vn(\xn)=\an\lan\to\la_0$
  \item[(iii)] $\|\Vn\|_{L^\infty(\Omega)}\le\an\lan(\int_{[0,1]}\al\,\calP(d\al)+1)$
  \item[(iv)] $\int_\Omega\Vn e^{\wn}\,dx\le\an\lan$. 
\end{enumerate}
\end{lemma}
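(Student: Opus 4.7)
The plan is to verify the four claims by direct computation, relying on two algebraic identities that are baked into the definitions: first, $e^{-\wn(x)}=\In e^{-\an\un(x)}$; and second, at the concentration point,
\[
\int_\I \al e^{\al\un(\xn)}\,\Pda = e^{\an\un(\xn)},
\]
which is precisely how $\an$ is defined. With these, (ii), (iii) and (iv) reduce to essentially trivial manipulations. The only step that requires genuine argument is (i), the asymptotics $\an\to1$, and this is where I expect the (modest) main obstacle to lie: it must be extracted from the blow-up $\un(\xn)\to+\infty$ together with the minimal information provided by \eqref{assumpt:suppP}, namely that $1\in\supp\calP$ (note that \eqref{assumpt:degP} rules out the atom at $1$, so one cannot simply keep the $\al=1$ term).

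For (i), set $M_n:=\un(\xn)\to+\infty$ and take logarithms in the defining relation $e^{\an M_n}=\int_\I \al e^{\al M_n}\,\Pda$, yielding
\[
\an M_n = \log\int_\I \al e^{\al M_n}\,\Pda.
\]
The upper bound $\an\le1$ is immediate, since $\al\le1$ on $\supp\calP$ gives $\int_\I \al e^{\al M_n}\,\Pda\le e^{M_n}$. For the lower bound, fix $\eta\in(0,1)$. Because $1\in\supp\calP$, we have $\calP((1-\eta,1])>0$, hence
\[
\int_\I \al e^{\al M_n}\,\Pda \ge (1-\eta)\,\calP((1-\eta,1])\,e^{(1-\eta)M_n}.
\]
Dividing through by $M_n$ and letting $n\to\infty$ gives $\liminf_n\an\ge 1-\eta$; letting $\eta\to0^+$ yields $\an\to1$.

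For (ii), plugging $x=\xn$ into the definition of $\Vn$ and combining the two identities above yields
\[
\Vn(\xn) = \frac{\an\lan}{\In}\cdot e^{\an\un(\xn)}\cdot \In\, e^{-\an\un(\xn)} = \an\lan \longrightarrow \la_0.
\]
For (iii), I rewrite, using identity (a) inside the integral,
\[
\Vn(x) = \an\lan\int_\I \al\, e^{(\al-\an)\un(x)}\,\Pda,
\]
and split the integral at $\al=\an$. On $[0,\an]$ the exponential is at most $1$ since $\un\ge0$ by the maximum principle, contributing at most $\int_\I \al\,\Pda$. On $(\an,1]$, since $\un(x)\le\un(\xn)$, I bound the exponential by $e^{(\al-\an)\un(\xn)}$; the resulting integral equals
\[
e^{-\an\un(\xn)}\int_{(\an,1]}\al\, e^{\al\un(\xn)}\,\Pda \le e^{-\an\un(\xn)}\int_\I \al\, e^{\al\un(\xn)}\,\Pda = 1
\]
by the defining identity for $\an$. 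Finally (iv) follows from Fubini and $\al\le 1$:
\[
\int_\Om \Vn e^{\wn}\,dx = \frac{\an\lan}{\In}\iint_{\I\times\Om}\al\, e^{\al\un}\,\Pda dx \le \an\lan.
\]
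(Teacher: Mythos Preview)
Your proof is correct and follows essentially the same route as the paper: (ii), (iii), (iv) are done by the identical manipulations (in particular the split at $\al=\an$ in (iii) and the use of the defining relation $\int_\I\al e^{(\al-\an)\un(\xn)}\,\Pda=1$). For (i) the paper phrases the argument as the convergence $\|\al e^{\al}\|_{L^{\un(\xn)}(I,\calP)}\to\|\al e^{\al}\|_{L^{\infty}(I,\calP)}=e$, while you write out the underlying $\eta$-argument directly; the two are the same idea, and your version sidesteps a small notational inaccuracy in that identification.
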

\begin{proof}
Proof of (i).
There holds:
\begin{equation*}
e^{\an}=\left(\int_{[0,1]}\al e^{\al\un(\xn)}\,\calP(d\al)\right)^{1/\un(\xn)}
=\|\al e^{\al}\|_{L^{\un(\xn)}(I,\calP)}\to\|\al e^{\al}\|_{L^{\infty}(I,\calP)}=e.
\end{equation*}
Hence, $\an\to1$.
\par
Proof of (ii). 
By definition of $\an$ and $\wn$, we have
\begin{equation*}
\Vn(\xn)=\frac{\an\lan}{\In}\int_{[0,1]}\al e^{\al\un(\xn)}\,\Pda\,e^{-\wn(\xn)}
=\frac{\an\lan}{\In}e^{\an\un(\xn)}e^{-\wn(\xn)}=\an\lan.
\end{equation*}
\par
Proof of (iii).
By definition, we have
\begin{equation*}
\Vn=\an\lan\int_{\I}\al e^{(\al-\an)\un}\,\Pda.
\end{equation*}
Since $\un\ge0$, we estimate:
\begin{equation*}
\begin{aligned}
\int_{\I}\al e^{(\al-\an)\un}\,\Pda
=&\int_{\al<\an}\al e^{(\al-\an)\un}\,\Pda+\int_{\al\ge\an}\al e^{(\al-\an)\un}\,\Pda\\
\le&\int_{\al<\an}\al\,\Pda+\int_{\al\ge\an}\al e^{(\al-\an)\un(\xn)}\,\Pda\\
\le&\int_{\I}\al\,\Pda+\int_{\I}\al e^{(\al-\an)\un(\xn)}\,\Pda\\
=&\int_{\I}\al\,\Pda+1.
\end{aligned}
\end{equation*}
The asserted estimate follows.
\par
Proof of (iv). 
By definition of $\In$, $\Vn$, we have
\begin{equation*}
\int_\Omega\Vn e^{\wn}=\frac{\an\lan}{\In}\iint_{I\times\Om}\al e^{\al\un}\,\Pda\le\an\lan.
\end{equation*}
The asserted estimates are established.
\end{proof}
Now, we can prove Proposition~\ref{prop:bubble}.
\begin{proof}[Proof of Proposition~\ref{prop:bubble}]
We define the rescaling:
\[
\sn:=e^{\wn(\xn)/2},
\qquad
\twn(y):=\wn(\xn+\sn y)+2\ln\sn,
\qquad
\tVn(y)=\Vn(\xn+\sn y).
\]
The function $\twn$ satisfies
\begin{equation*}
\begin{cases}
-\Delta\twn=\tVn e^{\twn}&\mbox{in }\tOn\\
\int_{\tOn}\tVn e^{\twn}\le C\\
\twn(y)\le\twn(0)=0\\
\|\tVn\|_{L^\infty(\tOn)}\le C,
\end{cases}
\end{equation*}
where, as above, $\tOn$ invades the whole space $\rr^2$.
In view of the estimates in Lemma~\ref{lem:an},
there exists a solution $\widetilde w$ to the problem
\begin{equation*}
\begin{cases}
-\Delta\tw=\la_0 e^{\tw}\\
\int_{\rr^2}e^{\tw}<+\infty\\
\tw(y)\le\tw(0)=0.
\end{cases}
\end{equation*}
such that a subsequence, still denoted $\twn$, satisfies $\twn\to\widetilde w$
locally uniformly on $\rr^2$.
In view of the classification in \cite{ChenLi}, the function $\tw+\ln\la_0$
is of the form \eqref{def:bubble}.
\par
Hence, Proposition~\ref{prop:bubble} is established.
\end{proof}
In view of Proposition~\ref{prop:bubble}, we expect that concentrating solutions to \ref{eq:Neribis}
may be constructed by the approach in \cite{EGP}.
%%%%%%%%%%%%%%%%%%%%%%%%%%%%%%%%%%%%%%%%%%%%%%%%%%%%%%%%%%%%%%%%%%%%%%%%%%%%%%%%%%%
%%%%%%%%%%%%%%%%%%%%%%%%%%%%%%%%%%%%%%%%%%%%%%%%%%%%%%%%%%%%%%%%%%%%%%%%%%%%%%%%%%%
\section*{Acknowledgements}
%%%%%%%%%%%%%%%%%%%%%%%%%%%%%%%%%%%%%%%%%%%%%%%%%%%%%%%%%%%%%%%%%%%%%%%%%%%%%%%%%%%
%%%%%%%%%%%%%%%%%%%%%%%%%%%%%%%%%%%%%%%%%%%%%%%%%%%%%%%%%%%%%%%%%%%%%%%%%%%%%%%%%%%
This research is partially supported by the following grants:  
PRIN $201274$FYK7$\_005$; Progetto GNAMPA-INDAM 2015 \emph{Alcuni aspetti di equazioni ellittiche non lineari}; 
Sapienza Funds \emph{Avvio alla ricerca 2015}.
 %%%%%%%%%%%%%%%%%%%%%%%%%%%%%%%%%%%%%%%%%%%%%%%%%%%%%%%%%%%%%%%%%%%%%%%%%%%%%%%%%%%
%%%%%%%%%%%%%%%%%%%%%%%%%%%%%%%%%%%%%%%%%%%%%%%%%%%%%%%%%%%%%%%%%%%%%%%%%%%%%%%%%%%

\end{document}